\def\D{\mathbf{D}}
\def\E{\mathbb{E}}
\def\P{\mathbb{P}}
\def\W{\mathbf{W}}
\def \R{\mathbb{R}}
\def\TV{{\mathrm TV}}
\def\mn{{\mathrm{min}}}
\def \A{\mathcal{A}}
\def\Q{\bar \CQ}
\def\CG{\mathcal{G}}
\def\eps{\varepsilon}
\newcommand{\tE}{{\tilde{\mathbb{E}}}} 
\newcommand{\tprob}{{\tilde{\mathbb{P}}}}
\newcommand{\tp}{\mathcscr{E}(x,\omega)} 
\newcommand{\K}{\mathcscr{K}} 
\newcommand{\bth}{B_t}
\newtheorem{assumption}[lemma]{Assumption}
\def\CWt{\widetilde\CW}
\def\I{\CI}
\def\noise{B}
\def\y{y}
\def\z{z}
\def\hz{Z}
\definecolor{darkred}{rgb}{0.9,0.1,0.1}
\begin{document}

\title{Ergodicity of hypoelliptic SDEs driven by fractional Brownian motion}

\author{M. Hairer\inst{1,2}, N.S. Pillai\inst{3}}
\institute{Mathematics Institute, The University of Warwick\\
 \email{M.Hairer@Warwick.ac.uk}
\and
Courant Institute, New York University\\
 \email{mhairer@cims.nyu.edu}
\and
CRiSM, The University of Warwick\\
\email{N.Pillai@Warwick.ac.uk}}
\titleindent=0.55cm

\maketitle

\begin{abstract}
We demonstrate that stochastic differential equations (SDEs) driven by fractional
Brownian motion with Hurst parameter $H > {1\over 2}$ have similar ergodic properties
as SDEs driven by standard Brownian motion. The focus in this article is on hypoelliptic
systems satisfying H\"ormander's condition. We show that such systems enjoy a suitable version
of the strong Feller property and we conclude that under a standard controllability condition
they admit a unique stationary solution that is physical in the sense that it does not 
``look into the future''.

The main technical result required for the analysis is a bound on the moments 
of the inverse of the Malliavin covariance matrix, conditional on the past of the driving noise.
\end{abstract}

\renewcommand\abstractname{R\'esum\'e}
\vspace{1em}
\begin{abstract}
Nous d\'emontrons que les \'equations diff\'erentielles stochastiques (EDS) conduites
par des mouvements Browniens fractionnaires \`a param\`etre de Hurst $H > {1\over 2}$
ont des propri\'et\'es ergodiques similaires aux EDS usuelles conduites par des 
mouvements Browniens.  L'int\'er\^et principal du pr\'esent article est de pouvoir tra\^\i ter \'egalement
des syst\`emes hypoelliptiques satisfaisant la condition de H\"ormander. Nous montrons
qu'une version ad\'equate de la propri\'et\'e de Feller forte est v\'erifi\'ee par de tels syst\`emes
et nous en d\'eduisons que, sous une propri\'et\'e de controllabilit\'e usuelle, ils admettent
une unique solution stationnaire qui ait un sens physique.

L'ingr\'edient principal de notre analyse est une borne sup\'erieure sur les moments inverses
de la matrice de Malliavin associ\'ee, conditionn\'ee au pass\'e du bruit.
\\[0.5em]
{\textit{Keywords:} Ergodicity, Fractional Brownian motion, H\"ormander's theorem}\\
{\textit{Subject classification:} 60H10, 60G10, 60H07, 26A33}
\end{abstract}

\section{Introduction}
In this paper we study the ergodic properties of stochastic differential
equations (SDEs) of the type
\begin{equ}[e:SDEmain]
dX_t = V_0(X_t)\,dt + \sum_{i=1}^d V_i(X_t)\circ dB^i_t\;,\quad X_0\in \R^n\;,
\end{equ}
driven by independent fractional Brownian motions (fBm) $B^i$ with fixed Hurst index
$H \in (1/2,1)$. Recall that fBm is the centred Gaussian process with $B^i_0 = 0$ and
\begin{equ}
\E |B^i_t - B^i_s|^2 = |t-s|^{2H}\;.
\end{equ}
Although the basic theory of SDEs driven by fBm (for $H \in (1/4, 1)$)
is now well established (see for example \cite{Nual:06,Mishura,CoutQian:02,Friz}), the ergodicity of such SDEs
has only been studied recently. The main difficulty is that non-overlapping increments of fBm are not independent, so 
that we are dealing with processes that do not have the Markov property.
As a consequence, the traditional ergodic theory for Markov processes as in \cite{MeynTweedie}
does not apply to this situation.

To our knowledge, the first result on the ergodicity of \eref{e:SDEmain} was given in \cite{Hair:05}, where the author
considers the case where the vector fields $V_i$ for $i > 0$ are \textit{constant} and \textit{non-degenerate}
 in the sense
that the corresponding matrix is of rank $n$. In this case, an explicit coupling argument allows to show
that any solution converges to the unique stationary solution at speed bounded above by 
approximately $t^{-1/8}$.
The argument however relies in an essential way on the additivity of the noise.

A more general construction was given in \cite{Hair:Ohas:07,Hair:09}, where the authors built a general
framework of stochastic dynamical systems (SDSs) very similar to that of random dynamical systems (RDSs) 
\cite{Arnold}. In this framework, the notion of an ``invariant measure'' can be defined for an autonomous equation
driven by an ergodic process with stationary but not necessarily independent increments. 
Besides technical questions of continuity, the main difference between the framework of SDSs and that of RDSs is one of viewpoint. An RDS is considered as
a dynamical system on the state space times the space of all `futures' of the driving process, endowed with a cocycle structure. 
An SDS on the other hand is considered as a Markov process on the state space times the space of all `pasts' of the driving
process, endowed with a cocycle structure. This shift of viewpoint has two important advantages when studying the ergodic 
properties of such systems:
\begin{claim}
\item Since the `future' of the driving noise is not part of the augmented phase space, we do not have to deal with `unphysical' 
stationary solutions, where the value of the process at a given time is a function of the future realisation of the driving noise.
\item It allows to build our intuition based on the theory of Markov processes, rather than the theory of dynamical systems.
There, many criteria for the uniqueness of an invariant measure are known to exist.
\end{claim}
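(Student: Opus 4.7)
The two items are framework-level assertions about the SDS construction; proving them rigorously amounts to fixing a precise augmented phase space and checking that invariant probability measures on it give rise to stationary solutions with the advertised properties. The plan is to take as phase space $\CX \times \CW_-$, where $\CW_-$ is a Polish space of realisations of the fBm restricted to $(-\infty,0]$, equipped with the Wiener shift $\theta_t$ and the natural stationary law $\P_-$. One then constructs a Markov semigroup on this enlarged space by
\begin{equ}
\CP_t F(x,w) = \tE_w \bigl[ F\bigl(\Phi_t(x;\noise),\, \theta_t (w \sqcup \noise|_{[0,t]}) \bigr) \bigr]\;,
\end{equ}
where $\Phi_t$ solves \eref{e:SDEmain} driven by a fractional Brownian motion whose past is $w$ and whose future $\noise|_{[0,t]}$ is drawn from the regular conditional law $\tprob_w$ of future increments given the past. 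The crucial point, which has to be checked first, is that this conditional law is a bona fide probability kernel on $\CW_-$-valued paths: this is a Gaussian computation since fBm is Gaussian with explicit covariance, but it does use the long memory in an essential way.

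To justify the first advantage, I would show that whenever $\mu$ is a $\CP_t$-invariant measure on $\CX \times \CW_-$, the associated two-sided stationary process $(X_t)_{t\in\R}$ admits a version such that $X_t$ is $\sigma(\noise_s : s \le t)$-measurable. By construction, $X_0$ is a measurable function of $(x,w)$, hence of the past alone; the evolution between times $0$ and $t$ depends only on the future increments $\noise|_{[0,t]}$, so $X_t$ is measurable with respect to the past up to time $t$; iterating and using stationarity of $\mu$ gives the claim for arbitrary $t$. Any RDS-style invariant measure that would encode a dependence of $X_t$ on $\noise_s$ for $s > t$ simply cannot project to a measure on $\CX \times \CW_-$ in this way, and is thus automatically excluded.

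For the second advantage, the essential input is that $(\CP_t)_{t\ge 0}$ is a genuine Markov semigroup on the Polish space $\CX \times \CW_-$. This follows from the cocycle identity for $\Phi_t$ together with the fact that the regular conditional law of $\noise|_{[t,\infty)}$ given $\CF_t$ depends measurably on $\theta_t w$ only. Once this Markov structure is in place, one gains access to the full Markovian toolkit: Krylov--Bogoliubov for existence of invariant measures, Doob/Khasminskii-type uniqueness via strong Feller plus topological irreducibility, and Harris-type criteria for quantitative convergence. The main obstacle, which is precisely what motivates the rest of the paper, lies in verifying the strong Feller property in this enlarged setting: the transition kernel involves conditional densities of fBm increments and one has to control smoothness jointly in the initial condition $x$ and in the past $w$. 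This in turn reduces to a Malliavin-calculus argument whose crux is the announced bound on inverse moments of the Malliavin covariance matrix conditional on the past of the driving noise.
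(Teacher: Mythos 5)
These two items are not a theorem in the paper; they are motivational remarks in the introduction, and the paper does not supply (or claim to supply) a proof. The precise framework that makes them rigorous is the one recalled in Section~\ref{sec:basicdef}: the augmented Markov process with transition kernel $\CQ_t$ on $\R^n\times\CW$, the kernel $\Q$ and the equivalence relation $\sim$, with the hard work delegated to \cite{Hair:Ohas:07,Hair:09}. Your proposal is essentially a re-derivation of that setup, and the Markov semigroup you write down is the paper's $\CQ_t$; your appeal to Doob--Khasminskii is exactly Theorem~\ref{doob}. So in spirit you are aligned with what the paper (via its references) actually does.

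One point of imprecision worth fixing: for the first item you write that ``$X_0$ is a measurable function of $(x,w)$, hence of the past alone.'' That is not the right formulation: under an invariant measure $\mu$ on $\R^n\times\CW$, the coordinate $X_0=x$ is \emph{not} in general a deterministic function of the past $w$ (indeed the whole point is that $\mu$ may couple $x$ and $w$ nontrivially). The correct statement is that the future noise $\noise|_{[0,t]}$ is drawn from the regular conditional law $\CP(w,\cdot)$, so it is conditionally independent of $x$ given $w$, and consequently $X_t=\Phi_t(x,w,\noise)$ is measurable with respect to $\sigma(x,w,\noise_s : s\le t)$ and does not anticipate $\noise_s$ for $s>t$. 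This conditional-independence formulation, rather than ``$X_0$ is a function of the past,'' is what excludes the anticipating stationary solutions that an RDS-style invariant measure could encode. With that correction, your sketch matches the framework that the paper inherits from \cite{Hair:Ohas:07,Hair:09}.
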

Inspired by the celebrated Doob-Khasminskii theorem (see for example \cite{DPZ}), \cite{Hair:Ohas:07,Hair:09}
introduced a notion of a strong Feller property for SDSs that is a natural
generalisation of the same notion for Markov semigroups. It turns out that for a large class of `quasi-Markovian'
SDSs (this includes SDEs of the type \eref{e:SDEmain}, but also many other examples), uniqueness of the invariant measure
then follows like in the Markovian case from the strong Feller property, combined with some type of topological
irreducibility. To illustrate these results, denote by $A_{t,x}$ the closure of the set of all points that are accessible at time $t$
for solutions to \eref{e:SDEmain} starting at $x$ for some smooth control $B$. Then, one has

\begin{theorem} \label{thm:infexun}
If there exists $t > 0$ such that \eref{e:SDEmain} is strong Feller at time $t$
and such that $\bigcap_{x \in \R^n} A_{t,x} \neq \emptyset$, then \eref{e:SDEmain} can have at most one
invariant measure.
\end{theorem}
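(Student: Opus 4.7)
The strategy is to adapt the classical Doob--Khasminskii uniqueness theorem for Markov semigroups to the SDS framework of \cite{Hair:Ohas:07,Hair:09}. Since the driving noise has stationary but not independent increments, the phase space is augmented by the past of the noise, and the ``invariant measure'' is understood in that sense; the transition operator $\mathcal{P}_t$ acts on this augmented phase space.

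The first step is a reduction: by the ergodic decomposition available for such SDSs, it suffices to show that two distinct ergodic invariant measures $\mu,\nu$ cannot coexist, as any such pair is necessarily mutually singular. The second step exploits the strong Feller property at time $t$: in the SDS sense, this yields continuity in total variation (or in the natural equivalent) of $\xi \mapsto \mathcal{P}_t(\xi,\cdot)$ as $\xi$ varies in the spatial component of the augmented state. The third step uses the accessibility hypothesis: fix $x^\star \in \bigcap_{x\in\R^n} A_{t,x}$. For every $\eps>0$ and every initial condition, a smooth control steers the solution of \eref{e:SDEmain} into the $\eps$-ball around $x^\star$ at time $t$; a support-theorem argument for fBm-driven SDEs (conditional on the past of the noise) then places $x^\star$ in the topological support of the transition law from every augmented state. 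Combined with the strong Feller continuity, this produces an open neighbourhood $U$ of $x^\star$ and a $\delta>0$ such that $\mathcal{P}_t(\xi,U)\ge\delta$ uniformly over $\xi$ in compact sets. The final step is the classical Doob-type argument: by invariance, iterating $\mathcal{P}_t$ forces both $\mu$ and $\nu$ to put positive mass on $U$ and on any set reached from $U$ with positive probability, so $\mu$ and $\nu$ have overlapping supports on which they are mutually absolutely continuous, contradicting their mutual singularity.

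The main obstacle is the non-Markovian nature of the dynamics: because the state at time $t$ depends on the whole past of the driving noise, the strong Feller property that is needed is the \emph{generalised} one developed in \cite{Hair:Ohas:07,Hair:09}, expressing continuity of the transition kernel in the spatial variable while the past-noise component is held fixed (in an appropriate sense). Making the support-theorem step work in this conditional setting---showing that accessibility via smooth controls translates into positivity of the transition law given a prescribed past realisation of the fBm---also requires care, since the noise on $[0,t]$ cannot simply be replaced by a smooth path without destroying its correlation with the past. In practice the theorem is obtained by verifying the hypotheses of the abstract uniqueness criterion proved in \cite{Hair:Ohas:07,Hair:09}, and the outline above is essentially a roadmap for that verification.
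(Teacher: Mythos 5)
Your proposal follows the same route the paper itself takes: Theorem~\ref{thm:infexun} is not proved directly in the text, but is presented as an instance of the abstract Doob--Khasminskii-type uniqueness criterion of \cite{Hair:Ohas:07,Hair:09} (restated here as Theorem~\ref{doob}, cited from \cite[Thm~3.10]{Hair:Ohas:07}), and your outline is a faithful roadmap for how that criterion is verified. Your final sentence acknowledging this is exactly the relation to the paper.

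Two small technical points you should keep in mind when turning the roadmap into a proof. First, the strong Feller property used here is the SDS notion of Definition~\ref{sfellerdefinition}: it gives total-variation continuity of $x\mapsto R_t^*\Q(x,\omega;\cdot)$ for \emph{fixed} $\omega$, not continuity of the Markov kernel $\CQ_t$ jointly in the augmented state $(x,\omega)$. As the paper stresses, $\CQ_t$ is emphatically \emph{not} strong Feller in the classical sense on $\R^n\times\CW$ --- transition probabilities from different pasts $\omega$ remain mutually singular. So your step ``$\mathcal{P}_t(\xi,U)\ge\delta$ uniformly over $\xi$ in compact sets'' needs the compactness argument to run only in the $x$-variable, with the $\omega$-dependence handled by the disintegration $\CP(\omega,\cdot)$ and the fact that both invariant measures share the same marginal on $\CW$ (namely $\P_H$). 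Second, ``uniqueness of the invariant measure'' is taken modulo the equivalence $\sim$ introduced via $\bar\CQ$, so the mutual-singularity-of-ergodic-measures step should be phrased for the pushed-forward laws on path space $\CC(\R_+,\R^n)$ rather than directly on $\R^n\times\CW$; this is exactly why the restriction operator $R_t$ appears in the definition of strong Feller. Your flagging of the conditional support-theorem issue is correct: one must show the conditional law $\CP(\omega,\cdot)$ has full support in $\widetilde\CW$, which follows from the Cameron--Martin structure in Lemma~\ref{disintegrationformula}, so that accessibility via smooth controls does give positive conditional probability of reaching any neighbourhood of $x^\star$.
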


\begin{remark}
Since \eref{e:SDEmain} does not determine a Markov process, it is not clear \textit{a priori} what is the correct 
notion of an invariant measure. The notion retained here is the one introduced in \cite{Hair:Ohas:07,Hair:09}. Essentially,
an invariant measure for \eref{e:SDEmain} is the law of a stationary solution which does not `look into the future'.
Similarly, it is not clear what is the correct notion of a `strong Feller property' for such a system. Precise definitions
can be found in Section~\ref{sec:basicdef} below.
\end{remark}

Furthermore, it was shown in \cite{Hair:Ohas:07} that if \eref{e:SDEmain} is `elliptic' in the sense that the matrix determined by $\{V_i(x)\}_{i>0}$ is 
of full rank at every $x\in \R^n$, then it does indeed satisfy the strong Feller property.
In the case of SDEs driven by white noise, it is known on the other hand that the strong Feller property holds whenever
the vector fields $V_i$ satisfy H\"ormander's bracket condition, namely that the Lie algebra generated by
$\{\d_t + V_0, V_1,\ldots, V_d\}$ spans $\R^{n+1}$ (the additional coordinate corresponds to the $\d_t$ direction) at every point. See also Assumption~\ref{ass:Hormander} below for a precise formulation.

Note at this stage that the strong Feller property of \eref{e:SDEmain} is closely related to the existence of densities
for the laws of its solutions, but cannot be deduced from it directly. The existence of smooth densities under an ellipticity assumption
was shown in \cite{Hu:Nual:07,Nour:Simo:06}, while it was shown under the assumption of H\"ormander's bracket condition in
 \cite{Baud:Hair:07}. In this paper, we are going to address the question of showing that the strong Feller property
 (in the sense given in \cite{Hair:Ohas:07,Hair:09}, see also Definition~\ref{sfellerdefinition} below) holds for \eref{e:SDEmain}
 under H\"ormander's bracket condition.
 
 Our main result is:
 
\begin{theorem}\label{thm:main}
Assume that the vector fields $\{V_i\}$ have bounded derivatives of all orders for $i \ge 0$ and
 furthermore are bounded for $i \ge 1$. Then, if H\"ormander's bracket condition holds at every point, \eref{e:SDEmain} is strong Feller.
In particular, if there exists $t>0$ such that $\bigcap_{x \in \R^n} A_{t,x} \neq \emptyset$ and if there exist constants $M_1, M_2>0$ such that
\begin{equ}[e:dissip]
\scal{x, V_0(x)} \le M_1 - M_2 |x|^2\;,
\end{equ}
then \eref{e:SDEmain} admits exactly one invariant measure.
\end{theorem}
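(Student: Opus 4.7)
The statement splits naturally into two claims: the strong Feller property, and the existence/uniqueness of an invariant measure under the additional assumptions. Uniqueness is immediate from Theorem~\ref{thm:infexun} once the strong Feller property is established, and existence will follow from a Krylov--Bogoliubov type argument using the dissipativity \eref{e:dissip}. So the bulk of the work goes into proving the strong Feller property, and the remaining step for uniqueness is essentially a packaging exercise.

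My plan for strong Feller is to follow the Malliavin-calculus route familiar from the classical Markovian H\"ormander theorem, but carried out in the SDS framework of \cite{Hair:Ohas:07,Hair:09}. The key point is that, while the solution is not Markov, the representation of fBm as a Volterra integral against an underlying Wiener process $W$ lets us regard $X_t$ as a smooth functional of $W$, and lets us decompose $W = (W_-, W_+)$ into its past and its future relative to time $0$. Conditionally on $W_-$, the law of $X_t$ is a functional of a standard Wiener process on $[0,t]$, and one can write an integration-by-parts formula of Bismut type for the conditional expectation $\tE_{W_-}[\phi(X_t)]$. Following the strategy used for the elliptic case in \cite{Hair:Ohas:07}, the gradient estimate needed for strong Feller is reduced to a bound, with sufficiently good tails in $W_-$, on the inverse moments of the \emph{conditional} Malliavin covariance matrix $\mathcal{C}_t$ of $X_t$ given $W_-$, together with Sobolev-type estimates on the Malliavin derivatives of $X_t$.

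The main obstacle is precisely this inverse-moment bound, which is the technical result advertised in the abstract. In the Markovian (standard Brownian) case this is the content of H\"ormander's theorem, where one shows that the Malliavin matrix is non-degenerate by an iterative Norris-lemma argument: if $\scal{\xi, \mathcal{C}_t \xi}$ were small, then so would be certain stochastic integrals involving brackets of the $V_i$'s, and Norris's lemma then forces all iterated brackets evaluated at $X_0$ to be small, contradicting the spanning assumption. The work here is to perform a version of this iteration for fBm with $H > 1/2$, \emph{conditionally on the entire past} $W_-$. The difficulties are twofold: the fBm increments are no longer independent, so one has to carry along the contribution of the past when estimating Young/stochastic integrals, and one needs a Norris-type lemma valid for integrals against fBm whose constants do not explode under the conditioning. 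I would expect to prove such a lemma by combining standard small-ball estimates for fBm with deterministic Young-integral inequalities, and to iterate it through the H\"ormander bracket hierarchy, keeping careful track of the dependence of the constants on $\|W_-\|$ in a suitable norm so that the final bound is integrable over $W_-$.

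For the last part of the theorem, the dissipativity \eref{e:dissip}, combined with boundedness of the $V_i$ for $i\ge 1$, provides a Lyapunov function $|x|^2$ giving a uniform-in-time bound on $\E|X_t|^2$ started from any $x$. Hence the family of empirical measures $\frac{1}{T}\int_0^T \mathcal{L}(X_t, \Theta_t W_-)\,dt$ on the extended phase space of the SDS is tight, and any weak limit point is an invariant measure in the SDS sense of \cite{Hair:Ohas:07,Hair:09}. Combined with the strong Feller property and the accessibility hypothesis $\bigcap_x A_{t,x}\ne\emptyset$, Theorem~\ref{thm:infexun} then yields that this invariant measure is unique.
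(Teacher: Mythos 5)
Your proposal follows essentially the same route as the paper: reduce uniqueness to Theorem~\ref{doob} via the strong Feller property, prove the latter by a Bismut--Elworthy--Li / stochastic-control integration-by-parts argument conditional on the past $\omega$, establish the required inverse-moment bound on the conditional Malliavin matrix via an iterated Norris-lemma argument through the bracket hierarchy, and obtain existence from the dissipativity \eref{e:dissip} (here delegated to \cite[Prop~4.6]{Hair:Ohas:07}). The two technical wrinkles the paper must actually surmount, which you would encounter on the way, are (i) the singularity of the conditional mean's derivative $\dot m_t = \mathcal{O}(t^{H-1})$ at $t=0$, handled by running the Norris iteration on $[\eps^\alpha, T/2]$ and then using the H\"older continuity of $J_{0,\cdot}^{-1}U(X_\cdot)$ near $0$, and (ii) the need to invert not $\hat C_T$ itself but the cut-off matrix $C_T = \A_T h_T \A_T^*$ so that the control vanishes smoothly at the right endpoint and lies in the Cameron--Martin space, together with a priori bounds on the flow and its Jacobian that tolerate an unbounded drift $V_0$.
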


\begin{remark}
Since we would like to satisfy \eref{e:dissip} in order to guarantee the existence of an invariant measure by \cite[Prop~4.6]{Hair:Ohas:07},
we do not impose that $V_0$ is bounded. This creates some technical difficulties in obtaining a priori bounds on
the solutions which are not present in \cite{Baud:Hair:07}.
\end{remark}

\begin{remark}
The notion of an `invariant measure' studied in \cite{Bau:Cou:07} is much more restrictive than
the one studied here, since they considered the situation where the initial condition of the equations is
\textit{independent} of the driving noise, whereas we consider the case where interdependence with 
the past of the driving noise is allowed. In particular, even the case $n=d=1$
with $V_0(x) = -x$ and $V_1(x) = 1$ admits no invariant measure in the sense of \cite{Bau:Cou:07}.

On the other hand, our definition of an invariant measure is more restrictive than that
of a random invariant measure in \cite{Arnold}. Since it generalises the notion of an invariant measure for a Markov process,
this is to be expected. Indeed, there are simple examples of elliptic diffusions on the circle
whose Markov semigroup admits a unique invariant measure, but that admit more than one random invariant measure when
viewed as RDS's. 
\end{remark}

The remainder of the article is structured as follows. In Section~\ref{sec:prelim}, we set up our notations and recall the relevant
results from \cite{Hair:05} and \cite{Hair:Ohas:07}. In Section~\ref{sec:bounds}, we derive the 
necessary moment estimates for
the solutions of the SDE \eref{e:SDEmain}. In Section~\ref{sec:smooth}, we then obtain an invertibility result on the conditioned
Malliavin covariance matrix of the process \eref{e:SDEmain}, provided that  H{\"o}rmander's condition holds. 
Similarly to \cite{Baud:Hair:07}, this allows us to show the smoothness of the laws
of solutions to \eref{e:SDEmain}, conditional on the past of the driving noise. 
In Section~\ref{sec:SF}, we finally show that the strong Feller property is satisfied for \eref{e:SDEmain} under H{\"o}rmander's
condition, thereby concluding the proof of Theorem \ref{thm:main}.
 
\subsection*{Acknowledgements}

{\small
We would like to thank the anonymous referees for carefully reading a preliminary version of this article and 
encouraging us to improve the presentation considerably. NP would like to thank the department of statistics
of Warwick university for support through a CRiSM postdoctoral fellowship, as well as the Courant institute at NYU
where part of this work was completed.
The research of MH was supported by 
an EPSRC Advanced Research Fellowship (grant number EP/D071593/1) and a Royal Society Wolfson Research Merit Award.
}
 
\section{Preliminaries}
\label{sec:prelim}

In this section, we describe the general framework in which we view solutions to \eref{e:SDEmain} and we recall some of the
basic results from \cite{Hair:05,Hair:Ohas:07,Hair:09}.

\subsection{General framework}
\label{sec:basicdef}
Let $\CC^{\infty}_0(\mathbb{R}_-;\R^d)$ be the set of smooth compactly supported functions on $\R_-$
which take the value $0$ at the origin.
 For  $\gamma , \delta \in (0,1)$  define 
 $\CW_{(\gamma,\delta)}$ to be the completion of 
 $\CC^{\infty}_0 (\mathbb{R}_-;\R^d)$ 
 with respect to the norm
\begin{equ}
\label{noisespace}
\|\omega\|_{(\gamma,\delta)} \equiv \sup_{\substack{s,t \in \mathbb{R}_- \\ s \neq t}} {|\omega(t) - \omega(s)|
\over |t-s|^\gamma (1 + |t| + |s|)^\delta}\;.
\end{equ}
We write $\widetilde{\CW}_{(\gamma,\delta)}$ for the corresponding space
containing functions on $\R_+$ instead of $\R_{-}$.
Note that when restricted to a compact time interval, the norm \eref{noisespace} 
is equivalent to the usual H\"older norm with exponent $\gamma$. 
Moreover, $\CW_{(\gamma,\delta)}$ is
a separable Banach space. 

For $H \in (1/2,1)$, $\gamma \in (1/2, H)$ and $\gamma + \delta \in (H,1)$, 
it can be shown that there 
 exists a Borel probability measure $\mathbb{P}_H$ on $\Omega = \CW_{(\gamma,\delta)}\times \widetilde \CW_{(\gamma,\delta)}$
such that the canonical process associated to $\mathbb{P}_H$ is a two-sided
fractional Brownian motion with Hurst parameter $H$ \cite{Hair:05, Hair:Ohas:07}.
We fix such values $\gamma$, $\delta$, and $H$ once and for all and we drop the subscripts $(\gamma,\delta)$ from the
spaces $\CW$ and $\CWt$ from now on.
Note that $\P_H$ is a product measure if and only if $H = 1/2$, but in general we can disintegrate it into a
projected measure (denoted again by $\P_H$) on $\CW$ and regular conditional probabilities $\CP(\omega, \cdot)$ on $\CWt$.
Since $\P_H$ is Gaussian, there is a unique choice of $\CP(\omega, \cdot)$ which is weakly continuous in $\omega$.

We denote by $\phi \colon \R^n \times \CWt \to \CC(\R_+,\R^n)$ the solution map to \eref{e:SDEmain}. It is well-known
\cite{Youn:36,Hu:Nual:07} that if the vector fields $V_i$ have bounded derivatives, then this solution is well-defined in a pathwise sense
(integrals are simply Riemann-Stieltjes integrals) for all times. We also define the shift map $\theta_t \colon \CW \times \CWt \to \CW \times \CWt$ by
identifying elements $(\omega, \tilde \omega) \in \CW \times \CWt$ with the corresponding `concatenated' function $(\omega \sqcup \tilde \omega) \colon \R \to \R^d$ and
setting
\begin{equ}
\theta_t (\omega,\tilde \omega) \sim (\omega \sqcup \tilde\omega)(t+\cdot) - (\omega \sqcup \tilde\omega)(\cdot)\;.
\end{equ}
Denote $\phi_t (x, \tilde{\omega}) = \phi(x,\tilde{\omega})(t)$ and denote by $\Phi_t\colon \R^n \times \CW \times \tilde \CW \to \R^n \times \CW$ the augmented solution map given by
\begin{equ}
\Phi_t(x,\omega,\tilde\omega) = \bigl(\phi_t(x,\tilde \omega), \Pi_{\CW}\theta_t(\omega,\tilde\omega)\bigr)\;,
\end{equ}
where $\Pi_\CW$ is the projection onto the $\CW$-component. For a measurable function $f: \mathcal{X} \mapsto \mathcal{Y}$
between two measurable space $\mathcal{X}, \mathcal{Y}$ and a measure $\mu$ on $\mathcal{X}$ we define
the push forward measure $f^* \mu = \mu \circ f^{-1}$.
With this notation, we can view the solutions to \eref{e:SDEmain} as a Markov process on $\R^n \times \CW$ with transition
probabilities given by
\begin{equ}
\CQ_t(x,\omega;\cdot\,)  = \Phi_t(x,\omega,\cdot\,)^* \CP(\omega,\cdot\,)\;.
\end{equ}
These transition probabilities can actually be shown to be Feller \cite{Hair:Ohas:07}, but they are certainly \textit{not} strong Feller
in the usual sense, since transition probabilities starting from different instances of $\omega$ remain mutually singular for all times. 
Instead, we will use a notion of strong Feller property that is better suited for the particular structure of the problem at hand, see 
Definition~\ref{sfellerdefinition} below. 

However, the question of uniqueness of the invariant measure for \eref{e:SDEmain} should not be interpreted as the question of uniqueness
of the invariant measure for $\CQ_t$. This is because one might imagine that the augmented phase space $\R^n \times \CW$ contains some
`redundant' randomness that is not used to describe the stationary solutions to \eref{e:SDEmain}. (This would be the case for example
if the $V_i$'s are not always linearly independent.) One would like therefore to have a concept of uniqueness for the invariant measure that is independent of the 
particular description of the driving noise.

To this end, we introduce the Markov transition kernel  $\bar \CQ$ from $\R^n \times \CW$ to $\CC(\R_+,\R^n)$ given by
\begin{equ}
\bar \CQ(x,\omega;\cdot\,)  = \phi(x,\cdot)^* \CP(\omega,\cdot)\;.
\end{equ}
This is the conditional law of the solution to \eref{e:SDEmain} given a realisation $\omega$ of the past of the driving noise. 
The action of the Markov transition kernel $\bar \CQ$ on a  measure $\mu$ on $\R^n \times \CW$ is given by
\begin{equ}
\bar \CQ \mu \,(A) = \int_{\R^n \times \CW} \bar \CQ(x,\omega; A) \, \mu(dx , d\omega)\;.
\end{equ}
With this notation, we have a natural equivalence relation between measures on $\R^n \times \CW$ given by
\begin{equ}
\mu \sim \nu \qquad\Leftrightarrow\qquad \bar \CQ \mu = \bar \CQ \nu\;.
\end{equ}
With these definitions at hand, the statement `the invariant measure for \eref{e:SDEmain} is unique' should be
interpreted as `the Markov semigroup $\CQ_t$ has a unique invariant measure, modulo the equivalence relation $\sim$'.

\subsection{Ergodicity of SDEs driven by fBm}
\label{sec:ergodic}

We now summarise some of the relevant results from \cite{Hair:05,Hair:Ohas:07,Hair:09}
giving conditions for the uniqueness of the invariant measure for \eref{e:SDEmain}.
This requires a notion of `strong Feller' property for \eref{e:SDEmain}. We stress again that the
definition given here has \textit{nothing to do} with the strong Feller property of $\CQ_t$. It rather
generalises the notion of the strong Feller property for the Markov process associated to \eref{e:SDEmain}
in the case where the driving noise is white in time.

Let $R_t\colon \CC(\mathbb{R}_+,\CX) \mapsto \CC([t,\infty),\CX)$ 
denote the natural  restriction map and let $\|\cdot\|_\TV$ denote the total variation norm.
We then say that:

\begin{definition} \label{sfellerdefinition}
The solutions to \eref{e:SDEmain} are said to be \textit{strong Feller} at time $t$ if there 
exists a jointly
continuous function $\ell:(\R^n)^2 \times \CW \rightarrow \R_+$ such that
\begin{equ}[eqn:SF]
\|R^*_t\Q(x,\omega;\cdot) - R^*_t\Q(y,\omega;\cdot\,) \|_\TV \le
\ell(x,y,\omega)\;,
\end{equ}
and $\ell(x,x,\omega)=0$ for every $x\in \R^n$ and every $\omega \in
\CW$.
\end{definition}

We also introduce the following notion of irreducibility:

\begin{definition} \label{irreducibility}
The solutions to \eref{e:SDEmain} are said to be \textit{topologically irreducible} at time $t$
if, for every $x \in \R^n$, $\omega \in \CW$ and every non-empty open set 
$U\subset \R^n$, one has $\CQ_t(x,\omega;  U \times \CW ) > 0$.
\end{definition}

The following result from \cite[Thm 3.10]{Hair:Ohas:07} is the main abstract uniqueness
result on which this article is based:
\begin{theorem}\label{doob}
If there exist times $s>0$ and $t>0$ such that the solutions to \eref{e:SDEmain} are strong Feller 
at time $t$ and irreducible at time $s$, then \eref{e:SDEmain}
can have at most one invariant measure.
\end{theorem}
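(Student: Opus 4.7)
The plan is to adapt the classical Doob--Khasminskii uniqueness argument to the present non-Markovian setting. Although the augmented Markov semigroup $\CQ_t$ is not strong Feller on $\R^n \times \CW$ in the usual sense, the total-variation continuity of the restricted kernels $R_t^*\Q(x,\omega;\cdot)$ from Definition~\ref{sfellerdefinition}, combined with the irreducibility from Definition~\ref{irreducibility}, should provide exactly the right ingredients to push the standard Markov proof through, provided one works modulo the equivalence relation $\sim$.

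Suppose for contradiction that two invariant probability measures $\mu_1, \mu_2$ on $\R^n \times \CW$ exist with $\Q\mu_1 \neq \Q\mu_2$. Since $\CQ_t$ is Feller, its set of invariant probability measures forms a Choquet simplex, and decomposing each $\mu_i$ into ergodic components shows that without loss of generality I may assume both $\mu_1$ and $\mu_2$ are ergodic. Because the two path-space measures $\Q\mu_i$ are time-stationary on $\CC(\R_+,\R^n)$ and differ, a Birkhoff-type argument on path space produces disjoint shift-invariant Borel sets $A_1, A_2 \subset \CC(\R_+,\R^n)$ with $\Q\mu_i(A_i) = 1$; in particular $\Q\mu_1 \perp \Q\mu_2$.

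Now I would derive a contradiction. Topological irreducibility at time $s$ forces, via invariance of $\mu_i$ under $\CQ_s$, the $\R^n$-marginal of each $\mu_i$ to have full support in $\R^n$. Invariance of $\mu_i$ under $\CQ_t$ also yields
\begin{equ}
\Q(y,\omega;A_i) = 1 \qquad \text{for $\mu_i$-a.e.\ $(y,\omega)$.}
\end{equ}
Since each $A_i$ is shift-invariant, it is determined by the restriction of the path to $[t,\infty)$, so the strong Feller estimate \eref{eqn:SF} implies that $y \mapsto \Q(y,\omega;A_i)$ is continuous for each fixed $\omega$. Combining this continuity with the fullness of the spatial support of $\mu_i$ yields $\Q(y,\omega;A_i) = 1$ for \emph{all} $y \in \R^n$ and almost every $\omega$, simultaneously for $i = 1$ and $i = 2$. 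Since $A_1 \cap A_2 = \emptyset$, this is impossible.

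The main obstacle is the mismatch between the strong Feller estimate \eref{eqn:SF}, which only compares transition kernels sharing the same past noise $\omega$, and the fact that distinct invariant measures may assign very different weights to different $\omega$-fibres. One therefore needs a careful disintegration in $\omega$: first use $\CQ_s$ together with topological irreducibility to spread the spatial marginals over all of $\R^n$, then apply \eref{eqn:SF} fibrewise in $\omega$, and only at the end integrate out. Handling this interplay between noise memory and state-space dynamics, together with verifying that uniqueness holds modulo the correct equivalence relation $\sim$ rather than on the full augmented space, is the technical core of \cite[Thm~3.10]{Hair:Ohas:07} and the reason the non-Markovian uniqueness theory requires its own development.
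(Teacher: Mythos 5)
The paper does not prove this theorem; it is imported verbatim as \cite[Thm~3.10]{Hair:Ohas:07}. Your sketch is a sensible Doob--Khasminskii outline and you correctly flag that the $\omega$-fibrewise nature of \eref{eqn:SF} is where the difficulty lies, but as written the argument has a genuine gap at precisely that point.

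The problematic step is ``Combining this continuity with the fullness of the spatial support of $\mu_i$ yields $\Q(y,\omega;A_i)=1$ for all $y\in\R^n$ and almost every $\omega$.'' Disintegrate $\mu_i(dy,d\omega)=\pi_i(d\omega)\,\mu_i^\omega(dy)$. The invariance of $\mu_i$ gives $\Q(y,\omega;A_i)=1$ for $\pi_i$-a.e.\ $\omega$ and $\mu_i^\omega$-a.e.\ $y$; to upgrade this to ``all $y$'' by continuity you would need $\operatorname{supp}\mu_i^\omega=\R^n$ for $\pi_i$-a.e.\ $\omega$. Full support of the $\R^n$-\emph{marginal} of $\mu_i$ does not imply this: the conditionals $\mu_i^\omega$ may each be concentrated on a small set that varies with $\omega$, with only the union filling $\R^n$. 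Topological irreducibility in the sense of Definition~\ref{irreducibility} gives you $\CQ_s(x,\omega;U\times\CW)>0$, and hence full marginal support, but it does not give pointwise control of the fibres $\mu_i^\omega$, and the strong Feller estimate \eref{eqn:SF} cannot compensate because it only compares kernels sharing the \emph{same} $\omega$.

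A second, smaller omission: for the final contradiction you need the ``a.e.\ $\omega$'' sets for $i=1$ and $i=2$ to intersect. This is where one should invoke the structural fact, built into the definition of an invariant measure for the SDS in \cite{Hair:Ohas:07,Hair:09}, that \emph{both} $\mu_1$ and $\mu_2$ have $\CW$-marginal equal to $\P_H$; otherwise $\pi_1\perp\pi_2$ is entirely possible and the argument never closes. Even with this in hand, the gap in the previous paragraph remains, and closing it is exactly the quasi-Markovian machinery that \cite[Thm~3.10]{Hair:Ohas:07} is built to supply: rather than deducing full support of the fibres, one shows that strong Feller plus irreducibility force the two families of restricted laws $R_t^*\Q(\cdot,\omega;\cdot)$ to fail to be mutually singular after suitable steering, and deduces equality of $\bar\CQ\mu_1$ and $\bar\CQ\mu_2$ directly rather than via Birkhoff sets on path space.
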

Therefore, in order to prove Theorem~\ref{thm:main}, the only missing ingredient that we need to establish
is the strong Feller property for \eref{e:SDEmain} under H\"ormander's bracket condition. 
 
 \subsection{Notations and definitions}

For $T > 0$ and measurable $f:[0,T] \mapsto \R^n$, set
\begin{equ}
\|f\|_{0,T,\infty} = \sup_{t \in [0,T]} |f(t)|\;, \qquad
\|f\|_{0,T,\gamma} = \sup_{s,t \in [0,T]} \frac{|f(t) - f(s)|}{|t-s|^\gamma}\;.  \label{eqn:Holdnorm}
\end{equ}
For $\alpha \in (0,1)$, we also define the fractional
integration operator $\CI^{\alpha}$ and the 
corresponding fractional differentiation operator
$\CD^\alpha$ by
\begin{equs}[e:frac]
\CI^{\alpha}f(t) &\equiv{1 \over \Gamma(\alpha)}\int_0^t (t-s)^{\alpha-1}f(s)\,ds \;,\\
\CD^{\alpha}f(t) &\equiv{1 \over \Gamma(1-\alpha)}{d \over dt} \int_0^t (t-s)^{-\alpha}f(s)\,ds \;.
\end{equs}

\begin{remark}\label{rem:idinvers}
The operators $\CI^\alpha$ and $\CD^\alpha$ are inverses of each other,
see \cite{Sam:Kil:Mar:93} for a survey of fractional integral operators. 
\end{remark}

The reason why these operators
are crucial for our analysis is that the Markov transition kernel $\CP$ appearing in Section~\ref{sec:basicdef}
is given by translates of the image of Wiener measure under $\CI^{H-1/2}$, see also Lemma~\ref{disintegrationformula} below.

Indeed, by the celebrated Mandelbrot-Van Ness
representation of the fBm \cite{Mand:Vann:68}, we may express the
two-sided fBm $B$
with Hurst parameter $H \in (0,1)$ in terms of a two-sided standard Brownian motion $W$ as
\begin{equ}
\label{repr1}
B_t=\alpha_H \int_{-\infty}^0(-r)^{H-1/2}(dW_{r+t} - dW_r)\;
\end{equ}
for some $\alpha_H > 0$.
The advantage of this representation
is that it is invariant under time-shifts, so that
it is natural for the study of ergodic properties, see \cite{Samo:Taqq:94} for more details.

Define now the operator $\CG\colon \CW \to \CWt$ by:
\begin{equ}[eqn:opera]
\CG\omega(t) \equiv  \gamma_H\int_0^{\infty} {1\over r }g\Big({t\over
r}\Big)\omega(-r)\,dr \;,
\end{equ}
where the kernel $g$ is given by
\begin{equs}[eqn:gfun]
 g(x) \equiv \,x^{H-1/2} + (H-3/2)\,x\,\int_0^1 {(u+x)^{H-5/2}\over
(1-u)^{H-1/2}}\,du\;,
\end{equs}
and the constant $\gamma_H$ is given by $\gamma_H = (H-1/2)\alpha_H \alpha _{1-H}$. Here, $\alpha_H$ is
the constant appearing in \eref{repr1}.
It was shown in \cite[Prop~A.2]{Hair:Ohas:07}  that $\CG$ is indeed
 a bounded linear operator from $\CW$ into 
$\CWt$. Furthermore, we can quantify its behaviour near $t=0$ in the following way:
%

\begin{lemma}\label{lem:divct} 
On
any time interval bounded away from $0$, the map
$t \mapsto \CG \omega(t)$  is $\CC^\infty$.
Furthermore, if we set $f_\omega (t) = t {d\over dt} \CG \omega(t)$, then we have $f_\omega(0) = 0$ and for every $T>0$, there exists
a constant $M_T$ such that 
$\|f_\omega \|_{0,T,\gamma} < M_T\|\omega\|_{(\gamma,\delta)}$.
\end{lemma}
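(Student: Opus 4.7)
The plan is to differentiate the integral in \eref{eqn:opera} under the integral sign and substitute $x = t/r$; this yields the working formula
\begin{equ}
f_\omega(t) = t\,{d\over dt}\CG\omega(t) = \gamma_H \int_0^\infty g'(x)\,\omega(-t/x)\,dx\;,
\end{equ}
from which the entire statement can be read off. The main task is to justify this identity rigorously and to extract the claimed bounds from it.

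The analytic heart of the matter is the behaviour of $g'$ at the two endpoints. As $x \to \infty$, the dominant part of $g$ is $x^{H-1/2}$, so $g'(x) = O(x^{H-3/2})$ there. As $x \to 0$, a naive differentiation of \eref{eqn:gfun} produces a term of order $x^{H-3/2}$, which would be too singular to close the argument. However, substituting $v = u/x$ in the integral defining $g$ and letting $x \to 0$ reveals a precise cancellation between the $x^{H-1/2}$ contribution from the first summand of $g$ and that coming from $(H-{3\over2})x\int_0^1 (u+x)^{H-5/2}(1-u)^{-(H-1/2)}du$. After this cancellation one sees that $g(x) = O(x)$ near the origin, so $g'$ is in fact bounded as $x \to 0$. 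This is the only non-routine input in the proof.

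With these bounds on $g'$, the H\"older estimate $|\omega(-t/x)| \le \|\omega\|_{(\gamma,\delta)}(t/x)^\gamma(1+t/x)^\delta$ shows that the integrand of the working formula is dominated, for $t \in [0,T]$, by an integrable function of $x$: near $x = 0$ it behaves like $x^{-\gamma-\delta}$, integrable since $\gamma+\delta < 1$, and near $x = \infty$ like $x^{H-3/2-\gamma}$, integrable since $\gamma > H - 1/2$. This simultaneously justifies the differentiation under the integral, yields continuity of $f_\omega$ on $[0,T]$, and gives $f_\omega(0) = 0$ by dominated convergence (using $\omega(0) = 0$). The H\"older bound follows by writing
\begin{equ}
f_\omega(t) - f_\omega(s) = \gamma_H \int_0^\infty g'(x)\bigl[\omega(-t/x) - \omega(-s/x)\bigr]\,dx
\end{equ}
and applying $|\omega(-t/x) - \omega(-s/x)| \le \|\omega\|_{(\gamma,\delta)} x^{-\gamma}(t-s)^\gamma(1 + (t+s)/x)^\delta$; the same dominated-convergence argument then yields $|f_\omega(t) - f_\omega(s)| \le M_T \|\omega\|_{(\gamma,\delta)} (t-s)^\gamma$.

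The $\CC^\infty$ statement on any interval $[a,T]$ with $a>0$ is routine: iterating differentiation under the integral in \eref{eqn:opera} produces integrands involving $r^{-n-1}g^{(n)}(t/r)$, and since $g$ is smooth on $(0,\infty)$ with polynomial bounds at both endpoints, the resulting integrals are uniformly absolutely convergent for $t \in [a,T]$. The one genuine obstacle is the cancellation in $g'$ at the origin described above; without it the singularity near $x=0$ in the working formula would be too strong to balance against the H\"older regularity of $\omega$, and the estimate would fail at the relevant exponent.
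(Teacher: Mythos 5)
Your proposal follows essentially the same route as the paper: differentiate \eref{eqn:opera} under the integral sign and change variables to obtain the working formula $f_\omega(t) = \gamma_H\int_0^\infty g'(x)\,\omega(-t/x)\,dx$, then close the estimate using the asymptotics of $g'$ at $0$ and $\infty$ together with the weighted H\"older bound on $\omega$. The only structural difference is stylistic: the paper reduces the final estimate to Lemma~A.1 and Proposition~A.2 of \cite{Hair:Ohas:07} (i.e.\ to the already-established boundedness of the operator $\CG$, once $tg'(t)$ is shown to have the same $O(t)$ and $O(t^{H-1/2})$ asymptotics as $g(t)$), whereas you carry out the domination and H\"older estimates explicitly. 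That buys self-containedness and is perfectly acceptable.

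There is, however, one logical slip. You write that after the cancellation one sees $g(x) = O(x)$ near the origin, \emph{so} $g'$ is bounded as $x \to 0$. This implication is not valid on its own: $g(x)=O(x)$ says nothing about $g'(x)$ without further input (think of $x\sin(1/x)$). The paper handles this by writing down the exact identity
\begin{equ}
t\,g'(t) = g(t) + \bigl(H-\tfrac32\bigr)\bigl(H-\tfrac52\bigr)\,t^2 \int_0^1 \frac{(u+t)^{H-7/2}}{(1-u)^{H-1/2}}\,du\;,
\end{equ}
so that the bound on $g'$ near $0$ follows from the bound on $g$ \emph{plus} a direct estimate on the remainder integral, which by the same $u/t$-scaling is also $O(t^{H-3/2})\cdot t^2 = O(t^{H+1/2}) = O(t)$ for small $t$. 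To make your argument tight you should either invoke this identity or carry out your $v=u/x$ cancellation analysis directly on the three terms of $g'$ (not just on $g$), checking that the $x^{H-3/2}$ divergences cancel to leading order. This is a small, easily repaired gap; everything after that point—the domination near $x=0$ and $x=\infty$, the dominated-convergence argument giving $f_\omega(0)=0$, and the H\"older estimate via $|\omega(-t/x)-\omega(-s/x)| \le \|\omega\|_{(\gamma,\delta)}x^{-\gamma}(t-s)^\gamma(1+(t+s)/x)^\delta$—is carried out correctly and relies only on $\gamma+\delta<1$ and $\gamma>H-\tfrac12$, both of which are in force.
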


The proof of this result is postponed to the appendix.

\begin{remark}
Since $\P_H$ is Gaussian, it follows in particular that both $\|\CG \omega\|_{0,T,\gamma}$ and 
$\|f_\omega\|_{0,T,\gamma}$ have exponential moments
under $\P_H$ by Fernique's theorem \cite{Bogachev}.
\end{remark}

Let $\tau_h \colon w \mapsto w + h$ denote the translation map on 
$\CWt$. We cite the following result from \cite[Lemma 4.2]{Hair:Ohas:07}:
\begin{lemma}\label{disintegrationformula}
The regular conditional probabilities $\CP(\omega, \cdot)$ of $\P_H$ given $\omega \in \CW$ are
\begin{equ}
\CP(w,\cdot\,) = \bigl(\tau_{\CG w}\circ \CI^{H - {1\over 2}}\bigr)^* \W\;,
\end{equ}
where $\W$ is the standard $d$-dimensional Wiener measure over $\mathbb{R}_+$ and $\CI^\alpha$ is
as defined in \eref{e:frac}. 
\end{lemma}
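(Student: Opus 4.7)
The plan is to combine the Mandelbrot-Van Ness representation \eref{repr1} with a conditioning argument based on fractional calculus.

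I would start by realising $B$ as the Mandelbrot-Van Ness transform of a two-sided standard Brownian motion $W$ and splitting, for $t \ge 0$,
\[
B_t = \alpha_H \int_{-\infty}^0 \bigl[(t-r)^{H-1/2} - (-r)^{H-1/2}\bigr]\,dW_r + \alpha_H \int_0^t (t-r)^{H-1/2}\,dW_r \equiv A_t + M_t\;.
\]
The two Wiener integrals are over disjoint time intervals and are therefore independent: $A_t$ depends only on $W|_{(-\infty,0]}$, while $M_t$ depends only on $\tilde W \equiv W|_{[0,\infty)}$, itself a standard Brownian motion on $\R_+$.

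Next, since $H > 1/2$ the Mandelbrot-Van Ness map $W|_{(-\infty,0]} \mapsto B|_{(-\infty,0]}$ is (almost surely) invertible, its inverse being a Mandelbrot-Van Ness-type transform of order $1-H$; in particular the past-$\sigma$-algebras of $W$ and $B$ coincide. Conditioning on $\omega$ therefore freezes $A_\cdot$ to a deterministic function of $\omega$ while leaving the law of $\tilde W$ untouched. An integration by parts in $M_t$, using that $\tilde W_0 = 0$ and that $(t-r)^{H-1/2}$ vanishes at $r=t$, converts
\[
M_t = \alpha_H(H-1/2)\int_0^t (t-r)^{H-3/2}\,\tilde W_r\,dr = \alpha_H\,\Gamma(H+1/2)\,\CI^{H-1/2}\tilde W(t)\;,
\]
and the standard normalisation of $\alpha_H$ absorbs the constant so that $M_t = \CI^{H-1/2}\tilde W(t)$. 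Thus the conditional law of $B|_{[0,\infty)}$ given $\omega$ is the distribution of the path $A_\cdot(\omega) + \CI^{H-1/2}\tilde W$, and the lemma reduces to the identification $A_\cdot(\omega) = \CG\omega$.

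The main obstacle is this last identification. I would substitute the inverted Mandelbrot-Van Ness expression (the $\alpha_{1-H}$-transform of $\omega$, which is where the factor $\alpha_{1-H}$ in $\gamma_H$ originates) for $dW_r$ in the integral defining $A_t$, and then use Fubini to collapse the resulting double integral into a single integral of $\omega(-r)$ against a deterministic kernel $K(t,r)$. A further integration by parts that moves a derivative onto the kernel, followed by the substitution $u = s/r$ inside the inner integral, should extract the homogeneous scaling $K(t,r) = (\gamma_H/r)\,g(t/r)$ with $g$ as in \eref{eqn:gfun} and $\gamma_H = (H-1/2)\alpha_H\alpha_{1-H}$. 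The remainder is bookkeeping: checking that all boundary terms at $r = 0^-$ and at $r = -\infty$ vanish, and that the resulting convolution converges for every $\omega \in \CW_{(\gamma,\delta)}$ thanks to the two-parameter nature of the norm \eref{noisespace}.
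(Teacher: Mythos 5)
The paper does not actually prove this lemma; it is cited verbatim from \cite[Lemma 4.2]{Hair:Ohas:07}, so there is no internal argument to compare against. Your outline is nonetheless the right one and matches the strategy used in that reference: split the Mandelbrot--Van Ness integral at $r=0$ into a piece $A_t$ built from $W|_{(-\infty,0]}$ and an independent piece $M_t$ built from the increments of $W$ on $[0,\infty)$; use the coincidence of the past filtrations of $B$ and $W$ so that conditioning on $\omega$ fixes $A_\cdot$ exactly while leaving the future Wiener process free; and rewrite $M_\cdot$ via integration by parts as a fractional integral of a standard Brownian motion.

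However, two of the steps you flag as routine carry essentially all the content. First, the identification $A_\cdot(\omega)=\CG\omega$ is the computational heart of the lemma: deriving the kernel $g$ of \eref{eqn:gfun} requires substituting the inverted Mandelbrot--Van Ness transform, applying Fubini, and integrating by parts, and the interchange of integrals is only licit because the weighted norm \eref{noisespace} controls both the H\"older behaviour near $0$ and the polynomial growth at $-\infty$; this is exactly what \cite[Prop.~A.2]{Hair:Ohas:07} establishes, and calling it ``bookkeeping'' understates it. Second, the assertion that $W|_{(-\infty,0]}$ and $B|_{(-\infty,0]}$ generate the same $\sigma$-algebra, which underlies the reduction of conditioning on $\omega$ to conditioning on the past of $W$, needs an argument: one must exhibit the inverse transform explicitly and check it is well-defined on $\CW_{(\gamma,\delta)}$. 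Finally, a small but genuine point: your integration by parts yields $M_t=\alpha_H\Gamma(H+\tfrac12)\,\CI^{H-1/2}W(t)$, not $\CI^{H-1/2}W(t)$, so either $\alpha_H$ must be normalised so that this constant is $1$ (a nonstandard choice) or the Wiener measure must be rescaled. The paper itself is cavalier here --- compare the lemma statement with \eref{e:defBtilde} --- so this is a convention mismatch rather than an error, but it should be acknowledged rather than silently absorbed.
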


We will henceforth interpret the above lemma in the following way. The driving 
fractional Brownian motion $B$ can be written as the sum of two independent processes: 
\begin{equ}
B_t  \stackrel{\mathrm{d}}{=} \tilde{B}_t + (\CG \omega)(t) = \tilde B_t + m_t\;,
\quad \tilde{B}_t = \alpha_H \int_0^t (t-s)^{H-1/2}\,dW(s)\;, \label{e:defBtilde}
\end{equ}
where  $W$ is a standard Wiener process independent of the `past' $\omega \in \CW$.
This notation will be used repeatedly in the sequel.

\begin{remark}\label{rem:defEtilde}
In the notation of Section~\ref{sec:basicdef}, we will repeatedly use the notations $\tE$ (and $\tprob$) for 
conditional expectations (and probabilities) over $\tilde \omega$ with $\omega$ fixed. 
In the notation \eref{e:defBtilde}, this is the same as fixing $\omega$ and taking expectations with respect to $\tilde B$.
\end{remark}

\section{Estimates on the solutions}
\label{sec:bounds}

In this section, we derive moment estimates for the solutions to equations of the form:
\begin{equ}[e:SDE]
dX_t =  V_0(X_t) \,dt + \sum_{i=1}^d  V_i(X_t)\circ d\bth^i\;,\quad t \geq 0\;,
\end{equ}
where $x = X_0 \in \mathbb{R}^n$ and $V_i, i = 1, 2,\cdots,d$ are bounded,
 $\CC^\infty$ vector fields in 
$\mathbb{R}^n$ with bounded derivatives, and $V_0$  is a possibly \emph{unbounded} $\CC^\infty$ vector field
with bounded derivatives. Our bounds will be purely pathwise bounds, so the fact that the $B^i$'s are sample paths
of fractional Brownian motion is irrelevant, except to get a bound on their H\"older regularity.

All that we will assume is that the driving process is $\gamma$-H\"older continuous for some $\gamma > 1/2$. 
The main result in this section are a priori bounds not only on \eref{e:SDE}, but also on its Jacobian and its second variation
with respect to the initial condition. This will be a slight
generalisation of the results in \cite{Hu:Nual:07}, which required the drift term $V_0$ to be bounded.

 Let $\phi_t(x,\omega) \equiv X_t$ denote the flow map solving \eref{e:SDE} with initial condition $X_0 = x$ 
 and define its Jacobian by
 \begin{equs}
J_{0,t} &\equiv \frac{\partial{\phi_t(x,\omega)}}{\partial x }\;.
\end{equs}
For notational convenience,  set $V = (V_1,V_2, \cdots,  V_d)$.
Then, we can write \eref{e:SDE} in compact form:
\minilab{e:sdejac}
\begin{equs}
dX_t  &= V_0(X_t) \,dt + V(X_t) \circ dB_t \;,  \label{eqn:sdedem}\\
dJ_{0,t} &= V'_0(X_t)J_{0,t} dt + V'(X_t)J_{0,t} \circ dB_t \;,\label{eqn:sflow}\\
dJ^{-1}_{0,t} &= -J^{-1}_{0,t}V'_0(X_t) \,dt  - J_{0,t}^{-1}V'(X_t) \circ dB_t\;. \label{eqn:sflowinv}
\end{equs}
Here, both $J$ and $J^{-1}$ are $n\times n$ matrices, and $J_{0,0} = J^{-1}_{0,0} = 1$.
One crucial ingredient in order to obtain the bounds on the Malliavin matrix required to show the strong Feller property
is control on the moments of both the solution and its Jacobian. These bounds 
do not quite follow from standard results
since most of them require that $V_0$ is also bounded. 
However  we cannot assume this  since we need condition \eref{e:dissip} 
 for showing ergodicity.
\subsection{Moments estimates}

Let  $\CL_k(\CX_1,\ldots,\CX_k;\CY)$ denote the set of $k$-multilinear maps
from $\CX_1\times\ldots\times \CX_k$ to $\CY$. As usual, $\CL_1$ is denoted by $\CL$.
In this section, we consider a system of differential equations of the form
\minilab{e:generalequ}
\begin{equs}
x_t &= x_0 + \int_{0}^t f_0(x_r)\,dr + \int_0^t f(x_r)\,d\noise_r\;,  \label{eqn:nuaxt0}\\
\y_t &= \y_0 + \int_0^t g(x_r)(\y_r, \,d \noise_r)\;, \label{eqn:nuazt0}
\end{equs}
where $x_t \in \R^n$, $\y_t \in \R^m$, $\noise:[0,\infty) \mapsto \mathbb{R}^d$ is a H\"{o}lder function of order $\gamma > 1/2$ and
$f_0 \colon \R^n \to \R^n$, $f:\R^{n} \mapsto \CL(\mathbb{R}^{d}; \R^n)$, $g:\R^n \mapsto \CL_2(\R^m, \R^{d}; \R^{m})$  are given $\CC^1$ functions.
Note that \eref{e:sdejac} is indeed of this form with $m = 2n^2$.

\begin{remark}
The reason why we do not include a `$dr$' term in \eref{eqn:nuazt0} is because this is already covered by the present
result by setting $\noise_1(r) = r$ for example. Treating this term separately like in \eref{eqn:nuaxt0} might allow 
 to slightly improve our results,
but the present formulation is sufficient for our purpose.
\end{remark}

Regularity of solutions to equations of this kind
has been well studied, pioneered by the technique of
Young (\cite{Youn:36}) and more recently using the theory of rough paths (see 
for example \cite{Lyon:Zhon:02,Friz}
and the references therein). Using fractional derivatives,
 it was shown in \cite[Thm~3]{Hu:Nual:07} that
\begin{equs} 
\sup_{0\leq t\leq T}|\y_t|&\leq 2^{1 + M \|\noise\|^{1/\gamma}_{0,T,\gamma}} |\y_0|\;, \label{eqn:hunu2}
\end{equs}
where  $\|\noise\|_{0,T,\gamma}$ is the H\"older norm defined in
\eref{eqn:Holdnorm} and $M$
is a constant depending on the supremum norms of $f_0$, $f$, $g$ and their first derivatives.
As mentioned earlier, these estimates are not sufficient to obtain moment bounds on the Jacobian $J_{0,t}$ 
and its inverse, since we wish to  consider situations where $f_0$ is unbounded, so that
its supremum norm is not finite.

However it turns out that  an estimate
of the type \eref{eqn:hunu2} holds even if only the derivative of $f_0$ is bounded, thanks to the fact 
that the corresponding ``driving noise'' ($t$) is actually a differentiable function.   
This is the content of the following result:

\begin{lemma} \label{lem:nuexthol} For the processes $x_t$ and $\y_t$ defined in \eref{e:generalequ}, we have
the pathwise bounds:\minilab{e:nuhubounds}
\begin{equs}
\|x\|_{0,T,\gamma} &\leq M (1+ |x_0| )\, \big(1 + \|\noise\|_{0,T,\gamma} \big)^{1/\gamma}\label{eqn:nuhulkbd1}\;,\\
\|\y\|_{0,T,\gamma} &\leq M(1+ |x_0| )\, |\y_0| \,e^{M \|\noise\|_{0,T,\gamma}^{1/\gamma}}\;, \label{eqn:nuhulkbd3}
\end{equs}
where $M$ is an absolute constant which depends only on $\|f\|_{\infty}$, $\|f'\|_{\infty}$, $\|f_0'\|_{\infty}$,
 $\|g\|_{\infty}$, $\|g'\|_{\infty}$ and $T$.
\end{lemma}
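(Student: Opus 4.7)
The plan is to follow the general philosophy of \cite{Hu:Nual:07}, adapted to accommodate the fact that $f_0$ is only assumed to have a bounded derivative rather than being bounded. The crucial observation is that the drift term $\int_0^t f_0(x_r)\,dr$ is driven by a Lipschitz function (namely $r$), so standard Lebesgue estimates together with the linear growth bound $|f_0(x)|\le |f_0(0)| + \|f_0'\|_\infty|x|$ suffice for it, while the remaining Young integral against $B$ is handled exactly as in the bounded case.

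My first step would be to derive a local Hölder bound on a short subinterval $[a,a+h]\subset [0,T]$. For $a\le s<t\le a+h$, I would estimate the Young integral $\int_s^t f(x_r)\,dB_r$ by the standard Young inequality, yielding a bound of the form $C\|B\|_{0,T,\gamma}\bigl(\|f\|_\infty + h^\gamma\|f'\|_\infty\|x\|_{a,a+h,\gamma}\bigr)(t-s)^\gamma$, and I would estimate the drift integral crudely by $\bigl(|f_0(0)|+\|f_0'\|_\infty(|x_a|+h^\gamma\|x\|_{a,a+h,\gamma})\bigr)(t-s)$. Dividing by $(t-s)^\gamma$ and taking the supremum yields an inequality of the shape
\begin{equation*}
\|x\|_{a,a+h,\gamma}\le C_1(1+|x_a|) + C_2\bigl(h^\gamma + h\cdot h^{-\gamma}\cdot 0 +\|B\|_{0,T,\gamma}h^\gamma\bigr)\|x\|_{a,a+h,\gamma}.
\end{equation*}
Choosing $h$ so that $h^\gamma(1+\|B\|_{0,T,\gamma})$ is a small absolute constant (i.e. $h\asymp (1+\|B\|_{0,T,\gamma})^{-1/\gamma}$) allows me to absorb the last term on the left, producing the local bound $\|x\|_{a,a+h,\gamma}\le C(1+|x_a|)$.

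Next I would patch these local estimates: the pointwise bound $|x_{a+h}|\le |x_a|+h^\gamma\|x\|_{a,a+h,\gamma}\le (1+C h^\gamma)(1+|x_a|)$ gives a geometric recursion, so after at most $N\asymp T/h\asymp T(1+\|B\|_{0,T,\gamma})^{1/\gamma}$ steps I obtain $\sup_{t\le T}|x_t|\le e^{CNh^\gamma}(1+|x_0|)$. Since $Nh^\gamma$ is $\mathcal O(1)$ by the choice of $h$, this supremum stays bounded by $M(1+|x_0|)$, and feeding this back into the local Hölder estimate and stitching them via triangle inequality for $|s-t|>h$ gives \eref{eqn:nuhulkbd1}. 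For the second bound, once $\|x\|_{0,T,\gamma}$ is controlled, the equation \eref{eqn:nuazt0} is a linear Young equation in $y$ with driver $g(x_\cdot)$, whose $\gamma$-Hölder norm is bounded by $\|g'\|_\infty\|x\|_{0,T,\gamma}$. The same local-plus-patching argument, now yielding a multiplicative rather than affine local estimate $\|y\|_{a,a+h,\gamma}\le C|y_a|$, produces after iteration a factor $(1+Ch^\gamma)^N\le \exp(CNh^\gamma)$; keeping track of the dependence of the local constant on $(1+|x_0|)$ through the Hölder norm of the driver yields \eref{eqn:nuhulkbd3}.

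The main obstacle I anticipate is the bookkeeping in the first step: carefully separating the various contributions so that the constant multiplying $\|x\|_{a,a+h,\gamma}$ can be made small by choosing $h$ small, while the constant multiplying $(1+|x_a|)$ does not blow up. In particular, the drift contribution $\|f_0'\|_\infty h^{1-\gamma}|x_a|$ has a positive power of $h$ only for $\gamma<1$, which is why we need $\gamma<1$ (automatic here) but also why the factor $(1+|x_0|)$ rather than $|x_0|$ appears in \eref{eqn:nuhulkbd1}. Everything else is a by-now standard Young/fractional integration computation, of the type already carried out in \cite[Thm~3]{Hu:Nual:07}, and can be imported essentially verbatim once the drift is handled as above.
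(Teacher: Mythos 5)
Your overall strategy (local H\"older estimate on subintervals of length $h\asymp(1+\|\noise\|_{0,T,\gamma})^{-1/\gamma}$, then patching) is the paper's, but the patching step contains a miscalculation which, as stated, fails to produce the polynomial bound \eref{eqn:nuhulkbd1}. You write the recursion as $1+|x_{a+h}|\le(1+Ch^\gamma)(1+|x_a|)$ and then assert that $Nh^\gamma=\mathcal O(1)$. Neither claim is correct: with $N\asymp T/h$ one has $Nh^\gamma=Th^{\gamma-1}\asymp T(1+\|\noise\|_{0,T,\gamma})^{(1-\gamma)/\gamma}$, which diverges as the H\"older norm of the noise grows. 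So the resulting factor $\exp(CNh^\gamma)$ is exponential, not polynomial, in $\|\noise\|_{0,T,\gamma}$, which is strictly weaker than \eref{eqn:nuhulkbd1}.

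The point you flag in your last paragraph — the drift contribution to $\|x\|_{a,a+h,\gamma}$ scales like $\|f_0'\|_\infty|x_a|h^{1-\gamma}$ — is precisely what fixes this, but it has to be fed into the recursion correctly. Multiplying the local H\"older bound by $h^\gamma$ to get back to the supremum norm, the coefficient of $|x_a|$ is of the form $1+Ch^{1-\gamma}\cdot h^{\gamma}=1+Ch$, to the \emph{first} power of $h$, while the $\|\noise\|_{0,T,\gamma}$-dependence lands only on the additive term (which, by the very choice of $h$, is an absolute constant per subinterval). One then gets $\|x\|_{s,t,\infty}\le(1+Ch)|x_s|+1$, so the multiplicative factor over all $N$ steps is $(1+Ch)^{T/h}\le e^{CT}$, an absolute constant, and the polynomial factor $(1+\|\noise\|_{0,T,\gamma})^{1/\gamma}$ emerges from the $N$ in the accumulated geometric sum of additive terms together with the $N^{1-\gamma}$ factor in the H\"older-gluing step (the paper's Lemma~\ref{lem:holdgamadd}). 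This is exactly the observation the paper highlights after its display \eqref{eqn:xlinfbd}: the exponent is independent of $h$ precisely because the drift is driven by the differentiable ``noise'' $t$, not a merely $\gamma$-H\"older one. You should also note that your local bound $\|x\|_{a,a+h,\gamma}\le C(1+|x_a|)$ silently suppresses a factor $(1+\|\noise\|_{0,T,\gamma})$ in the $|x_a|$-free term; that factor must be retained, as it contributes to the final $(1+\|\noise\|_{0,T,\gamma})^{1/\gamma}$. Once these bookkeeping corrections are made, the argument for $x$ lines up with the paper's, and the multiplicative treatment of $y$ (giving the genuinely exponential factor in \eref{eqn:nuhulkbd3}) is then routine.
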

\begin{proof}
The proof is almost identical to that of \cite{Hu:Nual:07}, so we only try to highlight the main differences. 
For $s,t  \in [0,T]$,
we have the apriori bound (see \cite[p. 403]{Hu:Nual:07}),
\begin{equs}
\left|\int_{s}^t f(x_r)\, d\noise_r \right| \leq M \|\noise\|_{0,T,\gamma} \Big((t-s)^{\gamma} + 
\|x\|_{s,t,\gamma} (t-s)^{2\gamma}\Big)\;,
\end{equs}
where the constant $M$ is indepedent of $\|\noise\|_{0,T,\gamma}$, but depends on $f$.
Since $f_0$ grows at most linearly, we also have
\begin{equ}
\left|\int_{s}^t f_0(x_r)\, dr \right| \leq M  \Big((t -s) + |x_s| (t-s) + \|x\|_{s,t,\gamma} (t-s)^{\gamma +1}\Big) \;,
\end{equ}
and therefore
\begin{equ}
\sup_{s \leq m,n \leq t} 
\left|\int_{m}^n f_0(x_r)\, dr \right| \leq M  \Big((t -s) + |x_s| (t-s) + 2\|x\|_{s,t,\gamma} (t-s)^{\gamma +1}\Big)\;.
\label{eqn:keyfnuestlin} 
\end{equ}
Thus we have 
\begin{equ}
\|x\|_{s,t,\gamma} \leq M_1\Big(1 +\|\noise\|_{0,T,\gamma}  + 
|x_s| (t-s)^{1- \gamma} +(\|\noise\|_{0,T,\gamma} + 1) \|x\|_{s,t,\gamma} (t-s)^{\gamma}\Big)
\end{equ}
for a constant $M_1$ depending on $f$ and the terminal time $T$. 
Set $\Delta = \big (2M_1(\|\noise\|_{0,T,\gamma} + 1)\big)^{-1/\gamma}$.
Then for $|t -s| < \Delta$,
\begin{equs}[eqn:tempholdbd]
\|x\|_{s,t,\gamma} &\leq 2M_1 \Big(1 +\|\noise\|_{0,T,\gamma} + 
|x_s| \Delta^{1- \gamma} \Big)\;.
\end{equs}
Since by definition we have:
\begin{equs}
\|x\|_{s,t,\infty} &\leq |x_s| + \|x\|_{s,t,\gamma} |t-s|^{\gamma}\;,
\end{equs}
for $|t -s| < \Delta$, from \eref{eqn:tempholdbd} it follows that 
\begin{equs}[eqn:delta2m1est]
\|x\|_{s,t,\infty} &\leq |x_s| (1 + 2M_1 \Delta ) + 1\;.
\end{equs}
Iterating the above estimate for $N = T/\Delta$, it follows that
\begin{equs}
\|x\|_{0,T,\infty} &\leq |x_0|(1 + 2M_1 \Delta)^{N} + \sum_{k=1}^{N-1}(1 + 2M_1 \Delta )^{k} 
\leq (1 + |x_0|)\,(1 + 2M_1 \Delta)^{N} N \\
&= (1 + |x_0|)\,(1 + 2M_1 \Delta)^{T/\Delta} T/\Delta  \;.
\end{equs}
Using the fact that $(1 + {x \over \Delta})^{\Delta} \leq e^x$ for $\Delta > 0$, this finally yields
\begin{equ}[eqn:xlinfbd]
\|x\|_{0,T,\infty} \leq  (1+ |x_0| ) \,e^{2M_1T}\, T\,(2M_1(\|\noise\|_{0,T,\gamma} + 1)\big)^{1/\gamma}\;.
\end{equ}

Note that in the last step of the above argument we bounded  $(1 + 2M_1 \Delta)^{T/\Delta}$
from above by $e^{2M_1 T}$ and thus obtained a constant which is exponential in $T$, but \emph{independent}
of $\Delta = \mathcal{O}(\|\noise\|_{0,T,\gamma}^{-1/\gamma})$. If the noise corresponding to the
vector field $f_0$ were \emph{not} differentiable but only H\"older continuous,  then
 instead of  \eref{eqn:delta2m1est},  the exponent would include a power of the H\"older constant of the driving noise,
thus yielding an estimate comparable to those obtained in \cite{Hu:Nual:07}.

Thus substituting the bound \eref{eqn:xlinfbd} in \eref{eqn:tempholdbd} yields that,
for $t - s < \Delta$,
\begin{equs} \label{eqn:holdbd01}
\|x\|_{s,t,\gamma} &\leq M (1 + |x_0| ) \big(1 +\|\noise\|_{0,T,\gamma} \big)\;.
\end{equs}
Using the fact the function $f(x) = x^\gamma$ is concave for $\gamma < 1$, it is shown in Lemma \ref{lem:holdgamadd} (proof given in the Appendix) that 
 for  $0= u_0 < u_1<u_2< \cdots< u_{N-1} < u_N=T$,  we have
the bound
\begin{equs}\label{eqn:holdbd021}
\|x\|_{0,T,\gamma} \leq  N^{1- \gamma} \, \max_{0 \leq i \leq N-1} \|x\|_{u_i,u_{i+1},\gamma}  \;.
\end{equs}
Thus from \eref{eqn:holdbd01} and \eref{eqn:holdbd021}
we deduce that
\begin{equs}
\|x\|_{0,T,\gamma} &\leq M (1 + |x_0| ) \big(1 +\|\noise\|_{0,T,\gamma} \big) N^{1- \gamma} =M (1 + |x_0| ) \big(1 +\|\noise\|_{0,T,\gamma} \big)^{1/\gamma}
\;,
\end{equs}
proving the claim made in \eref{eqn:nuhulkbd1}.

Now we come to the second part of Lemma \ref{lem:nuexthol}. Once again, for $s,t \in [0,T]$
we have the apriori bound (see \cite[ p. 406]{Hu:Nual:07})
\begin{equs}
\left|\int_{s}^t g(x_r) \y_r d\noise_r \right| &\leq  \tilde{M}(1 + \|\noise\|_{0,T,\gamma}) \Big(\|g\|_{\infty} \|\y\|_{s,t,\infty}
(t-s)^\gamma \label{eqn:apgbd} \\
&+ (\|g\|_{\infty} \|\y\|_{s,t,\gamma} + \|g'\|_{\infty} \|\y\|_{s,t,\infty} \|x\|_{s,t,\gamma})(t-s)^{2\gamma}\Big)
\end{equs}
for a constant $\tilde{M}$ independent of $\noise$. Thus, setting as before
 $\Delta = \big (2M_1(\|\noise\|_{0,T,\gamma} + 1)\big)^{-1/\gamma}$, we deduce from \eref{eqn:tempholdbd}
and \eref{eqn:apgbd} that,
\begin{equ}
\|\y\|_{s,t,\gamma} \leq  M_2 (1 +\|\noise\|_{0,T,\gamma}) \Big( \|\y\|_{s,t,\infty} 
+  (1 + \|\noise\|_{0,T,\gamma}) \|\y\|_{s,t,\infty}\Delta^{\gamma} +
 \|\y\|_{s,t,\gamma}\Delta^{\gamma}\Big)\;, \label{eqn:estNualcont}
\end{equ}
for a large enough constant  $M_2 \geq \tilde{M}(1 + |x_0| T e^{M_1 T} + \|g\|_{\infty} + \|g'\|_{\infty})$.
From the estimate \eref{eqn:estNualcont}, a straightforward calculation (for instance, see \cite[p. 407]{Hu:Nual:07}) yields
\begin{equs}
\|\y\|_{0,T,\infty} &\leq M 2^ {MT(1 + \|\noise\|_{0,T,\gamma})^{1/\gamma}}|\y_0| \;,\\
\|\y\|_{0,T,\gamma} &\leq M (1+ \|\noise\|_{0,T,\gamma})^{ 2/\gamma}2^{MT{(1 + \|\noise\|_{0,T,\gamma})^{1/\gamma}}}|\y_0| \;,
\end{equs}
for a large constant $M$, thus concluding the proof.
\end{proof}

In the next sections, we will also require bounds on the \textit{second} derivative of the solution flow with respect to
its initial condition. Given an equation of the type \eref{e:generalequ}, the second variation $\z_t \in \R^p$ of $x$ with respect to its initial condition
satisfies an equation of the type
\begin{equ}[e:zhat]
\z_t = \z_0 + \int_0^t h(x_r)(\z_r,\,d\noise_r) + \int_0^t \hat h(x_r)(\y_r,\,\y_r,\,d\noise_r)\;.
\end{equ}
Here, the maps $h\colon \R^n \to \CL_2(\R^p, \R^d; \R^p)$ and $\hat h\colon \R^n \to \CL_3(\R^m,\R^m,\R^d; \R^p)$ are
bounded with bounded derivatives.
Bounds on $\z$ are covered by the following corollary to Lemma~\ref{lem:nuexthol}:

\begin{corollary}\label{cor:secondvariation}
For the process $\z_t$ defined in \eref{e:zhat}, we have
the pathwise bounds:
\minilab{e:nuhubounds}
\begin{equ}
\|\z\|_{0,T,\gamma} \leq M \bigl((1+ |x_0|^5 )(1+ |\y_0|^2 ) + |\z_0|(1+|x_0|)\bigr)\, \exp\big(M\|\noise\|_{0,T,\gamma}^{1/\gamma} \big)\;,
\end{equ}
where $M$ is an absolute constant which depends only on $\|f\|_{\infty}$, $\|f'\|_{\infty}$, $\|f_0'\|_{\infty}$,
 $\|g\|_{\infty}$, $\|g'\|_{\infty}$ and $T$.
\end{corollary}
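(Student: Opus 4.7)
The plan is to apply Lemma~\ref{lem:nuexthol} to the equation for $\z$, treating the $\hat h$-term as an inhomogeneous forcing. Define $F_t \equiv \int_0^t \hat h(x_r)(\y_r,\y_r,d\noise_r)$, so that \eref{e:zhat} reads $\z_t=\z_0+F_t+\int_0^t h(x_r)(\z_r,d\noise_r)$; this is an equation of exactly the form \eref{eqn:nuazt0} for $\z$, except for the extra additive forcing $F_t$.

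The first step is to control $\|F\|_{0,T,\gamma}$ by a polynomial in $|x_0|$ and $|\y_0|$ times $\exp(M\|\noise\|_{0,T,\gamma}^{1/\gamma})$. The standard Young estimate for rough integrals (analogous to \eref{eqn:apgbd}), together with the product and chain rules applied to $\hat h(x)\y\otimes\y$, yields
\begin{equ}
|F_t-F_s| \leq M\|\noise\|_{0,T,\gamma}\bigl(\|\y\|_{s,t,\infty}^2(t-s)^\gamma + (\|x\|_{s,t,\gamma}\|\y\|_{s,t,\infty}^2 + \|\y\|_{s,t,\infty}\|\y\|_{s,t,\gamma})(t-s)^{2\gamma}\bigr)\;.
\end{equ}
Plugging in the already-established bounds \eref{eqn:nuhulkbd1} and \eref{eqn:nuhulkbd3} together with $\|\y\|_{0,T,\infty}\leq|\y_0|+T^\gamma\|\y\|_{0,T,\gamma}$ produces a bound on $\|F\|_{0,T,\gamma}$ of the polynomial-times-exponential form required, with all $\|\noise\|_{0,T,\gamma}$-dependence absorbed into a single $\exp(M\|\noise\|_{0,T,\gamma}^{1/\gamma})$ factor.

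The second step is to re-run the second half of the proof of Lemma~\ref{lem:nuexthol} with $\y$ replaced by $\z$, keeping the additive $\|F\|_{0,T,\gamma}$ term throughout. Setting again $\Delta=(2M_1(\|\noise\|_{0,T,\gamma}+1))^{-1/\gamma}$, the analogue of \eref{eqn:estNualcont} reads
\begin{equ}
\|\z\|_{s,t,\gamma}\leq M_2(1+\|\noise\|_{0,T,\gamma})\bigl(\|\z\|_{s,t,\infty}+\|\z\|_{s,t,\gamma}\Delta^\gamma\bigr)+\|F\|_{0,T,\gamma}\;.
\end{equ}
Absorbing the $\|\z\|_{s,t,\gamma}\Delta^\gamma$ term on the left, converting to a sup-norm recursion via $\|\z\|_{s,t,\infty}\leq|\z_s|+\Delta^\gamma\|\z\|_{s,t,\gamma}$, and iterating over $N=\lceil T/\Delta\rceil$ intervals exactly as in the derivation of \eref{eqn:xlinfbd} gives $\|\z\|_{0,T,\infty}\leq e^{M\|\noise\|_{0,T,\gamma}^{1/\gamma}}(|\z_0|+\|F\|_{0,T,\gamma})$. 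Feeding this back into the short-time H\"older bound and combining with the sub-additivity \eref{eqn:holdbd021} of the $\gamma$-H\"older seminorm then yields the claimed estimate.

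The main obstacle is the bookkeeping of polynomial factors in $|x_0|$ and $|\y_0|$ through the product-rule expansion of $\hat h(x)\y\otimes\y$ and through the $N$-step iteration: the $(1+|x_0|^5)(1+|\y_0|^2)$ prefactor on the forcing contribution arises from combining the cubic growth coming from $\|x\|_{s,t,\gamma}\|\y\|_{s,t,\infty}^2\lesssim(1+|x_0|)^3(1+|\y_0|^2)$ with additional $(1+|x_0|)$ factors accumulated during the sup-norm iteration, while the homogeneous $|\z_0|(1+|x_0|)$ contribution comes directly from the $\y$-bound already established in Lemma~\ref{lem:nuexthol}.
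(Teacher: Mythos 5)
Your approach is correct in spirit but genuinely different from the paper's. You treat the $\hat h$-term as an inhomogeneous forcing $F_t$ and re-run the H\"older-iteration machinery from the proof of Lemma~\ref{lem:nuexthol} with this added forcing. The paper instead uses the \emph{variation of constants formula}: it introduces the solution $\hz_t$ to the homogeneous linear equation $\hz_t\xi = \xi + \int_0^t h(x_r)(\hz_r\xi,\,d\noise_r)$, notes that $\hz^{-1}$ solves a similar equation so both satisfy the bound \eref{eqn:nuhulkbd3} from Lemma~\ref{lem:nuexthol}, and then writes $\z_t = \hz_t\z_0 + \hz_t\int_0^t\hz_s^{-1}\hat h(x_s)(\y_s,\y_s,d\noise_s)$. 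The remaining work is purely algebraic: a Young-type estimate \eref{e:boundprod} for the rough integral together with the submultiplicativity of the H\"older norm yields the claimed bound directly from the bounds on $\|\hz\|_\gamma$, $\|\hz^{-1}\|_\gamma$, $\|x\|_\gamma$, $\|y\|_\gamma$.

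The paper's route is preferable for two reasons. First, it avoids duplicating the delicate $\Delta$-partition / absorption / iteration argument, which is the part of the proof of Lemma~\ref{lem:nuexthol} most prone to bookkeeping errors (e.g.\ one must be careful that the coefficient of the $\|\cdot\|_{s,t,\gamma}\Delta^\gamma$ term to be absorbed does not itself carry an $|x_0|$-dependence, otherwise $\Delta$ and hence $N$ would too). Second, it makes the polynomial powers in $|x_0|$ and $|\y_0|$ transparent: the factor $(1+|x_0|)^5$ arises mechanically from multiplying the five bounds $\|Z\|_\gamma$, $\|Z^{-1}\|_\gamma$, $|x_0|+\|x\|_\gamma$, and $(|y_0|+\|y\|_\gamma)^2$, each contributing one power of $(1+|x_0|)$. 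Your explanation of where the $(1+|x_0|^5)$ comes from ("additional factors accumulated during the sup-norm iteration") does not quite match the accounting your own method would give — if carried through carefully your decomposition would in fact deliver the sharper power $(1+|x_0|)^4$ on the forcing contribution, which is of course still consistent with the stated corollary. Both routes lead to the stated conclusion, but you should be aware that the variation-of-constants argument is the standard and more economical way to pass from a homogeneous linear bound to an inhomogeneous one.
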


\begin{proof}
Denote by $\hz_t$ the $\R^{p\cdot p}$-valued solution to the homogeneous equation
\begin{equ}
\hz_t\xi = \xi + \int_0^t h(x_r)(\hz_r\xi,\,d\noise_r)\;,\quad \xi \in \R^p\;.
\end{equ}
Note that the inverse matrix $\hz_t^{-1}$ then satisfies a similar equation, namely
\begin{equ}
\hz_t^{-1}\xi = \xi - \int_0^t \hz_r^{-1} h(x_r)(\xi,\,d\noise_r)\;,\quad \xi \in \R^p\;.
\end{equ}
It follows from Lemma~\ref{lem:nuexthol} that both $\hz$ and $\hz^{-1}$ satisfy the bound \eref{eqn:nuhulkbd3}, just like $\y$ did.
On the other hand, $\z$ can be solved by the variation of constants formula:
\begin{equ}[e:solz]
\z_t = \hz_t \z_0 +  \hz_t \int_0^t \hz_s^{-1} \hat h(x_s)(\y_s,\,\y_s,\,d\noise_s)\;.
\end{equ}
We now use the fact that if $f$ and $g$ are two functions that are $\gamma$-H\"older continuous for some
$\gamma > {1\over 2}$, then there exists a constant $M$ such that 
\begin{equ}[e:boundprod]
\Bigl\|\int_0^\cdot f(t)\,dg(t)\Bigr\|_{0,T,\gamma} \le M ( \|f\|_{0,T,\gamma} + |f(0)|) \,\|g\|_{0,T,\gamma}\;.
\end{equ} 
(See \cite{Youn:36} or for example \cite[Eqn~2.1]{MazNour:08} for a formulation
that is closer to \eref{e:boundprod}.) Furthermore, 
\begin{equ}
\|fg\|_{0,T,\gamma} \le M\bigl(|f(0)| + \|f\|_{0,T,\gamma}\bigr)\bigl(|g(0)| + \|g\|_{0,T,\gamma}\bigr)\;,
\end{equ}
so that, from \eref{e:solz},
\begin{equs}
\|\z\|_{0,T,\gamma} &\le |z_0| \|Z\|_{0,T,\gamma} + C \bigl(1 +  \|Z\|_{0,T,\gamma}\bigr)\bigl(1 +  \|Z^{-1}\|_{0,T,\gamma}\bigr) \\
&\qquad \times \|\hat h'\|_{\infty} \bigl(|x_0| +  \|x\|_{0,T,\gamma} \bigr)\bigl(|y_0| +  \|y\|_{0,T,\gamma} \bigr)^2 \|B\|_{0,T,\gamma}\;.
\end{equs}
The claim now follows by a simple application of Lemma~\ref{lem:nuexthol}.
\end{proof}

The above two lemmas immediately give us the required \emph{conditional} moment estimates for the Jacobian $J_{0,T}$.
\begin{lemma} \label{lem:Jmom} Let $X_t, J_{0,t},m_t$
be as defined in \eref{eqn:sdedem}, \eref{eqn:sflow} and \eref{e:defBtilde}
respectively. Then
for  any $T >0$, $p \geq 1$ and $\gamma \in (1/2, H)$ we have
\begin{equs}
\tE\bigl( \|J_{0,\cdot}\|_{0,T,\gamma}^p \bigr)&< M e^{{ M (1 + \|m\|_{0,T,\gamma})}^{1/\gamma}}\;, \\
~\tE \bigl( \|J^{-1}_{0,\cdot}\|_{0,T,\gamma}^p  \bigr) &< M e^{{ M (1 + \|m\|_{0,T,\gamma})}^{1/\gamma}}\;.
\end{equs}
(Recall that $\tE$ was defined in Remark~\ref{rem:defEtilde}.)
\end{lemma}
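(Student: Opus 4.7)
The plan is to apply the pathwise estimate of Lemma~\ref{lem:nuexthol} to the Jacobian system and then take the conditional expectation $\tE$ via Fernique's theorem.

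First, I observe that \eref{eqn:sflow} and \eref{eqn:sflowinv}, coupled with \eref{eqn:sdedem}, fit the framework of \eref{e:generalequ}: setting $\y := J_{0,\cdot}$ (respectively $J^{-1}_{0,\cdot}$) viewed as a vector in $\R^{n^2}$, the drift $V_0'(X_t)J_{0,t}\,dt$ is absorbed into an additional noise coordinate $\noise_0(r)=r$ (as pointed out in the remark following \eref{eqn:nuazt0}), and the resulting $g$ is bilinear with bounded derivatives by the hypothesis on the $V_i$'s. Since $|\y_0|$ is a dimensional constant and $\|\noise_0\|_{0,T,\gamma}\leq T^{1-\gamma}$ is absorbed into the constant, \eref{eqn:nuhulkbd3} yields the pathwise bound
\begin{equ}[e:pathwiseJ]
\|J_{0,\cdot}\|_{0,T,\gamma} \vee \|J^{-1}_{0,\cdot}\|_{0,T,\gamma} \leq M(1+|x_0|)\,\exp\bigl(M\|B\|_{0,T,\gamma}^{1/\gamma}\bigr)\;,
\end{equ}
where $M$ depends only on $T$ and the sup-norms of the relevant vector fields and their derivatives.

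Next I use the decomposition $B = \tilde B + m$ from \eref{e:defBtilde}, so that $\|B\|_{0,T,\gamma} \leq \|\tilde B\|_{0,T,\gamma} + \|m\|_{0,T,\gamma}$. Since $\gamma>1/2$ gives $1/\gamma<2$, convexity of $x\mapsto x^{1/\gamma}$ yields $(a+b)^{1/\gamma}\leq 2^{1/\gamma-1}(a^{1/\gamma}+b^{1/\gamma})$. Raising \eref{e:pathwiseJ} to the $p$-th power and taking $\tE$ (under which $m$ is deterministic while $\tilde B$ is a centred Gaussian process independent of $\omega$) then gives
\begin{equ}
\tE\bigl(\|J_{0,\cdot}\|_{0,T,\gamma}^p\bigr) \leq M\,\exp\bigl(M\|m\|_{0,T,\gamma}^{1/\gamma}\bigr)\,\tE\exp\bigl(M\|\tilde B\|_{0,T,\gamma}^{1/\gamma}\bigr)\;,
\end{equ}
and similarly for $J^{-1}$.

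The remaining expectation is finite by Fernique's theorem: $\tilde B$ is a centred Gaussian element of the separable Banach space $\CC^\gamma([0,T];\R^d)$ (since its Hurst parameter exceeds $\gamma$), so $\|\tilde B\|_{0,T,\gamma}^2$ has a finite exponential moment, and a fortiori so does $\|\tilde B\|_{0,T,\gamma}^{1/\gamma}$ because $1/\gamma<2$. Absorbing this finite constant and using $\|m\|^{1/\gamma}\leq (1+\|m\|)^{1/\gamma}$ (valid since $1/\gamma\geq 1$) yields the stated bound. The only slightly delicate point is the first step: the unboundedness of $V_0$ does not obstruct the argument because only $V_0'$ (which is bounded by hypothesis) enters the Jacobian equation, and Lemma~\ref{lem:nuexthol} was precisely tailored to handle this linearly-growing drift.
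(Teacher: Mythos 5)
Your proposal is correct and follows essentially the same route as the paper: cast the Jacobian (and inverse Jacobian) system in the framework of \eref{e:generalequ} with time adjoined as an extra noise coordinate, invoke the pathwise bound \eref{eqn:nuhulkbd3} of Lemma~\ref{lem:nuexthol}, split $B=\tilde B+m$ and separate the exponent using $1/\gamma<2$, and conclude by Fernique's theorem for $\|\tilde B\|_{0,T,\gamma}$. The only cosmetic difference is that you first bound in terms of $\|B\|_{0,T,\gamma}$ and then decompose, whereas the paper decomposes the driving signal first; the substance is identical.
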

\begin{proof}
We first write $B_t = \tilde{B}_t + m_t$ as in \eref{e:defBtilde}. Thus \eref{eqn:sflow}
can be written as:
\begin{equs}
dJ_{0,t} &= V'_0(X_t)J_{0,t} dt + V'(X_t)J_{0,t} \circ (d\tilde{B}_t + dm_t)\;.
\end{equs}
By applying Lemma \ref{lem:nuexthol} to the SDE in \eref{eqn:sflow} with
with $y_t = (t, \tilde{B}_t + m_t)$, $f_0 = V_0$, $f = (0,V_1,\ldots,V_d)$, $g = (V_0',\ldots,V_d')$ 
and using the fact that $1/\gamma >1$ we obtain:
\begin{equs}[eqn:Jpbd]
\|J\|^p_{0,T,\gamma}&
\leq M \,e^{ M (1 + \|\tilde{B}\|_{0,T,\gamma}^{1/\gamma}+ \|m\|_{0,T,\gamma}^{1/\gamma})}\;. 
\end{equs}
The fact that $\gamma > {1\over 2}$ is crucial.
Since $\tilde{B}$ is a Gaussian process, it follows by Fernique's theorem \cite{Bogachev} that
the H\"older norm of $\tilde{B}$ has Gaussian tails, thus proving our first claim.
An identical argument for $J^{-1}_{0,t}$ finishes the proof.
\end{proof}

\begin{remark}
In an identical way, one can obtain similar bounds on the conditional moments of the solution
and of its second variation with respect to its initial condition.
\end{remark}

\section{Smoothness of conditional densities}
\label{sec:smooth}

In this section, we consider again our main object of study, namely the SDE
\begin{equ}
dX_t = V_0(X_t)dt + 
  \sum_{i=1}^dV_i(X_t) \circ d\bth^i \;,\qquad
X_0 = x_0 \in \mathbb{R}^n\;,
\end{equ}
where $B_t$ is a \emph{two-sided} fBM. Our aim is to show that, if the vector fields $V$
satisfy  H\"ormander's celebrated Lie bracket condition (see below), then the \emph{conditional}
distribution of $X_t$ given $\{B_s\,:\, s \le 0\}$ almost surely
has a smooth density with respect to Lebesgue measure on $\R^n$ for every $t>0$. 

In  \cite{Baud:Hair:07}, the authors studied the same equation and showed that the law of $X_t$ 
has a smooth density, but with the key difference that the driving noise $B_t$ was a 
one-sided fBm and no conditioning was involved. Being able to treat the conditioned process will be crucial
for the proof of the strong Feller property later on, this is why we consider it here.
As explained in \eref{e:defBtilde}, we can rewrite $B_t$ as 
$B_t = \tilde{B}_t + m_t$ and thus obtain
\begin{equs}[eqn:modsde]
dX_t  = V_0(X_t)\, dt + V(X_t)\,dm_t + V(X_t) \circ d\tilde{B}_t\;.
\end{equs}
While the distribution of the conditioned process $\tilde{B}_t$ is different from that of
 the one-sided fBm, its covariance satisfies very similar bounds so that the results from \cite{Baud:Hair:07}
 would apply. (See Proposition~\ref{lem:Norr} below.)
 
 However, the difficulty comes from the time derivative
 of the conditional mean $m_t$, which diverges at $t=0$ like $\mathcal{O}(t^{H-1})$.
To use the results from \cite{Baud:Hair:07} directly,  we would need the boundedness of the driving 
vector fields, uniformly in time, so that they are not applicable to our case.
Our treatment of this singularity is inspired by \cite{Hair:Matt:09},
 where a related situation appears due to the infinite-dimensionality of the system considered.
 Using an iterative argument, we will show the invertibility of a slightly modified reduced Malliavin matrix which
 will then, by standard arguments, 
 yield the smoothness of laws of the SDE given in \eref{eqn:modsde}.
 
Before we state H\"ormander's condition, let us introduce a shorthand notation.
For any multiindex $I =(i_1,i_2,\cdots,i_k) \in 
\{0,1,2,\cdots,d\}^k$, we define the commutator:
\begin{equ}
V_I  \equiv [V_{i_1}, [V_{i_2},\cdots,[V_{i_{k-1}},V_{i_k}]\cdots]]\;.
\end{equ}
We also define recursively the families of  vector fields
\begin{equ}
\mathcal{V}_n = \big\{V_{I}, I \in  \{0,\cdots, d\}^{n-1}\times \{1,\cdots,d\}\big \}\;, \label{eqn:setLie}
\qquad \bar{\mathcal{V}}_n = \bigcup_{k=1}^n \mathcal{V}_k\;. \label{eqn:unionLie}
\end{equ}
With these notations at hand, we now formulate H\"ormander's bracket condition \cite{Horm:67}:

\begin{assumption}\label{ass:Hormander}
For every $x_0 \in \mathbb{R}^n$, there exists $N \in \mathbb{N}$
such that the identity
\begin{equ}
\mbox{span}\{U(x_0)\,:\, U \in \bar{\mathcal{V}}_N\} = \mathbb{R}^n
\label{eqn:Horm}
\end{equ}
holds.
\end{assumption}

\subsection{Invertibility of the Malliavin Matrix}

It is well-known from the works of Malliavin, Bismut, Kusuoka, Stroock and others \cite{Mal76,Bismut81,KSAMI,KSAMII,KSAMIII,Norr:86,Mal97SA,Nual:06}
that one way of proving the smoothness of the law
of $X_T$ (at least when driven by Brownian motion) is to first show
the invertibility of the `reduced Malliavin matrix' \footnote{This is a slight misnomer since our SDE is driven by fractional Brownian motion,
rather than Brownian motion. One can actually rewrite the solution as a function of an underlying Brownian motion by making use of 
the representation \ref{e:defBtilde}, but the associated Malliavin matrix has a slightly more complicated relation to $C_T$ than usual.
This will be done in Theorem~\ref{theo:smooth} below.}:
\begin{equ}
\hat{C}_T \eqdef \A_T \A_T^* = \int_{0}^T J_{0,s}^{-1} V(X_s)\,V(X_s)^*(J_{0,s}^{-1})^*~ds\;,
\label{eqn:mallmatorg}
\end{equ}
where we defined the (random) operator: $\A_T : L^2([0,T],\R^d) \mapsto \mathbb{R}^n$ by
\begin{equ}
\A_T v  = \int_{0}^{T} J_{0,s}^{-1}\,V(X_s) \,v(s) \,ds\; .\label{eqn:opA}
\end{equ}
Unfortunately, for the stochastic control argument used in Section~\ref{sec:SF} below for the proof of the
strong Feller property, the invertibility of $\hat{C}_T$ turns out not to be sufficient.
We now define a family of smooth functions $h_T \colon [0,T] \to [0,1]$ :
\begin{equ}[eqn:hfun]
h_T(s) = 
\left\{\begin{array}{cl}
	1 & \text{if $s \le T/2$,} \\
	0 & \text{if $s \ge 3T/4$.}
\end{array}\right.
\end{equ}
and  consider the (random) matrix $C_T$ given by
\begin{equ}[e:modifMall]
C_T \eqdef  \A_T \,h_T \A_T^*\;.
\end{equ}
(See Remark \ref{rem:hintro} below for the reason behind introducing the function $h_T$ and the matrix $C_T$.)
Obviously, $C_T \le \hat{C}_T$ as positive definite matrices, so that invertibility of $C_T$
will in particular imply inertibility of $\hat{C}_T$, but the converse is of course not true in general. Define the matrix norm:
\begin{equs}
\| C_T\| = \sup_{ |\phi| = 1} \langle v, C_T v \rangle, \quad \phi \in \R^n \;.
\end{equs}
Since $C_T$ is a symmetric matrix, by the spectral theorem we have
\begin{equs}
\| C_T^{-1}\| = {1 \over \inf_{ |\phi| = 1} \langle v, C_T v \rangle }, \quad \phi \in \R^n \;.
\end{equs}
The aim of this section is to show that $C_T$ is invertible, and to obtain moment estimates for $\|C_T^{-1}\|$.
 The main result of this
section can be formulated in the following way:

\begin{theorem}\label{thm:mallmatest} 
Assume that the vector fields $\{V_i\}$ have bounded derivatives of all orders for $i \ge 0$ and are bounded
for $i > 0$. Assume furthermore that Assumption~\ref{ass:Hormander} holds, fix $T>0$ and let $C_T$ be as in \eref{e:modifMall}.
Then, for any $R>0$ and any $p \geq 1$, there exists a constant $M$ such that the bound
\begin{equs}
\tprob \Bigl(\inf_{\|\phi\| =1}\langle \phi, C_T \phi\rangle \leq \eps \Big)
\leq M \eps^p\;,\qquad \phi \in \mathbb{R}^n\;,
\end{equs}
holds for all $\eps \in (0,1]$ and for all $x_0$ and $\omega$ such that $|x_0| + \|\omega\|_{(\gamma,\delta)} \le R$.
Here,  $x_0$ is the initial condition of the SDE \eref{eqn:modsde}. 
\end{theorem}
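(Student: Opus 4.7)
The plan is to follow the Hörmander--Malliavin strategy adapted to fractional driving noise as in \cite{Baud:Hair:07}, with additional care devoted to the singular contribution coming from the conditional mean $m_t$. I argue by contradiction: suppose that $\tprob\bigl(\inf_{|\phi|=1}\langle\phi,C_T\phi\rangle\le\eps\bigr) > M\eps^p$ for some arbitrarily large $p$. By compactness of the unit sphere there is then a (measurable selection of a) unit vector $\phi\in\R^n$ for which
\begin{equ}
\int_0^T h_T(s)\sum_{i=1}^d Z_i(s)^2\,ds < \eps\;,\qquad Z_i(s):=\langle\phi,\,J_{0,s}^{-1}V_i(X_s)\rangle\;,
\end{equ}
since direct computation from \eref{e:modifMall} and \eref{eqn:opA} identifies $\langle\phi,C_T\phi\rangle$ with exactly this integral. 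A first step is to upgrade this $L^2$-smallness into $L^\infty$-smallness on $[0,T/2]$, the interval on which $h_T\equiv 1$. Lemmas~\ref{lem:nuexthol} and~\ref{lem:Jmom} ensure that the processes $Z_i$ are $\gamma$-Hölder with norms whose $\tprob$-distribution has Gaussian tails, so that a routine interpolation between $L^2$ and $\gamma$-Hölder yields $\|Z_i\|_{0,T/2,\infty}\le M\eps^{\kappa}$ with $\tprob$-probability greater than $1-O(\eps^p)$, for some $\kappa>0$ and an $M$ depending only on $R$.

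The main engine of the proof is then the propagation of smallness along iterated Lie brackets. Applying the rough-path chain rule via \eref{eqn:sflowinv} to $Z_i$ yields
\begin{equ}
Z_i(t) = Z_i(0) + \int_0^t\langle\phi, J_{0,s}^{-1}[V_0,V_i](X_s)\rangle\,ds + \sum_{j=1}^d\int_0^t\langle\phi,J_{0,s}^{-1}[V_j,V_i](X_s)\rangle\circ dB_s^j\;,
\end{equ}
so that the brackets $[V_j,V_i]$ for $j=0,\ldots,d$ appear as the coefficients of the equation solved by $Z_i$. Writing $B=\tilde B+m$ as in \eref{e:defBtilde}, one separates the $dB$-contribution into a Young integral against the fractional part $\tilde B$ and a Lebesgue integral with density $\dot m_s$. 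A Norris-type interpolation inequality for fBm, in the spirit of \cite{Baud:Hair:07}, asserts that if a process of this form has small supremum norm, then its $ds$-drift and its Young integrands must themselves be small, modulo events of high-order small $\eps$-probability. Applied to the $Z_i$ this promotes the smallness to $\langle\phi,J_{0,s}^{-1}[V_j,V_i](X_s)\rangle$ for $j=0,\ldots,d$ uniformly on most of $[0,T/2]$. Iterating up to the bracket depth $N$ given by Assumption~\ref{ass:Hormander}, and evaluating at $s=0$ where $J_{0,0}^{-1}=I$, one obtains $|\langle\phi,U(x_0)\rangle|\le M\eps^{\kappa'}$ for every $U\in\bar{\mathcal{V}}_N$. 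Hörmander's condition then forces $|\phi|\le M\eps^{\kappa''}$, contradicting $|\phi|=1$ for $\eps$ small enough.

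The principal obstacle that distinguishes the present conditional setting from the one-sided analysis of \cite{Baud:Hair:07} is the drift contribution $V(X_s)\,dm_s$: its density $\dot m_s$ diverges like $s^{H-1}$ at the origin, so it cannot be absorbed directly into the kind of Hölder-type drift that the Norris inequality can accommodate. The remedy is to exploit Lemma~\ref{lem:divct}, which guarantees that $f_\omega(s)=s\,\tfrac{d}{ds}\CG\omega(s)$ is itself $\gamma$-Hölder with norm controlled by $\|\omega\|_{(\gamma,\delta)}$. Performing the Norris-type interpolation only on an interval of the form $[\eps^\lambda,T/2]$ for a small exponent $\lambda>0$ confines the singularity to a negligible slice and introduces only a polynomial-in-$\eps$ loss, which is absorbed by choosing $p$ large. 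Smallness at $s=\eps^\lambda$ is finally transferred back to $s=0$ using the conditional Hölder regularity of $J_{0,s}^{-1}V_I(X_s)$ furnished by Lemma~\ref{lem:Jmom}, thereby closing the argument. The delicate point is clearly the quantitative interplay between $\lambda$, $\kappa$, $\kappa'$, and the polynomial degradation from $s^{H-1}$; getting these exponents to line up so that every step preserves an $\eps^p$ bound is where the technical heavy lifting will lie.
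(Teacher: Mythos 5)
Your proposal follows essentially the same strategy as the paper: argue by contradiction from a fixed unit vector $\phi$, upgrade the $L^2$-smallness of $\langle\phi,C_T\phi\rangle$ to $L^\infty$-smallness on $[0,T/2]$ via the interpolation Lemma~\ref{lem:inter} and the Jacobian moment bounds of Lemma~\ref{lem:Jmom}, propagate smallness up the Lie-bracket hierarchy using the Norris-type Proposition~\ref{lem:Norr}, restrict to intervals $[\eps^\alpha,T/2]$ to tame the $s^{H-1}$ singularity of $\dot m$ (controlled through Lemma~\ref{lem:divct}), and finally transfer the bound from $s=\eps^\alpha$ back to $s=0$ via H\"older continuity before invoking the quantitative H\"ormander bound. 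One caution about your reduction to a single $\phi$: rather than taking a random ``measurable selection'' of a minimising direction --- which would make the integrands in Norris' lemma anticipating --- the paper works with a \emph{deterministic} $\phi$ throughout the iteration, obtaining constants uniform over the sphere, and only passes to $\inf_{|\phi|=1}$ at the very end via the standard covering argument of Norris.
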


\begin{corollary}
The matrix $C_T$ is almost surely invertible and, for any $R>0$
and any $p \geq 1$, there exists a constant $M$ such that 
 $$\tE  \|C_T^{-p}\| < M\;,$$
uniformly over all $x_0$ and $\omega$ such that $|x_0| + \| \omega\|_{(\gamma,\delta)} \le R$.
\end{corollary}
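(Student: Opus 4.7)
The plan is to deduce the corollary as a direct consequence of Theorem~\ref{thm:mallmatest} by a standard layer-cake argument, taking advantage of the fact that the constant in the theorem depends on $(x_0,\omega)$ only through the bound $R$. Since $C_T$ is symmetric and positive semi-definite, its operator norm satisfies
\begin{equ}
\|C_T^{-1}\| = \frac{1}{\inf_{|\phi|=1}\langle \phi, C_T \phi\rangle}\;,
\end{equ}
interpreted as $+\infty$ on the (possibly empty) event $\{\inf_{|\phi|=1}\langle \phi, C_T\phi\rangle = 0\}$. Applying Theorem~\ref{thm:mallmatest} with, say, $p=1$ and letting $\eps \downarrow 0$ immediately gives $\tprob\bigl(\inf_{|\phi|=1}\langle\phi, C_T\phi\rangle = 0\bigr) = 0$, so $C_T$ is $\tprob$-almost surely invertible, and in particular $\|C_T^{-p}\| = \bigl(\inf_{|\phi|=1}\langle\phi,C_T\phi\rangle\bigr)^{-p}$ is almost surely finite.

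For the moment bound, I would write $\lambda(\omega) \eqdef \inf_{|\phi|=1}\langle\phi,C_T\phi\rangle$ and use the layer-cake formula
\begin{equ}
\tE \|C_T^{-p}\| = \tE \lambda^{-p} = \int_0^\infty p\,u^{-p-1}\,\tprob(\lambda < u)\,du\;,
\end{equ}
splitting at $u=1$. For $u \ge 1$ the probability is bounded by $1$, so that portion contributes at most $\int_1^\infty p u^{-p-1}\,du = 1$. For $u \in (0,1]$, Theorem~\ref{thm:mallmatest} applied with exponent $q = p+1$ yields a constant $M_{p+1}$, depending only on $R$ and $p$, such that $\tprob(\lambda < u) \le M_{p+1}\,u^{p+1}$. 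The corresponding integral is then $\int_0^1 p\,M_{p+1}\,u^{q-p-1}\,du = p\,M_{p+1}$, which is finite. Adding the two contributions gives a bound on $\tE \|C_T^{-p}\|$ depending only on $R$ and $p$, and the uniformity over $\{|x_0| + \|\omega\|_{(\gamma,\delta)} \le R\}$ is inherited directly from the corresponding uniformity in Theorem~\ref{thm:mallmatest}.

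There is no substantive obstacle here: all of the difficulty has already been packaged into Theorem~\ref{thm:mallmatest}, and the corollary is simply the observation that the theorem's arbitrary polynomial-in-$\eps$ decay of the small-ball probability is more than enough to counteract the polynomial-in-$u$ singularity $u^{-p-1}$ produced by the layer-cake representation for any fixed $p \ge 1$.
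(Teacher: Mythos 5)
Your proof is correct and follows essentially the same route as the paper's: both rewrite $\|C_T^{-1}\|$ as the reciprocal of the smallest eigenvalue of $C_T$, apply the layer-cake formula, and invoke Theorem~\ref{thm:mallmatest} with a slightly higher exponent (you use $p+1$, as does the paper) to make the small-$u$ (equivalently large-$x$) tail integrable. Your write-up is in fact a bit cleaner, since it spells out the split at $u=1$ and records the almost-sure invertibility explicitly; the paper compresses these steps and contains a typo writing $C_T^{-1}$ where $C_T$ is meant inside the infimum.
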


\begin{proof}
Notice that for $p \geq 1$, we have $\|C_T^{-p}\| = \|C_T^{-1}\|^p$ since $C_T$ is symmetric an positive definite.
Thus
\begin{equs}
\tE \|C_T^{-p}\| &= p\int_{0}^{\infty} x^{p-1} \P(\|C^{-1}_T\| \geq x) \,dx \\
&= p\int_{0}^{\infty} x^{p-1} \tprob \Big(\inf_{\|\phi\|=1} \langle \phi, C^{-1}_T \phi \rangle \leq { 1 \over x} \Big) \,dx < \infty\;,
\end{equs} 
where the last inequality follows from Theorem \ref{thm:mallmatest}, since  for $x > 1$ there
exists a constant $M$ (depending only on $R$) such that 
$\tprob \big(\inf_{\|\phi\|=1} \langle \phi, C^{-1}_T \phi \rangle \leq {1\over x} \big) \leq M x^{-1-p}$
and we are done.
\end{proof}

Before proceeding further, we need the following version of Norris' lemma
\cite{KSAMII,Norr:86}
adapted from \cite{Baud:Hair:07} which will be the key technical
estimate needed for showing the invertibility of $C_T$. In the case of white driving noise, Norris' lemma
essentially states that if a semimartingale is small and if one has some 
\textit{a priori} bounds on its components, then both the bounded
variation part and the martingale part have to be small.
In this sense, it can be viewed as a quantitative version of the Doob-Meyer semimartingale 
decomposition theorem. One version of this result for fractional driving noise is given by:
\begin{proposition}\label{lem:Norr}
Let $H \in ({1\over 2}, 1)$ and $\gamma \in ({1\over 2}, H)$ and let $\tilde B$ be as in \eref{e:defBtilde}. 
Then there exist exponents $q>0$ and $\eta > 0$ such that the following statement holds.

Consider any two families $a^\eps$, $b^\eps$ of processes taking values in
 $\mathbb{R}$ and $\R^{d}$ respectively such that 
$\mathbb{E}(\|a^\eps\|^p_{0,T, \gamma} + \|b^\eps\|^p_{0,T,\gamma}) < 
M_p\eps^{-\eta p}$, uniformly in $\eps \in (0,1]$,
 for some $M_p >0$ and for every $p \geq 1$. Let $y^\eps$ be the process defined by
\begin{equ}
y_t^\eps = \int_{0}^t a_s^\eps\, ds + \int_{0}^t  b_s^\eps \circ d\tilde{B}_s \;,\qquad t \in [0,T]\;.
\end{equ}
Then, the estimate
 \begin{equ}
\mathbb{P}\Big(\|y\|_{0,T,\infty} < \eps ~\mbox{and}  ~\|a\|_{0,T,\infty} + \|b\|_{0,T,\infty}
> \eps^q\Big) < C_p \, \eps^p
\end{equ}
holds for every $p > 0$, uniformly for $\eps \in (0,1]$.
\end{proposition}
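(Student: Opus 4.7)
The plan is to adapt the fractional Norris-type lemma of \cite{Baud:Hair:07} to the Volterra process $\tilde B$ defined in \eref{e:defBtilde}. The overall skeleton is the same: combine interpolation between $L^\infty$ and H\"older norms with a quantitative local non-degeneracy (small-ball) estimate for the driving noise.

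First I would localise onto a deterministic event. By Chebyshev combined with the moment hypothesis on $a^\eps,b^\eps$ and Fernique's theorem applied to $\tilde B$, for every $p$ one has, outside an event of probability $\le C_p \eps^p$, the pathwise bound
\begin{equ}
\|a^\eps\|_{0,T,\gamma} + \|b^\eps\|_{0,T,\gamma} + \|\tilde B\|_{0,T,\gamma} \le \eps^{-2\eta}\;.
\end{equ}
On the complementary event all estimates below become purely pathwise, and standard interpolation between $L^\infty$ and $\gamma$-H\"older yields $\|y\|_{0,T,\gamma'} \le C\eps^{\kappa}$ on every intermediate scale $\gamma'<\gamma$ as soon as $\|y\|_{0,T,\infty}<\eps$.

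The core of the argument is a contradiction showing that $\|b^\eps\|_{0,T,\infty}\le \eps^{q}$ on the localised event as soon as $\|y\|_{0,T,\infty}<\eps$. If there were $s_\star\in[0,T]$ with $|b^\eps_{s_\star}|\ge \eps^q$, the Taylor-type expansion
\begin{equ}
y^\eps_{s_\star+h} - y^\eps_{s_\star} = \int_{s_\star}^{s_\star+h} a^\eps_r\,dr + b^\eps_{s_\star}\bigl(\tilde B_{s_\star+h}-\tilde B_{s_\star}\bigr) + \int_{s_\star}^{s_\star+h}\bigl(b^\eps_r - b^\eps_{s_\star}\bigr)\circ d\tilde B_r\;,
\end{equ}
together with the Young bound \eref{e:boundprod} on the last integral, yields for $h=\eps^\beta$ with $\beta$ chosen appropriately
\begin{equ}
\sup_{t\in[s_\star,s_\star+h]}|\tilde B_t - \tilde B_{s_\star}| \le \eps^{q'}h^H
\end{equ}
for some $q'>0$. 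A uniform small-ball estimate of the form $\P\bigl(\sup_{t\in[s,s+h]}|\tilde B_t-\tilde B_s|\le \delta h^H\bigr)\le C\exp(-c/\delta^2)$, combined with a union bound over $O(h^{-1})$ grid points $s_\star$, bounds the probability of such an event by a quantity exponentially small in a negative power of $\eps$, contradicting its occurrence up to a polynomial loss.

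Once $\|b^\eps\|_{0,T,\infty}\le \eps^q$ has been established, the stochastic integral $\int_0^\cdot b^\eps\circ d\tilde B$ is itself of order $\eps^{q''}$ in $L^\infty$ (by another application of \eref{e:boundprod}), so $\|\int_0^\cdot a^\eps\,ds\|_{0,T,\infty}\le \eps^{q''}$ follows from $\|y\|_{0,T,\infty}<\eps$; combining this with the H\"older control on $a^\eps$ produces $\|a^\eps\|_{0,T,\infty}\le \eps^{q}$ after possibly decreasing $q$. The main obstacle, and the only place where the argument genuinely departs from \cite{Baud:Hair:07}, is the uniform small-ball estimate for $\tilde B$: unlike two-sided fBm, $\tilde B$ is not stationary (its Volterra kernel is anchored at $0$), so one cannot quote the classical estimate as a black box. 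What needs to be verified is that $\tilde B$ satisfies a strong local non-determinism property on $[0,T]$, which can be read off from the explicit representation $\tilde B_t=\alpha_H\int_0^t(t-r)^{H-1/2}\,dW_r$ by computing the conditional variance of its increments.
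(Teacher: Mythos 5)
Your strategy is correct and your diagnosis of the real issue — that the only genuinely new ingredient, compared with \cite{Baud:Hair:07}, is a quantitative non-degeneracy estimate for the Volterra process $\tilde B$ — is exactly the right one. But you then set out to re-derive the fractional Norris lemma from scratch (localisation, interpolation, contradiction via a Taylor expansion, then a small-ball/local-non-determinism estimate for $\tilde B$), whereas the paper takes a shorter route: it computes $f(s,t)=\E|\tilde B(t)-\tilde B(s)|^2$ and $\partial_s\partial_t f(s,t)$ in closed form, checks that they satisfy the ``type $H$'' covariance bounds of \cite[Eqn~3.5]{Baud:Hair:07} uniformly in $s,t$, and then cites \cite[Prop~3.4]{Baud:Hair:07} as a black box. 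The hypotheses you propose to verify by hand (strong local non-determinism) and what the paper verifies (the type-$H$ condition on the increment variance and its mixed second derivative) are in the same circle of ideas, and for this particular Gaussian process both can be read off from the formula
\begin{equ}
f(s,t) = |t-s|^{2H}\Bigl(\tfrac{1}{2H-1} + \int_0^{s/|t-s|}\bigl((1+x)^{H-1/2}-x^{H-1/2}\bigr)^2\,dx\Bigr)\;,
\end{equ}
which makes the non-stationarity of $\tilde B$ harmless. What the paper's route buys is that everything downstream of the covariance computation (the interpolation, the grid/union-bound argument, the small-ball estimate) is already packaged in \cite{Baud:Hair:07}, so the final step reduces to a single Chebyshev bound $\P(\|a\|_\gamma^2+\|b\|_\gamma^2>\eps^{-\beta})\le C_p\eps^p$ (which fixes $\eta<\beta/2$). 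Two small inaccuracies in your sketch, neither fatal: (i) the small-ball rate for a Gaussian process of local H\"older regularity $H$ is $\exp(-c\,\delta^{-1/H})$, not $\exp(-c/\delta^2)$, so your displayed estimate is stronger than what is actually true (though the weaker true bound still decays superpolynomially and suffices); (ii) you leave the local non-determinism verification as ``can be read off from the explicit representation'', which is precisely the content of the paper's calculation of $f$ and $\partial_s\partial_t f$ — that is the step you would need to carry out in detail rather than defer.
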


\begin{proof} 
Setting $f(s,t) = \E |\tilde B(t) - \tilde B(s)|^2$ as in \cite{Baud:Hair:07}, we have
\begin{equ}
f(s,t) = {t^{2H}\over 2H-1} + {s^{2H}\over 2H-1} - 2 \int_0^s (t-r)^{H-{1\over 2}}(s-r)^{H-{1\over 2}}\,dr\;.
\end{equ}
(We use the convention $s < t$.) A somewhat lengthy but straightforward calculation shows that
this can be rewritten as
\begin{equ}
f(s,t) = |t-s|^{2H} \Bigl({1\over 2H-1} + \int_0^{s  / |t-s|} \bigl((1+x)^{H-{1\over 2}} - x^{H-{1\over 2}}\bigr)^2\,dx\Bigr)\;.
\end{equ}
Similarly, one has
\begin{equ}
\d_s\d_t f(s,t) = -2 \bigl(H - {\textstyle{1\over 2}}\bigr)^2 |t-s|^{2H-2}\int_0^{s  / |t-s|} (1+x)^{H-{3\over 2}} x^{H-{3\over 2}}\,dx\;.
\end{equ}
Since the integrands of both of the integrals appearing in these expressions decay like $x^{2H-3}$, they are integrable so 
that $\tilde B$ is indeed a Gaussian process of `type $H$', see \cite[Eqn~3.5]{Baud:Hair:07}.

The proof is now a slight modification of that of  \cite[Prop~3.4]{Baud:Hair:07}. 
It follows from  \cite[Equ~3.18]{Baud:Hair:07} and the last displayed equation in the proof
 of \cite[Prop~3.4]{Baud:Hair:07} that there exist exponents $q$ and $\beta$ such that the bound
\begin{equ}
\mathbb{P}\Big(\|y\|_{0,T,\infty} < \eps ~\mbox{and}  ~\|a\|_{0,T,\infty} + \|b\|_{0,T,\infty}
> \eps^q\Big) \le C_p \eps^p + \P \bigl(\|b\|_\gamma^2 + \|a\|_\gamma^2 > \eps^{-\beta}\bigr)
\end{equ}
holds. The claim now follows from a simple application of Chebychev's inequality, provided
that we ensure that $\eta$ is such that $2\eta < \beta$.
\end{proof}
For making further calculations easier, we now introduce some definitions that are inspired by the proofs
in \cite{Hair:Matt:09}.
\begin{definition}
A family of events $A \equiv \{A_\eps\}_{\eps \leq 1}$  of a probability space is
called a ``family of negligible events'' if, for every $p \geq 1$ there exists a constant $C_p$ such
that $\mathbb{P}(A_\eps) \leq C_p \eps^p$ for every $\eps \leq 1$.
\end{definition}
\begin{definition}
Given a statement  $\Phi_\eps$, we will say that ``$\,\Phi_\eps$ holds modulo negligible events'',
if there exists a family $\{A_\eps\}$ of negligible events such that,
for every $\eps \leq 1$, the statement $\Phi_\eps$ holds on the complement 
of $A_\eps$.
\end{definition}
With the above definitions, Theorem \ref{thm:mallmatest} can
be restated as:
\begin{theorem}\label{thm:boundC}
Fix $R>0$. Then, modulo negligible events, the bound
$$\inf_{\|\phi\| = 1}\langle \phi, C_T\phi\rangle \ge \eps$$
holds for all $x_0$ and $\omega$ such that $|x_0| + \|\omega\|_{(\gamma,\delta)} \le R$.
\end{theorem}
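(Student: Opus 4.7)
The plan is to reduce to controlling, for each fixed unit vector $\phi\in\R^n$, the probability that $\langle\phi,C_T\phi\rangle<\eps$, and then to upgrade uniformity in $\phi$ via a standard $\eps$-net argument on the sphere. The net argument uses that, by Lemma~\ref{lem:nuexthol} and Lemma~\ref{lem:Jmom}, the operator norm $\|C_T\|$ has all moments bounded uniformly on $\{|x_0|+\|\omega\|_{(\gamma,\delta)}\le R\}$, so the Lipschitz dependence $|\langle\phi,C_T\phi\rangle-\langle\phi',C_T\phi'\rangle|\le 2\|C_T\|\,|\phi-\phi'|$ lets a net of size polynomial in $\eps^{-1}$ dominate the infimum. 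Given such a $\phi$, the hypothesis reads
\[
\sum_{i=1}^d\int_0^T h_T(s)\,\langle\phi,J_{0,s}^{-1}V_i(X_s)\rangle^2\,ds < \eps\;,
\]
and since $h_T\equiv 1$ on $[0,T/2]$, the processes $\Psi_t^{(V_i)}:=\langle\phi,J_{0,t}^{-1}V_i(X_t)\rangle$ are small in $L^2([0,T/2])$. Combined with the $\gamma$-H\"older moment bounds inherited from Lemmas~\ref{lem:nuexthol}--\ref{lem:Jmom} and Corollary~\ref{cor:secondvariation}, a standard interpolation between $L^2$ and the $\gamma$-H\"older norm yields $\|\Psi^{(V_i)}\|_{0,T/2,\infty}\le\eps^{q_0}$ for some $q_0>0$, modulo negligible events.

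The heart of the proof is an iterative use of Proposition~\ref{lem:Norr} to propagate smallness of $\Psi^{(U)}$ through the tower of Lie brackets. For any smooth vector field $U$, a direct Stratonovich computation using \eref{eqn:sdedem}--\eref{eqn:sflowinv} gives
\[
d\Psi_t^{(U)} = \Psi_t^{([V_0,U])}\,dt + \sum_{j=1}^d \Psi_t^{([V_j,U])}\circ dB_t^j\;,
\]
which via $B=\tilde B+m$ (cf.\ \eref{e:defBtilde}) becomes
\[
d\Psi_t^{(U)} = \Bigl(\Psi_t^{([V_0,U])}+\sum_{j=1}^d \Psi_t^{([V_j,U])}\,\dot m_t^j\Bigr)\,dt + \sum_{j=1}^d \Psi_t^{([V_j,U])}\circ d\tilde B_t^j\;.
\]
Given smallness of $\|\Psi^{(U)}\|_{\infty}$ on some subinterval $I\subset[0,T/2]$ bounded away from $0$, Proposition~\ref{lem:Norr} applies on $I$ with $a=\Psi^{([V_0,U])}+\sum_j\Psi^{([V_j,U])}\dot m^j$ and $b_j=\Psi^{([V_j,U])}$, since the required polynomial H\"older-norm moments of $a$ and $b$ follow from Lemma~\ref{lem:Jmom}, Corollary~\ref{cor:secondvariation}, and the bound on $\dot m$ on $I$ coming from Lemma~\ref{lem:divct}. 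Consequently, modulo negligible events, each $\|\Psi^{([V_j,U])}\|_{\infty}$ is bounded by a positive power of $\eps$ from the diffusion side, and substituting this back into the drift gives the same smallness for $\Psi^{([V_0,U])}$. Iterating at most $N$ times, where $N$ can be chosen uniformly on $\{|x_0|\le R\}$ by compactness and the fact that Assumption~\ref{ass:Hormander} holds everywhere, and shrinking $I$ by a fixed amount at each step, one obtains on some interval $[t_*,T/2]$ the estimate $\|\Psi^{(U)}\|_{t_*,T/2,\infty}\le\eps^{q_N}$ for every $U\in\bar{\mathcal{V}}_N$ with a new positive exponent $q_N$, modulo negligible events.

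To conclude, fix any $t^*\in[t_*,T/2]$ and set $\psi=(J_{0,t^*}^{-1})^\ast\phi$; the previous step yields $|\langle\psi,U(X_{t^*})\rangle|\le\eps^{q_N}$ for every $U\in\bar{\mathcal{V}}_N$. Since the a priori bounds of Lemma~\ref{lem:nuexthol} keep $X_{t^*}$ inside a ball of random but polynomially integrable radius, and since Assumption~\ref{ass:Hormander} together with the continuity of the vector fields gives, uniformly on compact sets in $x$, a positive lower bound on the least singular value of the matrix built from $\{U(x):U\in\bar{\mathcal{V}}_N\}$, this forces $|\psi|\le C\eps^{q_N}$ modulo negligible events. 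Invoking the moment bound on $\|J_{0,t^*}\|$ from Lemma~\ref{lem:Jmom} then gives $|\phi|\le C'\eps^{q'}$ for some $q'>0$, contradicting $|\phi|=1$ once $\eps$ is small enough. Converting this contradiction into a probability bound and absorbing the $\eps$-net union yields the claimed inequality $\tprob(\inf_{|\phi|=1}\langle\phi,C_T\phi\rangle<\eps)\le M\eps^p$.

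The main obstacle I expect is the singularity $\dot m_t\sim t^{\gamma-1}$ near $t=0$ (recall Lemma~\ref{lem:divct}: only $f_\omega(t)=t\dot m_t$ is H\"older, with $f_\omega(0)=0$), which prevents a direct uniform bound on $\dot m$ over $[0,T/2]$ and forces the iteration to be carried out on a strictly smaller interval $[t_*,T/2]$. Tracking the shrinkage of this interval and the deterioration of the exponents $q_k$ at each of the $N$ iterations, while keeping all constants uniform in $\omega$ on $\{\|\omega\|_{(\gamma,\delta)}\le R\}$, is where the real technical work lies.
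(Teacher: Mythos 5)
Your overall architecture (covering argument to reduce to a fixed $\phi$, interpolation between $L^2$ and H\"older to upgrade $\langle\phi,C_T\phi\rangle<\eps$ to an $L^\infty$ bound on $\langle\phi,J_{0,\cdot}^{-1}V_i(X_\cdot)\rangle$, and iterated application of Norris' lemma on the decomposition $d\Psi^{(U)} = (\ldots)\,dt + \sum_j\Psi^{([V_j,U])}\circ d\tilde B^j$) is the paper's argument. The divergence — and the gap — is in how you handle the singularity of $\dot m_t$ at $t=0$, and where you finally invoke H\"ormander's condition.

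You work on a fixed interval $[t_*,T/2]$ with $t_*>0$ independent of $\eps$, and at the end you invoke H\"ormander at the random point $X_{t^*}$, arguing that the least singular value of the bracket matrix is bounded below ``uniformly on compact sets in $x$'' while $X_{t^*}$ sits in ``a ball of random but polynomially integrable radius.'' These two facts do not combine into a useful bound. Assumption~\ref{ass:Hormander} is purely pointwise; together with smoothness it gives, via compactness, a \emph{deterministic} lower bound $\lambda_*(L) := \inf_{|x|\le L}\inf_{|\xi|=1}\sum_{U\in\bar\CV_N}\langle\xi,U(x)\rangle^2 > 0$ for each fixed $L$, but says nothing about how $\lambda_*(L)$ decays as $L\to\infty$ (it can decay faster than any polynomial even for bounded vector fields — take a bracket whose norm behaves like $e^{-|x|^2}$). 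Your estimate gives $\lambda_*(|X_{t^*}|)\,|\psi|^2 \le \eps^{2q_N}$ modulo negligible events, but $|X_{t^*}|$ is only controlled up to $\eps^{-\beta}$ (from moment bounds and Chebyshev), so you would need $\lambda_*(\eps^{-\beta})\gtrsim\eps^{\text{poly}}$, which is not implied by Assumption~\ref{ass:Hormander}. Conversely, restricting to the event $\{|X_{t^*}|\le L\}$ for fixed $L$ leaves a complement of fixed positive probability, which does not fit the ``negligible events'' conclusion $\P(\cdot)\le C_p\eps^p$ for all $p$.

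The paper's resolution is to let the left endpoint go to zero with $\eps$: one works on $[\eps^\alpha,T/2]$, where $\|\dot m\|_{\eps^\alpha,T/2,\infty}\lesssim\eps^{\alpha(H-1)}$, so each Norris step costs a controlled exponent $\alpha(H-1)$ (see Lemma~\ref{lem:hormit2}), and after $N$ steps the total exponent is $q_1 = r\bar q^N + \alpha(H-1)\frac{1-(r\bar q)^N}{1-r\bar q}$, which can be made positive by taking $\alpha$ small. The payoff is that the H\"older continuity of $t\mapsto J_{0,t}^{-1}U(X_t)$ gives $\|J_{0,\cdot}^{-1}U(X_\cdot)-U(x_0)\|_{0,\eps^\alpha,\infty}\le\eps^{q_2}$ with $q_2<\alpha\gamma$, which transfers the smallness estimate to the \emph{deterministic} initial point $x_0$ with $|x_0|\le R$; there the compactness bound \eref{e:boundbelow} is deterministic and uniform, and the contradiction goes through cleanly. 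Your fixed $t_*$ prevents this transfer (since $\|J_{0,\cdot}^{-1}U(X_\cdot)-U(x_0)\|_{0,t_*,\infty}$ does not vanish as $\eps\to 0$), which is exactly what forces you into the problematic evaluation at $X_{t^*}$. As a minor side remark, the iteration does not require shrinking the interval at each step — the paper holds $[\eps^\alpha,T/2]$ fixed throughout the $N$ Norris applications; the cost is paid in the exponent, not the interval length.
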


\begin{remark}
The family of events in question can (and will in general) depend on $R$, but once $R$ is fixed the bounds
are uniform over $|x_0| + \|\omega\|_{(\gamma,\delta)} \le R$. While this uniformity is not really required
in this section, it will be crucial in Section~\ref{sec:SF}.
\end{remark}

\begin{proof}[of Theorem~\ref{thm:boundC}]
It suffices to show that for any fixed $\phi$ with $\|\phi\| = 1$, the bound $\langle \phi, C_T\phi\rangle \ge \eps $ 
holds modulo a family of negligible events. The conclusion then follows by a standard
covering argument, see for example \cite[p.~127]{Norr:86}.
Furthermore, throughout all of this proof, we will denote by $M$ a constant that can change from one expression
to the next and only depends on $R$
and on the vector fields $V$ defining the problem.

Fix now such a $\phi$ and assume ``by contradiction'' that
$\langle \phi, C_T \phi \rangle < \eps$. 
We will show 
that this assumption leads to
the conclusion that
\begin{equs}
\sup_{U \in \bar{\mathcal{V}}_N} |\langle \phi, U(x_0)\rangle | 
&< \eps^{q'}\;,  \quad\hbox{for some $q'>0$,}
\end{equs}
modulo negligible events. However, if H\"{o}rmander's condition is satisfied,
the conclusion above cannot hold since $\{U(x_0), U \in \bar{\mathcal{V}}_N\}$
span $\mathbb{R}^n$ so that there exists a constant $c$ such that $\sup_{U \in \bar{\mathcal{V}}_N} |\langle \phi, U(x_0)\rangle | \ge c$ almost surely,
thus leading to a contradiction and concluding the proof.
For any smooth vector field $U$,
 let us introduce the process $Z_U(t) =  J_{0,t}^{-1} U(X_t)$. With this notation, we have
  
\begin{lemma} \label{lem:hormit1} There exists $r > 0$ such that, modulo negligible events,
the implication
\begin{equ}
\langle \phi, C_T \phi \rangle  < \eps \quad\Rightarrow\quad
\|\langle \phi, Z_U(\cdot)\rangle \|_{0,T/2,\infty} < \eps^r
\end{equ} 
holds for all $U \in \mathcal{V}_1$.
\end{lemma}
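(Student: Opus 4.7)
The plan is to translate the quadratic-form bound $\langle \phi, C_T\phi\rangle < \eps$ into an $L^2$-smallness statement for $\langle\phi, Z_{V_i}(\cdot)\rangle$ on $[0,T/2]$, and then upgrade this to sup-norm smallness via an interpolation inequality using the H\"older regularity we already control.

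First, unfolding the definition of $C_T = \A_T h_T \A_T^*$ gives
\begin{equ}
\langle \phi, C_T\phi\rangle = \int_0^T h_T(s) \sum_{i=1}^d \langle \phi, J_{0,s}^{-1} V_i(X_s)\rangle^2\,ds = \int_0^T h_T(s) \sum_{i=1}^d \langle \phi, Z_{V_i}(s)\rangle^2\,ds\;.
\end{equ}
Since $h_T \equiv 1$ on $[0,T/2]$, the hypothesis $\langle \phi, C_T\phi\rangle < \eps$ forces
\begin{equ}
\int_0^{T/2} \langle \phi, Z_{V_i}(s)\rangle^2\,ds < \eps \qquad\text{for each } i = 1,\ldots,d\;.
\end{equ}

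Next, I would record the H\"older regularity of the maps $s \mapsto Z_{V_i}(s) = J_{0,s}^{-1} V_i(X_s)$. Since the $V_i$ are smooth with bounded derivatives, $\|V_i(X_\cdot)\|_{0,T,\gamma} \le M(1+\|X\|_{0,T,\gamma})$, and combining the bound \eref{eqn:nuhulkbd1} of Lemma~\ref{lem:nuexthol} for $X$, the Jacobian bound of Lemma~\ref{lem:Jmom} for $J^{-1}$, and the product estimate \eref{e:boundprod} in its Lipschitz form for multiplying H\"older functions yields
\begin{equ}
\tE\,\|Z_{V_i}\|_{0,T,\gamma}^p \le M_p \exp\bigl(M(1+\|m\|_{0,T,\gamma})^{1/\gamma}\bigr)\;.
\end{equ}
Under the standing assumption $|x_0| + \|\omega\|_{(\gamma,\delta)} \le R$, boundedness of $\CG$ bounds $\|m\|_{0,T,\gamma}$ in terms of $R$, so these moments are finite uniformly over the relevant region. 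By Chebyshev, $\|Z_{V_i}\|_{0,T,\gamma} \le \eps^{-\kappa}$ modulo negligible events, for any fixed $\kappa > 0$ chosen below.

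The remaining step is a standard interpolation: for any $\gamma$-H\"older continuous $f\colon[0,T/2]\to\R$, if $|f(t_0)| = A$ at some point $t_0$ then $|f(t)| \ge A/2$ on an interval of length at least $(A/(2\|f\|_\gamma))^{1/\gamma}$ intersected with $[0,T/2]$, whence
\begin{equ}
\|f\|_{L^2(0,T/2)}^2 \;\gtrsim\; A^{2+1/\gamma}\, \|f\|_\gamma^{-1/\gamma}\;,
\end{equ}
so $\|f\|_{0,T/2,\infty} \lesssim \|f\|_{L^2}^{2\gamma/(2\gamma+1)} \|f\|_\gamma^{1/(2\gamma+1)}$. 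Applying this to $f = \langle\phi, Z_{V_i}(\cdot)\rangle$ with $\|f\|_{L^2}^2 < \eps$ and $\|f\|_\gamma \le \eps^{-\kappa}$ yields
\begin{equ}
\|\langle \phi, Z_{V_i}(\cdot)\rangle\|_{0,T/2,\infty} \;\le\; M\,\eps^{\gamma/(2\gamma+1) - \kappa/(2\gamma+1)}
\end{equ}
modulo negligible events. Choosing $\kappa$ small enough that the exponent is positive and calling this exponent $r$ gives the desired conclusion.

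The only real obstacle is making the H\"older moment bound uniform in $(x_0,\omega)$ over the set $|x_0|+\|\omega\|_{(\gamma,\delta)} \le R$, but the dependence of the exponential bound in Lemma~\ref{lem:Jmom} is through $\|m\|_{0,T,\gamma}$, which is controlled by $R$ via continuity of $\CG$. All other steps are elementary; in particular the exponent $r$ depends only on $\gamma$ and the chosen $\kappa$, not on $\phi$, so the covering argument alluded to in the proof of Theorem~\ref{thm:boundC} applies without change.
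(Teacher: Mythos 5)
Your argument is correct and follows essentially the same route as the paper: lower-bound $\langle\phi,C_T\phi\rangle$ by the $L^2$ norm of $\langle\phi,Z_{V_i}(\cdot)\rangle$ on $[0,T/2]$ using $h_T\equiv 1$ there, obtain a polynomial a priori bound on the H\"older norm via Lemma~\ref{lem:Jmom} and Chebyshev modulo negligible events, and then interpolate between the $L^2$ and H\"older norms to control the sup norm. The interpolation estimate you re-derive inline is precisely Lemma~\ref{lem:inter} of the appendix; your sketch of it silently drops the $T^{-1/2}\|f\|_{L^2}$ branch (needed when the level-set interval overshoots $[0,T/2]$), but that branch yields an even better power of $\eps$, so nothing is lost, and keeping the Chebyshev exponent $\kappa$ as a free parameter rather than fixing it at $1/2$ as in the paper is a cosmetic difference.
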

\begin{proof} By the definition of $h$, we have the bound
\begin{equ}
\langle \phi, C_T \phi \rangle = \sum_{i=1}^d\int_{0}^T \langle \phi, J_{0,s}^{-1} V_i(X_s)\rangle ^2 \,h_T(s) \,ds \label{eqn:interpref} \ge
 \sum_{i=1}^d\int_{0}^{T/2} \langle \phi, J_{0,s}^{-1} V_i(X_s)\rangle ^2 ds\;.
\end{equ}
We now wish to turn this into a lower bound of order $\eps^r$ (for some $r>0$)
on the supremum of the integrand. Our main tool for this is the bound
\begin{equ}
\|f\|_{0,T, \infty} \leq 2\max \Big(T^{-1/2}\|f\|_{L^2[0,T]} ,
\,\|f\|_{L^2[0,T]}^{{2\gamma \over 2\gamma +1 }} \,
\|f\|_{0,T,\gamma}^{{1 \over 2\gamma +1}} \Big)\;,
\end{equ}
which holds for every $\gamma$-H\"older continuous function $f : [0, T ] \mapsto \R$. 
The proof of this is given in the appendix as Lemma~\ref{lem:inter}.

By  Lemma \ref{lem:Jmom} and
Chebyshev's  inequality it follows that
 $\|\langle \phi, J^{-1}_{0,\cdot} V(X_\cdot)\rangle\|_{0,T,\gamma} < \eps^{-1/2}$ modulo negligible events. Thus if  $\langle \phi, C_T \phi \rangle < \eps$, from Lemma \ref{lem:inter} and
\eref{eqn:interpref} we deduce that, for 
each $1\leq j \leq d$ 
\begin{equ}
\|\langle \phi, J_{0,\cdot}^{-1} V_j(X_\cdot)\rangle \|_{0,T/2,\infty} < M\eps^{{4\gamma -1 \over 2( 2\gamma+1)}}\;,
\label{eqn:interpest}
\end{equ}
modulo negligible events. The claim follows by choosing $r < {4\gamma -1 \over 2( 2\gamma+1)}$.
\end{proof}
We will now use an induction argument to show that a similar bound holds with $V_j$ replaced by any
vector field obtained by taking finitely many Lie brackets between the $V_j$'s. Whilst the
gist of the argument is standard, see \cite{KSAMII,Norr:86,Baud:Hair:07}, 
a technical difficulty arises from the divergence of the derivative of $m(t)$ 
at $t = 0$.  To surmount this difficulty, we take inspiration from \cite{Hair:Matt:09} and introduce a (small) parameter
 $\alpha >0$ to be specified at the end of the iteration. Then, by Lemma \ref{lem:divct}, we have
\begin{equs}[eqn:mcest]
\Bigl\|{dm_{\cdot} \over dt}\Bigr\|_{\eps^{\alpha},T/2,\infty} \leq M\eps^{\alpha(H-1)}\;.
\end{equs}
Instead of considering supremums over the time
 interval $[0,T/2]$, we will now consider supremums over $[\eps^\alpha, T/2]$ instead, thus avoiding the singularity at zero.
 
In order to formalise this, for $r>0$ and $i \ge 1$, define the events
\begin{equ}
\K_i^\eps(\alpha,r) = \bigl\{\|\langle \phi, Z_U(\cdot)\rangle \|_{\eps^\alpha,T/2,\infty} < \eps^{r} \;,\quad \forall\; U \in \mathcal{V}_i\bigr\}\;.
\end{equ}
With this notation at hand, the recursion step in our argument can be formulated as follows:
 
\begin{lemma}\label{lem:hormit2}
There exists $\bar q>0$ such that, for every $\alpha > 0$ and every $i \ge 1$, the implication
\begin{equ}
\K_i^\eps(\alpha,r) \quad\Rightarrow\quad \K_{i+1}^\eps\bigl(\alpha,\bar qr + \alpha(H-1)\bigr)\;,
\end{equ}
holds modulo negligible events. 
\end{lemma}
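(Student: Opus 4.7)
The plan is to express $\langle\phi, Z_U(\cdot)\rangle$ for every $U \in \mathcal{V}_i$ as the sum of a bounded--variation term and a Young integral against $\tilde B$, then apply a time--shifted version of the conditional Norris lemma (Proposition~\ref{lem:Norr}) on $[\eps^\alpha, T/2]$ in order to transfer smallness from $\langle\phi, Z_U\rangle$ to the coefficients of that decomposition, which are precisely $\langle\phi, Z_{[V_j, U]}\rangle$ for $j \in \{0,1,\ldots,d\}$. Since every element of $\mathcal{V}_{i+1}$ is a bracket $[V_j, U]$ with $U \in \mathcal{V}_i$ and $j \in \{0,\ldots,d\}$, this is exactly what is needed. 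Applying the Young chain rule to $Z_U(t) = J_{0,t}^{-1} U(X_t)$ together with \eref{eqn:sdedem}--\eref{eqn:sflowinv} gives $dZ_U = Z_{[V_0, U]}\,dt + \sum_{j=1}^d Z_{[V_j, U]}\circ dB^j$, and the splitting $B = \tilde B + m$ from \eref{e:defBtilde} recasts this as
\begin{equ}
\langle\phi, Z_U(t)\rangle - \langle\phi, Z_U(\eps^\alpha)\rangle = \int_{\eps^\alpha}^t a^U_s\,ds + \sum_{j=1}^d \int_{\eps^\alpha}^t b^U_{s,j}\circ d\tilde B^j_s\;,
\end{equ}
with $b^U_{s,j} = \langle\phi, Z_{[V_j, U]}(s)\rangle$ and $a^U_s = \langle\phi, Z_{[V_0, U]}(s)\rangle + \sum_j b^U_{s,j}\,\dot m^j_s$.

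The conditional H\"older moments of $a^U$ and $b^U$ are controlled by Lemma~\ref{lem:Jmom}, Corollary~\ref{cor:secondvariation}, and Lemma~\ref{lem:divct} (in the form \eref{eqn:mcest}); the only source of $\eps$--dependence is the $\dot m$ factor in $a^U$, which on $[\eps^\alpha, T/2]$ has H\"older norm polynomial in $\eps^{-\alpha}$, yielding $\tE(\|a^U\|^p_{\eps^\alpha, T/2,\gamma} + \|b^U\|^p_{\eps^\alpha, T/2,\gamma}) \le M_p \eps^{-\eta' p}$ for some $\eta' > 0$ that one can make arbitrarily small by shrinking $\alpha$. A time--shifted version of Proposition~\ref{lem:Norr} on $[\eps^\alpha, T/2]$, together with the hypothesis $\K_i^\eps(\alpha, r)$, then yields
\begin{equ}
\|a^U\|_{\eps^\alpha, T/2, \infty} + \|b^U\|_{\eps^\alpha, T/2, \infty} < \eps^{\bar q r}
\end{equ}
modulo negligible events, for some $\bar q > 0$ inherited from the Norris exponent $q$. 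The bound on $b^U$ directly gives $\|\langle\phi, Z_{[V_j, U]}\rangle\|_{\eps^\alpha, T/2, \infty} < \eps^{\bar q r}$ for $1 \le j \le d$. For the bracket with $V_0$, one invokes \eref{eqn:mcest} to bound $\|\sum_j b^U_{\cdot, j}\dot m^j\|_{\eps^\alpha, T/2, \infty} \le Md\,\eps^{\bar q r + \alpha(H-1)}$ and subtracts from the estimate on $a^U$ to obtain $\|\langle\phi, Z_{[V_0, U]}\rangle\|_{\eps^\alpha, T/2, \infty} < M' \eps^{\bar q r + \alpha(H-1)}$; absorbing the constant into a slight decrease of $\bar q$ establishes $\K_{i+1}^\eps(\alpha, \bar q r + \alpha(H-1))$. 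Taking the union over finitely many $U \in \mathcal{V}_i$ preserves negligibility.

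The principal obstacle is the singularity of $\dot m_t$ as $t \downarrow 0$: it rules out a direct application of the Norris argument on $[0, T/2]$ and produces the irreducible additive loss $\alpha(H-1) < 0$ at every step of the induction. This loss is precisely why the result is stated with $\bar q r + \alpha(H-1)$ rather than $\bar q r$, and it must eventually be balanced in the iteration: after $N-1$ uses of Lemma~\ref{lem:hormit2} the surviving exponent has the form $\bar q^{N-1} r_0 + \alpha(H-1)(1 + \bar q + \cdots + \bar q^{N-2})$, which stays positive only if $\alpha$ is chosen small enough in terms of $r_0$, $\bar q$, and the H\"ormander depth $N$ from Assumption~\ref{ass:Hormander}. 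The non--conditional arguments of \cite{Baud:Hair:07,Norr:86} avoid this issue entirely because no drift $m$ is present; the essential new input here is the careful bookkeeping of the $\eps^{\alpha(H-1)}$ factor through a shifted Norris estimate while retaining polynomial--in--$\eps^{-\alpha}$ control of the coefficients.
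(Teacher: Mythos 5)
Your proposal is correct and matches the paper's proof essentially step by step: the same chain-rule decomposition of $\langle\phi, Z_U\rangle$ into a bounded-variation part and a Young integral against $\tilde B$, the same application of Proposition~\ref{lem:Norr} on the shifted interval $[\eps^\alpha, T/2]$, and the same separation of the $[V_0,U]$ term using the bound \eref{eqn:mcest} on $\dot m$. The only difference is that you explicitly flag the a priori conditional moment hypotheses needed for Norris and the time-shift of the interval, both of which the paper leaves implicit.
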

\begin{proof}
Assume that $\K_i^\eps(\alpha,r)$ holds. For any $t \leq T/2$, it then follows from the chain rule that
\begin{equs}
\langle \phi, &Z_U(t) -  Z_U({\eps}^{\alpha}) \rangle = 
\int_{\eps^{\alpha}}^t \langle \phi, J_{0,s}^{-1} [V_0,U](X_s)\rangle \,ds  \label{eqn:Liebrackcal}  \\
& +\sum_{i=1}^d \int_{{\eps}^\alpha}^t \langle \phi, J_{0,s}^{-1} [V_i,U](X_s)\rangle \,dm^{i}_{s}
+\sum_{i=1}^d\int_{\eps^{\alpha}}^t \langle \phi, J_{0,s}^{-1} [V_i,U](X_s)\rangle
\circ d\tilde{B}_s^i\;,
\end{equs}
where $\tilde{B}_{\cdot}$ is as in \eref{e:defBtilde}.

Now by the hypothesis,
Proposition~\ref{lem:Norr} and  \eref{eqn:Liebrackcal}, it follows that
for any $U \in \mathcal{V}_i$,
\minilab{eqn:norrit}
\begin{equs}
\Big\|\Big\langle \phi,  \sum_{i=1}^d 
J_{0,\cdot}^{-1} [V_i,U](X_\cdot) \Big\rangle \Big\|_{\eps^\alpha,T/2,\infty} &< M\eps^{rq}\;, \label{eqn:norriti} \\
\Big\|\Big\langle \phi, J_{0,\cdot}^{-1} [V_0,U](X_\cdot)  +  \sum_{i=1}^d {dm^{i}_{\cdot} \over ds}
J_{0,\cdot}^{-1} [V_i,U](X_\cdot) \Big\rangle \Big\|_{\eps^\alpha,T/2,\infty} &< M\eps^{rq}\;, \qquad \label{eqn:norrit0}
\end{equs}
modulo negligible events, with $q$ as given by Proposition~\ref{lem:Norr}. Plugging \eref{eqn:mcest} and \eref{eqn:norriti} into 
\eref{eqn:norrit0}, it follows that for every
$U \in \mathcal{V}_i$, 
\begin{equs}
\|\langle \phi, J_{0,\cdot}^{-1} [V_0,U](X_\cdot) \rangle \|_{\eps^\alpha,T/2,\infty}\;
&< M \eps^{rq + \alpha(H - 1)}
\end{equs}
modulo negligible events. Choosing any $\bar q < q$, the claim then follows from the fact that
 for every exponent $\delta > 0$, one has $M \eps^\delta < 1$ modulo negligible events. (Recall that $H<1$, so that \eref{eqn:norriti}
 is actually slightly better than the claimed bound.)
\end{proof}

Now we iterate the previous argument.
Let $N \in \mathbb{N}$ be such that
\begin{equ}[e:boundbelow]
\inf_{|x| \le R} \inf_{|\xi| = 1} \sum_{U \in \bar \CV_N} |\scal{\xi, U(x)}|^2 > 0\;.
\end{equ}
It follows from the H\"ormander condition and the fact that the vector fields are smooth that such an $N$ exists.
From Lemmas \ref{lem:hormit1} $\&$ \ref{lem:hormit2}, we obtain that 
for every
$U \in \bar{\mathcal{V}}_N$, modulo negligible events,
\begin{equs}
\|\langle \phi, J_{0,\cdot}^{-1} U(X_\cdot) \rangle \|_{\eps^\alpha,T/2,\infty}
&<  \eps^{q_1}\;,
\label{eqn:norrisfinalest}
\end{equs}
where $q_1 = r \bar q^N + \alpha (H-1)\frac{(1 - (r\bar q)^N)}{1-r\bar q}$.  Since $\bar q$ and $r$ are fixed,
we can choose $\alpha$ small enough such that $q_1 > 0$. 

Now we make use of the 
H\"older continuity of $J^{-1}_{0,\cdot}$ and $X$. Indeed, 
it follows from Lemma \ref{lem:Jmom} that
\begin{equs}
\|J^{-1}_{0,\cdot}U(X_\cdot)  - U(x_0)\|_{0,\eps^\alpha,\infty} < \eps^{q_2}\;,
\label{eqn:contliebra}
\end{equs}
modulo negligible events, for any $q_2 < \alpha \gamma$. Combining  \eref{eqn:norrisfinalest}
and \eref{eqn:contliebra}, we conclude that
\begin{equs}
\sup_{U \in \bar{\mathcal{V}}_N} |\langle \phi, U(x_0)\rangle | 
&< \eps^{q_{1}} + \eps^{q_{2}}\;,
\end{equs}
modulo negligible events. This is in direct contradiction with \eref{e:boundbelow}, thus concluding the proof of
Theorem~\ref{thm:boundC}.
\end{proof}

We conclude that the matrix $\tilde{C}_T$ defined in 
\eref{eqn:mallmatorg} is almost surely invertible and $\tE(\|\tilde{C}_T\|^{-p} )
< M$
for $p \geq 1$. As a consequence of Theorem \ref{thm:mallmatest}, we obtain
the smoothness of the laws of $X_t$ conditional on a given past $\omega \in \CW$: 
\begin{theorem}\label{theo:smooth}
Let $\phi_t(x,\tilde \omega)$ be the solution to \eref{e:SDEmain} with notations as in Section~\ref{sec:basicdef},
and assume that Assumption~\ref{ass:Hormander} holds. Then, for every $\omega \in \CW$, the
distribution of $\phi_t(x,\tilde \omega)$ under $\CP(\omega,\cdot\,)$ has a smooth density with respect
to Lebesgue measure.
\end{theorem}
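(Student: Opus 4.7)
The approach is the standard one from Malliavin calculus: realise $X_T$ as a smooth functional of an underlying Wiener process $W$ and apply the standard criterion for smooth densities (see e.g.\ \cite{Nual:06}), which requires $X_T \in \mathbb{D}^\infty(W)$ together with the almost-sure invertibility of its Malliavin matrix $M_T$ and $\tE\|M_T^{-1}\|^p < \infty$ for every $p \ge 1$. Lemma~\ref{disintegrationformula} supplies precisely the required representation: under $\tprob = \CP(\omega,\cdot\,)$ the process $\tilde B$ of \eref{e:defBtilde} is the Volterra--It\^o integral $\tilde B_t = \alpha_H\int_0^t (t-s)^{H-1/2}\,dW_s$ of a standard Wiener process $W$, while $m = \CG\omega$ is a fixed deterministic drift that is smooth on $(0,T]$ by Lemma~\ref{lem:divct}.

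That $X_T \in \mathbb{D}^\infty$ is routine: differentiating the flow equations \eref{e:sdejac} repeatedly produces higher-order variation equations of the form \eref{e:generalequ}, so Lemma~\ref{lem:nuexthol}, Corollary~\ref{cor:secondvariation} and their iterates provide pathwise moment bounds on all directional derivatives of $X_T$ with respect to the driving signal; Fernique's theorem applied to $\|\tilde B\|_{0,T,\gamma}$ then upgrades these to $L^p$ bounds on the Malliavin--Sobolev norms. The central computation, carried out via the chain rule and the Volterra representation of $\tilde B$ above, is the factorisation
\begin{equation*}
M_T \;=\; J_{0,T}\, \A_T\, \K\K^{*}\, \A_T^{*}\, J_{0,T}^{*}\;,
\end{equation*}
where $\A_T$ is the operator from \eref{eqn:opA} and $\K$ is, up to a multiplicative constant, the fractional integration operator $\CI^{H-1/2}$ acting on $L^2([0,T],\R^d)$. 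The task thus reduces to a quantitative lower bound on $\A_T\K\K^*\A_T^{*}$.

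This is where Theorem~\ref{thm:mallmatest} enters. The plan is to establish an operator inequality of the form $\K\K^* \succeq c\, h_T \cdot \mathrm{Id}$ --- valid on the range of $\A_T^{*}$, with $c$ a random constant having sufficient moments --- which combined with the inverse-moment bound on $C_T = \A_T h_T \A_T^*$ from Theorem~\ref{thm:mallmatest} and the moment bounds on $J_{0,T}^{-1}$ from Lemma~\ref{lem:Jmom} delivers $\tE\|M_T^{-1}\|^p < \infty$ for every $p$; the standard Malliavin criterion then finishes the proof. The main obstacle is precisely this operator comparison: the kernel $(r-s)^{H-3/2}$ is singular on the diagonal and $\K$ is a smoothing operator with no bounded inverse on all of $L^2$. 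The cut-off $h_T$ --- identically $1$ on $[0,T/2]$ and vanishing on $[3T/4, T]$ --- has been built into the definition of $C_T$ precisely in order to restrict attention to the subspace on which $\K$ can be inverted by the fractional derivative $\CD^{H-1/2}$ (cf.\ Remark~\ref{rem:idinvers}) with quantitatively controlled bounds. This is why $C_T$, rather than the naive reduced Malliavin matrix $\hat C_T$ of \eref{eqn:mallmatorg}, is the object whose inverse moments are estimated in Theorem~\ref{thm:mallmatest}.
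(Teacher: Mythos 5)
Your skeleton is the same as the paper's: realise $X_T$ as a functional of the Wiener process $W$ via \eref{e:defBtilde}, verify $X_T \in \mathbb{D}^\infty$ using the variation-equation bounds of Section~\ref{sec:bounds}, factorise the Malliavin matrix as $\CM_T = J_{0,T}\A_T\K\K^*\A_T^*J_{0,T}^*$ with $\K = \CI^{H-1/2}$, and reduce the claim to inverse-moment bounds on $\CM_T$. The factorisation and the appeal to Theorem~\ref{thm:mallmatest} are both correct.

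The genuine gap is the crucial step you leave as a ``plan'': establishing $\K\K^* \succeq c\,h_T\cdot\mathrm{Id}$ on the range of $\A_T^*$ with quantitative control on $c^{-1}$. You rightly observe that $\K$ has no bounded inverse on $L^2$, but the remedy you propose --- that the cut-off $h_T$ isolates a subspace on which $\CD^{H-1/2}$ inverts $\K$ with good bounds --- is not how the paper proceeds and is not obviously workable: $\K^*$ is the fractional integral from the right endpoint, which $\CD^{H-1/2}$ (a left-endpoint operator) does not invert, and multiplying by $h_T$ does not make the inverse bounded. The paper sidesteps inversion entirely. Using $\CI^{H-1/2}\CD^{H-1/2} = \mathrm{Id}$ (Remark~\ref{rem:idinvers}) and Cauchy--Schwarz it writes, for any $f\in\CC^\gamma$,
\begin{equ}
\|f\|^2_{L^2} = \bigl\langle (\CI^{H-\frac12})^*f,\,\CD^{H-\frac12}f\bigr\rangle_{L^2}
\le \bigl\|(\CI^{H-\frac12})^*f\bigr\|_{L^2}\,\bigl\|\CD^{H-\frac12}f\bigr\|_{L^2}
\le M\bigl\|(\CI^{H-\frac12})^*f\bigr\|_{L^2}\,\|f\|_{0,T,\gamma}\;,
\end{equ}
which, applied to $f = \A_T^*J_{0,T}^*\xi$, turns a lower bound on $\|f\|^2_{L^2}$ (from Theorem~\ref{thm:mallmatest}, using $C_T \le \hat C_T$) and an \emph{upper} bound on $\|f\|_{0,T,\gamma}$ (from Section~\ref{sec:bounds}) into the required lower bound on $\|\K^*f\|_{L^2}$. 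This interpolation between the $L^2$ and H\"older scales is the idea your proposal is missing; without it, the operator comparison you posit would require essentially this estimate anyway.

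A secondary error: you attribute the introduction of $h_T$ and $C_T$ to the needs of this smoothness proof. In fact the smoothness argument would go through with the unmodified $\hat C_T$ (the paper only uses $C_T$ here because $C_T\le\hat C_T$ makes Theorem~\ref{thm:mallmatest} directly applicable). As Remark~\ref{rem:hintro} explains, $h_T$ is introduced for the strong Feller argument of Section~\ref{sec:SF}, to ensure that the extended control $v$ is regular enough at $s=1$ to lie in the Cameron--Martin space.
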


\begin{proof}
In order to obtain the smoothness of the densities, it suffices by \cite{Nual:06,Mal97SA,Baud:Hair:07,NuaSau09} to show
that the Malliavin matrix $\CM_T$ of the map $W \mapsto X_T^x$, where $W$ is the standard Wiener process
describing the solution as in \eref{e:defBtilde}, is invertible with negative moments of all orders. 
As we will see in Section~\ref{sec:SF} below (see equation~\ref{eqn:contbasm}), $\CM_T$ is given by
\begin{equ}
\scal{\xi,\CM_T\xi} = \bigl\| \bigl(\CI^{H - {1\over 2}} \bigr)^* \CA^* J_{0,T}^*\xi \bigr\|_{L^2}^2\;,
\end{equ}
where the $L^2$-adjoint of the fractional integral operator $\CI^{H-{1\over 2}}$ is given by
\begin{equ}
\bigl(\bigl(\CI^{H - {1\over 2}} \bigr)^*g\bigr)(t) = \int_t^T (s-t)^{H-{3\over 2}} g(s)\,ds\;.
\end{equ}
We already know from Theorem~\ref{thm:mallmatest} that $\|\CA^* J_{0,T}^*\xi\|_{L^2}^2 \ge \eps$ modulo
negligible events and we would like to make use of this information in order to obtain a lower bound on $\CM_T$.

Since $\CI^{H - {1\over 2}}$ and $\CD^{H - {1\over 2}}$ are inverses of each other,
for any measurable $f$ we have,
\begin{equs}
\|f\|^2_{L^2}  &= \scal{f, \CI^{H - {1\over 2}} \CD^{H - {1\over 2}} f}_{L^{2}}  \le \bigl\|\bigl(\CI^{H - {1\over 2}} \bigr)^*f\bigr\|_{L^{2}} \|\CD^{H - {1\over 2}} f\|_{L^{2}} \\
&\le M\bigl\|\bigl(\CI^{H - {1\over 2}} \bigr)^*f\bigr\|_{L^{2}} \|f\|_{0,T,\gamma}\;.
\end{equs}
Thus we obtain,
\begin{equ}
\bigl\| \bigl(\CI^{H - {1\over 2}} \bigr)^* \CA^* J_{0,T}^*\xi\bigr\|_{L^{2}} \ge M \|\CA^* J_{0,T}^*\xi\|_{L^{2}}^{2} \|\CA^* J^*_{0,T} \xi\|_{0,T,\gamma}^{-1}\;.
\end{equ}
Since we know from the results in Section~\ref{sec:bounds} that $ \|\CA^* J_{0,T} \xi\|_{0,T,\gamma} \le \eps^{-1}$ modulo negligible events,
we conclude that
\begin{equ}
\bigl\| \bigl(\CI^{H - {1\over 2}} \bigr)^* \CA^* J_{0,T}^*\xi\bigr\|_{L^{2}}  \ge \eps \|\CA^* J_{0,T}^*\xi\|_{L^{2}} ^{2}\;,
\end{equ}
holds modulo negligible events. The proof now follows from Theorem~\ref{thm:mallmatest}.
\end{proof}

\section{Strong Feller Property}
\label{sec:SF}

Recall from the arguments in Section~\ref{sec:ergodic} that the strong Feller property \eref{eqn:SF} is the only ingredient
required for the proof of Theorem~\ref{thm:main}. We will actually show something slightly stronger than just \eref{eqn:SF}.
Fix some arbitrary final time $T>1$, a Fr\'echet differentiable bounded map $\psi\colon \CC([1,T],\R^n)\to \R$ with bounded
derivative\footnote{The space $\CC([1,T],\R^n)$ is endowed with the usual supremum norm}, denote by $R_{1,T}\colon \CC(\R_+,\R^n)\to \CC([1,T],\R^n)$ the restriction operator, and set as before
\begin{equs}
\bar \CQ \psi (x,\omega) \eqdef \int_{\CC(\R_+,\R^d)} \psi(R_{1,T} z)\Q(x,\omega; dz)\;.
\end{equs}
With these notations, the main result of this article is:

\begin{theorem}\label{thm:mainSF}
Assume that the vector fields $\{V_i\}_{i \ge 0}$ have bounded derivatives of all orders and that furthermore the vector
fields $\{V_i\}_{i\ge 1}$ are bounded. Then, if H\"ormander's bracket condition (Assumption~\ref{ass:Hormander}) holds, 
there exists a continuous function
$K\colon \R^n \to \R_+$ and a constant $M>0$ such that for $\omega \in \CW_{\gamma,\delta}$ and $x \in \R^n$, the bound
\begin{equs}[eqn:polyexpbd]
|D_x \Q \psi(x,\omega)| \leq K(x) \,e^{M \|\CG \omega\|_{0,T,\gamma}^{1/\gamma}}\;,
\end{equs}
holds uniformly over all test functions $\psi$ as above with $\|\psi\|_\infty \le 1$. Here, $\CG$ was defined in \eref{eqn:opera} and both $K$ and $M$ are independent of $T$ and of $\psi$.
In particular, \eref{e:SDEmain} is strong Feller.
\end{theorem}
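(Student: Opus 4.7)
The plan is to establish a Bismut--Elworthy--Li type integration-by-parts formula for $D_x\bar\CQ\psi(x,\omega)\cdot\xi$ that expresses this derivative as an expectation of $\psi$ against a Skorokhod integral of a carefully chosen control; this is enabled by the invertibility of the modified Malliavin matrix $C_T$ established in Theorem~\ref{thm:mallmatest}. Using the representation \eref{e:defBtilde}, we regard $X_t = \phi_t(x,\tilde\omega)$ as a Wiener functional in the underlying Brownian motion $W$, and compute by the chain rule that the Malliavin derivative of $X_t$ in a direction $v \in L^2([0,T],\R^d)$ equals
\begin{equ}
\D_v X_t = c_H\, J_{0,t}\int_0^t J_{0,s}^{-1} V(X_s) \bigl(\CI^{H-1/2}v\bigr)(s)\,ds\;,
\end{equ}
for an explicit constant $c_H$ depending only on $H$, while the directional derivative with respect to the initial condition satisfies $D_x X_t\cdot\xi = J_{0,t}\xi$.

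To match these two derivatives on $[1,T]$, apply Theorem~\ref{thm:mallmatest} with $T$ replaced by some fixed $\tau \in (0,1)$, so that $C_\tau$ has negative moments of all orders and its cutoff $h_\tau$ is supported in $[0, 3\tau/4] \subset [0,1]$. Define
\begin{equ}
v_\xi = c_H^{-1}\,\CD^{H-1/2}\bigl(h_\tau(\cdot)\,V(X_\cdot)^*(J_{0,\cdot}^{-1})^*\,C_\tau^{-1}\xi\bigr)\;.
\end{equ}
Since $\CD^{H-1/2}$ inverts $\CI^{H-1/2}$, the function $\CI^{H-1/2}v_\xi = c_H^{-1}h_\tau\,\CA^*C_\tau^{-1}\xi$ is supported in $[0,3\tau/4]$, and the definition of $C_\tau$ gives
\begin{equ}
\int_0^t J_{0,s}^{-1}V(X_s)\bigl(\CI^{H-1/2}v_\xi\bigr)(s)\,ds = c_H^{-1}\,C_\tau C_\tau^{-1}\xi = c_H^{-1}\xi
\end{equ}
for every $t \ge 3\tau/4$. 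Consequently $\D_{v_\xi}X_t = J_{0,t}\xi = D_x X_t\cdot\xi$ for all $t \ge 1$, so that Malliavin integration by parts with respect to $W$ yields
\begin{equ}
D_x\bar\CQ\psi(x,\omega)\cdot\xi = \tE\bigl[\psi\bigl(X|_{[1,T]}\bigr)\,\delta(v_\xi)\bigr]\;,
\end{equ}
where $\delta$ is the Skorokhod integral relative to $W$. Since $\|\psi\|_\infty \le 1$, it suffices to estimate $\tE|\delta(v_\xi)|$.

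A Meyer-type inequality gives $\tE|\delta(v_\xi)|^2 \le C\bigl(\tE\|v_\xi\|_{L^2}^2 + \tE\|\D v_\xi\|_{L^2\otimes L^2}^2\bigr)$, reducing the task to bounding $v_\xi$ and its Malliavin derivative. The operator $\CD^{H-1/2}$ is bounded from sufficiently regular H\"older spaces on $[0,\tau]$ into $L^2([0,\tau])$, and the integrand defining $v_\xi$ factorises into $h_\tau(\cdot)$ (smooth and compactly supported), $V(X_\cdot)^*(J_{0,\cdot}^{-1})^*$ (H\"older with exponential moments under $\tprob$ by Lemma~\ref{lem:Jmom}), and $C_\tau^{-1}\xi$ (with all negative moments by Theorem~\ref{thm:mallmatest}). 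Every such moment depends on $\omega$ only through $\exp(M\|m\|_{0,T,\gamma}^{1/\gamma})$ with $m = \CG\omega$, and continuously on the initial condition. To handle $\D v_\xi$, use the identity $\D_s C_\tau^{-1} = -C_\tau^{-1}(\D_s C_\tau)C_\tau^{-1}$ together with the fact that the Malliavin derivatives of $X_r$, $J_{0,r}$ and $J_{0,r}^{-1}$ solve variational equations of the type \eref{e:generalequ} and \eref{e:zhat}, so that their moments are controlled by Lemma~\ref{lem:nuexthol} and Corollary~\ref{cor:secondvariation}. Assembling these bounds yields \eref{eqn:polyexpbd}, and the strong Feller property then follows from Definition~\ref{sfellerdefinition} by integrating $D_x\bar\CQ\psi$ along the segment $s\mapsto x + s(y-x)$ to produce a jointly continuous majorant $\ell(x,y,\omega)$ that vanishes on the diagonal.

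The principal obstacle is to rigorously verify that $v_\xi$ belongs to the domain of $\delta$ and to obtain quantitative control on $\|\D v_\xi\|$. The random operator $C_\tau^{-1}$ is a highly nonlinear functional of the Wiener path, and differentiating through it while simultaneously applying the non-local operator $\CD^{H-1/2}$ requires careful bookkeeping; the cutoff $h_\tau$ is precisely designed to keep the perturbation bounded away from the right endpoint, taming the action of $\CD^{H-1/2}$ there, while at the left endpoint the potential $s^{-(H-1/2)}$ singularity of $\CD^{H-1/2}$ applied to a H\"older function is square-integrable because $H - 1/2 < 1/2$. Uniformity over $|x_0| + \|\omega\|_{(\gamma,\delta)} \le R$ at each stage is inherited from Theorem~\ref{thm:mallmatest} and the pathwise estimates of Section~\ref{sec:bounds}.
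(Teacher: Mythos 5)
Your proposal is correct and follows essentially the same route as the paper: a Bismut--Elworthy--Li / stochastic control argument in which the control is $h\,\CA^*C^{-1}\xi$ pushed through $\CD^{H-1/2}$, the invertibility and negative moments of $C$ come from Theorem~\ref{thm:mallmatest}, and the Skorokhod integral is estimated via its isometry together with the moment bounds of Section~\ref{sec:bounds} (the paper's Proposition~\ref{prop:IBP} plus Propositions~\ref{prop:Holdertrick} and \ref{prop:sectermthm}). The only cosmetic difference is that you localise the control on $[0,\tau]$ with $\tau<1$ rather than taking $\tau=1$ as the paper does, and you correctly identify the delicate step as bounding $\D v_\xi$ through the Marchaud form of $\CD^{H-1/2}$ and the identity $\D\,C^{-1}=-C^{-1}(\D C)C^{-1}$.
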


The remainder of this section is devoted to the proof of this result.
For conciseness, from now onwards we will use the shorthand notation  
\begin{equs}
|\Psi(x,\omega)| < \tp\;,
\end{equs}
to indicate that there exist a continuous function $K$ on $\R^n$ and a constant $M$ such that the expression $\Psi(x,\omega)$ is bounded
by the right hand side of \eref{eqn:polyexpbd}.

 To obtain the bound for $D \Q \psi(\cdot, \omega)$, we use
 a  `stochastic control' argument which
 is inspired by \cite{Hair:Matt:06,Hair:Matt:09} and is similar in spirit to the arguments
 based on
 the  Bismut-Elworthy-Li formula \cite{Xuemei,BEL}. We will explain this argument in more detail in Section~\ref{sec:control} below,
but  before that let us introduce the elements of Malliavin calculus required for the construction.

Let $X^x_t$ and $J_{0,t}$ be the solution and its Jacobian, as defined in
 equations \eref{eqn:sdedem} and  \eref{eqn:sflow} respectively with initial condition $X_0^x =x$. Let $\phi_t(x,\tilde \omega) 
 = X_t^x$ denote the flow map as in Section~\ref{sec:basicdef}. Henceforth we will use $\phi_t(x,\tilde \omega)$ and $X^x_t$
 interchangeably, with a preference for the first notation when we want to emphasise the dependence of the solution either on the
 realisation of the noise or on its initial condition.

By the variation of constants formula, 
the Fr\'echet derivative of the flow map \textit{with respect to the driving noise} $\tilde \omega$ 
in the direction of $\int_0^\cdot v(s)\,ds$ is then given by
\begin{equs}
D_v X^x_t = D_v\phi_t(x,\tilde\omega) = J_{0,t}\A_t v \;, \label{eqn:contbas}
\end{equs}
where $\A_t$ is as defined in \eqref{eqn:opA}.
\begin{remark}
Notice that the operator $D$  defined above is a standard Fr\'echet derivative unlike in the case
$H = \frac{1}{2}$ where it would have to be interpreted as a Malliavin derivative.
 This is because the  integration with respect to the fBM for $H > \frac{1}{2}$ is just the 
 Reimann-Stieltjes integral, whereas for $H = \frac{1}{2}$ the stochastic integral is not 
 even a continuous (let alone differentiable) map of the noise in the case of multiplicative noise.
\end{remark}
As already mentioned in Remark \ref{rem:idinvers}, the operator $\CD^{H-{1\over 2}}$ and its inverse $\CI^{H-{1\over 2}}$ 
provide an isometry
between the Cameron-Martin space of the Gaussian measure $\CP(\omega,\cdot\,)$ and that of Wiener measure. 

As a consequence, if $v$ is such that $\tilde v = \CD^{H-{1\over 2}} v \in L^2(\R_+,\R^d)$, then we have the identity
 \begin{equ}
\D_{\tilde{v}} X_t^x  = D_v\phi_t(x,\tilde\omega) = J_{0,t}\A_t v = J_{0,t}\A_t\, \CI^{H-{1\over 2}}\tilde v\;,  \label{eqn:contbasm}
 \end{equ}
where $\D$ is the Malliavin derivative of $X_t^x$ when interpreted as a function on Wiener space
via the representation \eref{e:defBtilde}.

Before we continue, let us make a short digression on the definition of the Malliavin derivative that appears in the
above expression.
In general, if $Z \colon \widetilde{\CW} \to \R$ is 
any Fr\'echet differentiable function, then its Malliavin derivative with respect to the underlying Wiener process is
given by
\begin{equ}[e:def:Mal]
\D_{\tilde v} Z \eqdef D_{v}Z\;,
\end{equ}
where $D_vZ$ denotes the Fr\'echet derivative of $Z$ in the direction $\int_0^\cdot v(t)\,dt$ and where $\tilde v$
and $v$ are related by $\tilde v = \CD^{H-{1\over 2}} v$ as before.

Since, for every $T>0$, the map $\CD^{H-{1\over 2}}$ restricted to functions on $[0,T]$
is bounded from $\widetilde\CW$ to $\CC([0,T],\R^d)$ and since the dual of the latter space
consists of finite signed measures, we conclude from \eref{e:def:Mal} that if $Z$ is Fr\'echet differentiable, then there 
exists a function $s \mapsto \D_s Z$ which is locally of bounded variation and such that the 
Malliavin derivative of $Z$ can be represented as
\begin{equ}
\D_{\tilde v} Z = \int_0^\infty \scal{\D_s Z, \tilde v(s)}\,ds\;.
\end{equ} 
(The scalar product is that of $\R^d$ here.)
The quantity $\D_s Z$ should be interpreted as the variation of $Z$ with respect to the infinitesimal increment
of the underlying Wiener process $W$ at time $s$.

With this convention, it follows from \eref{eqn:contbasm} and the definition of $\CI ^{H-{1\over 2}}$ 
that for some constant $c_H$ one has the identity
\begin{equ}[e:MalliavinDer]
\D_s X_t^x = c_H \int_{s}^t J_{r,s} \,V(X_r^x)\, (r -t)^{H-{3\over 2}} \,dr \;,
\end{equ}
for $s \le t$. Here, $V$ denotes the matrix $(V_1,\ldots,V_d)$ as before. 
Note also that $\D_s X_t^x = 0$ for $s > t$ since $X$ is an adapted process. 
Of course, the whole point of Malliavin calculus is to also be able to deal with functions
that are not Fr\'echet differentiable, but for our purposes the framework discussed here will suffice.

\subsection{Stochastic control argument}
\label{sec:control}

At the intuitive level, the central idea of the stochastic control argument is as follows. 
In a nutshell, the strong Feller property states that for two nearby initial conditions $x_0$ and $y_0$
and an arbitrary realisation $\omega$ of the \textit{past} of the driving noise, 
it is possible to construct a coupling between the solutions $x_t$ and $y_t$ such that with high probability
(tending to $1$ as $y \to x$), one has $x_t = y_t$ for $t \ge 1$. One way of achieving such a coupling is 
to perform a change of measure on the driving process for one of the two solutions (say $y$) in such a way,
that the noise $\tilde \omega_y$ driving $y$ is related to the noise $\tilde \omega_x$ driving $x$ by
\begin{equ}
d \tilde \omega_y(t) = d \tilde \omega_x(t) + v_{x,y}(t)\,dt\;,
\end{equ}
where $v$ is a `control' process that aims at steering the solution $y$ towards the solution $x$.
If one takes $y = x + \eps \xi$ and looks for controls of the form $v_{x,y} = \eps v$, then in the limit
$\eps \to 0$, the effect of the control $v$ onto the solution $x$ after time $t$ is given
by \eref{eqn:contbasm}, while the effect of shifting the initial condition by $\xi$ is given by $J_{0,t}\xi$.
It is therefore not surprising that in order to prove the strong Feller property, our main 
task will be to find a control $v$ such that $J_{0,1}\A_1 v = J_{0,1}\xi$, or equivalently
$\A_1 v = \xi$. This is the content of the following proposition:

\begin{proposition}\label{prop:IBP}
Assume that, for every $x \in \R^n$ and $\omega \in \CW$, 
there exists a stochastic processes $v  \in \CC^\gamma(\R_+,\R^d)$ such that the identity
\begin{equ}
\CA v = \xi\;,
\end{equ}
holds almost surely, where $\A \eqdef \A_1$  defined in \eref{eqn:opA}. 
Assume furthermore that the map $\tilde w \mapsto v$ 
is Fr\'echet differentiable from $\widetilde \CW$ to  $\CC^\gamma(\R_+,\R^d)$. Finally, we assume that 
the Malliavin derivative of the process $\tilde v \eqdef \CD^{H-{1\over 2}} v$ 
satisfies the
bounds
\begin{equ}[e:boundv]
M_1 = \tE \int_0^\infty |\tilde v(s)|^2\,ds < \infty\;,\quad 
M_2 = \tE\int_0^\infty \int_0^\infty |\D_t \tilde v(s)|^2\,ds\,dt < \infty\;.
\end{equ}
Then, the bound
\begin{equ}
|\langle D_x \Q \psi (x, \omega), \xi \rangle| \le \|\psi\|_\infty \sqrt{M_1 + M_2}\;,
\end{equ}
holds uniformly over all test functions $\psi$ as in Theorem~\ref{thm:mainSF}, all $T>1$, and all $\xi \in \R^n$.

In particular, if $v$ is such that $M_1 + M_2 < \tp$, then the conclusions of Theorem~\ref{thm:mainSF} hold.
\end{proposition}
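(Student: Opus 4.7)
The plan is to recognise $\langle D_x \bar\CQ\psi(x,\omega), \xi\rangle$ as the expectation of a Malliavin derivative in the direction $\tilde v$, and then to apply Skorokhod integration by parts on the underlying Wiener space to transfer the derivative off of $\psi$.

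First I would write
\begin{equ}
\bar\CQ\psi(x,\omega) = \tE\bigl[\psi(R_{1,T}\phi(x,\tilde\omega))\bigr]\;,
\end{equ}
and differentiate in $x$, using the chain rule together with \eref{eqn:contbas}, to obtain
\begin{equ}
\langle D_x \bar\CQ\psi(x,\omega), \xi\rangle = \tE\bigl[D\psi(R_{1,T}\phi)\bigl(R_{1,T}(J_{0,\cdot}\xi)\bigr)\bigr]\;.
\end{equ}
The identity $\CA v = \xi$ only constrains $v$ on $[0,1]$, whereas for the step below I need $\CA_t v = \xi$ for every $t \ge 1$. I would therefore assume without loss of generality that $v$ vanishes on $[1,\infty)$; this is the natural choice in the intended application (one takes $v$ of the form $h_1\CA^* \phi$, which already has support inside $[0,1]$). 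With this convention, \eref{eqn:contbas} yields $D_v\phi_t(x,\tilde\omega) = J_{0,t}\CA_t v = J_{0,t}\xi$ for every $t \in [1,T]$, so combining with \eref{eqn:contbasm} gives
\begin{equ}
\langle D_x \bar\CQ\psi(x,\omega), \xi\rangle = \tE\bigl[D_v(\psi\circ R_{1,T}\phi)\bigr] = \tE\bigl[\D_{\tilde v}(\psi\circ R_{1,T}\phi)\bigr]\;,
\end{equ}
with $\tilde v = \CD^{H-{1\over 2}} v$.

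Next I would apply the Skorokhod duality formula on the Wiener space associated with the underlying Brownian motion $W$. Writing $F = \psi\circ R_{1,T}\phi$, which satisfies $|F| \le \|\psi\|_\infty$ and is Fréchet-differentiable in $\tilde\omega$, duality gives $\tE[\D_{\tilde v} F] = \tE[F\,\delta(\tilde v)]$, where $\delta$ denotes the Skorokhod divergence. Combined with Cauchy--Schwarz, this yields
\begin{equ}
|\langle D_x \bar\CQ\psi(x,\omega), \xi\rangle| \le \|\psi\|_\infty \sqrt{\tE[\delta(\tilde v)^2]}\;.
\end{equ}
The standard second-moment identity for the Skorokhod integral then reads
\begin{equ}
\tE[\delta(\tilde v)^2] = \tE\!\int_0^\infty |\tilde v(s)|^2\,ds \;+\; \tE\!\int_0^\infty\!\!\int_0^\infty \langle \D_t \tilde v(s),\D_s \tilde v(t)\rangle\,ds\,dt\;,
\end{equ}
and the second term is bounded in absolute value by $M_2$ via Cauchy--Schwarz, so $\tE[\delta(\tilde v)^2] \le M_1 + M_2$, which is the desired estimate.

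The main obstacle I anticipate is the justification of the Skorokhod duality above: the functional $F$ is only Fréchet-differentiable along smooth directions (not \emph{a priori} in the sense of Malliavin Sobolev spaces), and $\tilde v$ is not assumed to be adapted to the filtration of $W$. Both points can be resolved by standard density and closability arguments in Malliavin calculus (as in \cite{Nual:06}) once the square-integrability condition \eref{e:boundv} on $\tilde v$ and its Malliavin derivative is in place. The concluding `in particular' clause is then immediate: if $v$ can be chosen measurably in $(x,\omega)$ so that $M_1 + M_2 < \tp$ uniformly, then $|D_x\bar\CQ\psi(x,\omega)| < \tp$, which is exactly the content of Theorem~\ref{thm:mainSF}.
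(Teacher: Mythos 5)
Your proposal is correct and follows essentially the same route as the paper: extend the control $v$ by zero past time $1$ so that $\CA_t v = \xi$ for all $t \ge 1$, identify $\langle D_x\bar\CQ\psi,\xi\rangle$ with $\tE[\D_{\tilde v}(\psi\circ R_{1,T}\phi)]$ via \eref{eqn:contbasm}, invoke Skorokhod integration by parts, and bound $\tE[\delta(\tilde v)^2]$ by $M_1 + M_2$ using the second-moment formula together with Cauchy--Schwarz on the trace term. The paper writes the duality step as an $L^1$ bound on the Skorokhod integral rather than explicitly naming $\delta(\tilde v)$, but the underlying argument is identical.
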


\begin{remark}
Since $\CD^{H-{1\over 2}}$ is bounded from $\CC^\gamma(\R_+,\R^d)$ into $\CC(\R_+,\R^d)$, the Malliavin
derivative of $\tilde v$ exists, so that the expression in \eref{e:boundv} makes sense.
\end{remark}

\begin{proof}
 Given an initial displacement $\xi \in \R^n$, 
we seek for a `control' $v$ on the time interval $[0,1]$ that solves the equation $\A v = \xi$, where we use the
shorthand notation $\A = \A_1$. 
If this can be achieved,
then we extend $v$ to all of $\mathbb{R}_+$ by setting $v(s) = 0$ for $s \ge 1$ and we define $\tilde v = \CD^{H-{1\over 2}} v$.
Note that since $v(s) = 0$ for $s > 1$, it follows from the definition of $\CA_t$ that we have $\CA_t v = \CA v = \xi$ for $t \ge 1$.
If $v$ is sufficiently regular in time so that this definition makes sense, we thus have the identity 
\begin{equs}
\D_{\tilde{v}} X^x_T = J_{0,T} \xi\;,
\label{eqn:contMalli}
\end{equs}
for every $T \ge 1$. In particular, for a $\CC^1$ test function $\psi$ as  above and
for  $\xi \in \mathbb{R}^n$, 
\begin{equs} 
\langle D_x \Q \psi (x, \omega), \xi \rangle &= \tE\Big((D \psi)(\phi(x,\tilde\omega)) J_{0,\cdot}\xi\Big)\\
&= \tE\Big((D \psi)(\phi(x,\tilde\omega)) \,\D_{\tilde{v}} X_\cdot\Big) = \tE\big(\D_{\tilde{v}} \bigl(\psi(\phi(x,\tilde\omega))\bigr)\big)\;.
\end{equs}
It then follows from the integration by parts formula of Malliavin calculus \cite{Nual:06} that
this quantity is equal to
\begin{equ}
\ldots =\tE\Big( \psi(\phi_T(x,\tilde\omega)) \int_{0}^T  \scal{\tilde{v}(s), dW_s}  \Big)
\leq \|\psi\|_{\infty}\tE\,\Big|\int_{0}^T \scal{\tilde v(s) , dW_s} \Big|\;.
\end{equ}
The stochastic integral appearing
in this expression is in general not an It\^o integral, but
a Skorohod integral, since its integrand is in general not adapted to the filtration generated by $W$.

For Skorohod integrals one has the following extension of It\^o's isometry
 \cite{Nual:06}:
\begin{equs}
\tE\Big( \int_{0}^{\infty} \scal{\tilde v(s) , dW_s} \Big)^2
&= \tE\Big( \int_{0}^{\infty} |\tilde v(s)|^2 ds \Big)
 + \tE \int_{0}^{\infty} \int_{0}^{\infty}
\tr \bigl(\D_t \tilde v(s)^T\, \D_s\tilde v(t)\bigr)\,ds\, dt\\
&\le M_1 + M_2\;, \label{eqn:integsko}
\end{equs}
where as before $\D$ denotes Malliavin derivative with respect to the underlying Wiener process.
The claim then follows at once.
\end{proof}

 In order to have the identity \eref{eqn:polyexpbd}, it therefore remains to find a control $v$ satisfying $\CA v = \xi$ and
 such that
\begin{equ}
M_1 + M_2 < \tp \;,
\label{eqn:scito}
\end{equ} 
where the quantities $M_1$ and $M_2$ are defined by \eref{e:boundv} in terms of $\tilde v = \CD^{H-{1\over 2}}v$.
A natural candidate  for such a control $v$ can be identified by
a simple least squares formula. Indeed, note that the 
 adjoint $\A^*: \mathbb{R}^n \mapsto L^2(\R_+, \R^d)$ of $\A$ is given by:
\begin{equ}[eqn:astar]
(\A^* \xi)(s)  =  {V(X_s)}^* (J_{0,s}^{-1})^*\xi\;,\quad \xi \in \mathbb{R}^n\;.
\end{equ}
Let $h = h_1$ where $h_1 \colon [0,1] \to \R_+$  is as  defined in \eref{eqn:hfun}. 
Assuming that $\A h \A^*$ is invertible,
one possible solution to the equation $\A v = \xi$ is given by
\begin{equ}
v(s) \equiv h(s)\, \big(\A^* (\A h \A^*)^{-1}\xi \big)(s)\;. \label{eqn:stochcont}
\end{equ}
Note that $\A h \A^*$ is precisely equal to the `modified Malliavin matrix' $C_1$ defined in \eref{e:modifMall}.
By the results from the previous section the matrix $C_1$ is almost surely invertible and therefore
 the expression \eqref{eqn:stochcont} does make sense.
 
 The aim of the next section is to show that the choice \eref{eqn:stochcont} does indeed
 satisfy the assumptions of Proposition~\ref{prop:IBP}.

\begin{remark}\label{rem:hintro}
For the identity \eref{eqn:contbasm} to hold, 
the stochastic control $v(\cdot)$ needs to belong to the 
Cameron-Martin space of $\CP(0,\cdot)$, and
 the function $h(\cdot)$ is introduced 
just for this purpose.
Indeed, since we extend the control $v(\cdot)$ to vanish outside the interval $[0,1]$,
one needs to be careful about the regularity of $v(s)$ at $s =1$.
It can be shown using the fractional integrals  that, in order
for $v(\cdot)$ to have the required regularity, the function
$h(s)$ should be  $\mathcal{O}( (1-s)^{H})$
for $s \approx 1$. The specific $h_1(\cdot)$ we chose in \eref{eqn:hfun},
of course, satisfies this.
\end{remark}

\subsection{Proof of the main result, Theorem~\ref{thm:mainSF}}

In view of Proposition~\ref{prop:IBP}, the proof of Theorem~\ref{thm:mainSF}
is complete if we can show that the stochastic process $v$ defined by 
\eref{eqn:stochcont} satisfies the bounds \eref{eqn:scito}. This will be the content
of Propositions~\ref{prop:Holdertrick} and \ref{prop:sectermthm} below.

We start with an estimate on the H\"older norm of $v$ which will
be used repeatedly. Here and in the rest of this section,
we will make use of the notation
\begin{equ}
\|v\|_{\gamma} \eqdef \|v\|_{0,1,\gamma} + \|v\|_{0,1,\infty}
\end{equ}
for the H\"older norm of $v$. 
This has the advantage of being a norm rather than just a seminorm and,
 for any two H\"older continuous functions $u$ and $v$, we have the bound
\begin{equ}
\|u v\|_{\gamma} \leq M\|u\|_{\gamma} \|v\|_{\gamma}\;,
\end{equ}
for some fixed constant $M>0$. With this notation, our bound is:

\begin{lemma}\label{lem:vcondhold} The stochastic control $v$ defined
in \eref{eqn:stochcont} is continuous and satisfies the bound
\begin{equ}
\tE{\|v\|^2_{\gamma}} < \tp\;.
\end{equ}
\end{lemma}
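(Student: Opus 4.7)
The starting point is the explicit formula
\begin{equ}
v(s) = h(s)\,V(X_s)^*\,(J_{0,s}^{-1})^*\,C_1^{-1}\xi,
\end{equ}
read off from \eref{eqn:stochcont} and \eref{eqn:astar}. Continuity in $s$ is immediate: $h$ is smooth, $s\mapsto X_s$ and $s\mapsto J_{0,s}^{-1}$ are continuous by Lemmas~\ref{lem:nuexthol} and~\ref{lem:Jmom}, $V$ is smooth, and $C_1^{-1}\xi$ is just a random vector in $\R^n$ that does not depend on $s$.

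For the H\"older estimate, I would exploit the product inequality $\|uw\|_\gamma \le M\|u\|_\gamma\|w\|_\gamma$ recorded just above the lemma, together with the fact that $\|h\|_\gamma$ is a fixed finite constant, to obtain the pointwise bound
\begin{equ}
\|v\|_\gamma \;\le\; M\,\|V(X_\cdot)\|_\gamma\;\|J_{0,\cdot}^{-1}\|_\gamma\;\|C_1^{-1}\|\;|\xi|.
\end{equ}
Squaring and taking $\tE$, I would then apply H\"older's inequality with exponents $(3,3,3)$ to split the expectation into
\begin{equ}
\tE\|v\|_\gamma^2 \;\le\; M|\xi|^2\,\bigl(\tE\|V(X_\cdot)\|_\gamma^{6}\bigr)^{1/3}\bigl(\tE\|J_{0,\cdot}^{-1}\|_\gamma^{6}\bigr)^{1/3}\bigl(\tE\|C_1^{-1}\|^{6}\bigr)^{1/3}.
\end{equ}
Each of the three factors has already been estimated in earlier sections. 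For the first, since the $V_i$ have bounded first derivatives one has $\|V(X_\cdot)\|_\gamma \le M(1+\|X\|_\gamma)$, and Lemma~\ref{lem:nuexthol} combined with $B=\tilde B + m$ and Fernique's theorem for $\tilde B$ gives a bound of the form $K(x)\,e^{M\|m\|_{0,1,\gamma}^{1/\gamma}}$. For the second, this is precisely the statement of Lemma~\ref{lem:Jmom}. For the third, the Corollary to Theorem~\ref{thm:mallmatest} gives uniform bounds on $\tE\|C_1^{-p}\|$ over bounded sets of initial data and pasts.

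The one point that needs care, and which I expect to be the main obstacle, is that the bound on the third factor coming from the Corollary is stated uniformly in $|x_0|+\|\omega\|_{(\gamma,\delta)}\le R$, whereas the target estimate $\tp = K(x)\,e^{M\|\CG\omega\|_{0,T,\gamma}^{1/\gamma}}$ is phrased in terms of $\|m\|=\|\CG\omega\|$ rather than $\|\omega\|$. I would handle this by inspecting the proof of Theorem~\ref{thm:mallmatest}: all the ingredients (moments of $X$, of $J^{-1}$, H\"older norms of $\tilde B$, and the application of Proposition~\ref{lem:Norr}) depend on $\omega$ only through $m=\CG\omega$, so the Corollary can be restated with a bound of the form $K(x)\,e^{M\|m\|_{0,T,\gamma}^{1/\gamma}}$. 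Once that observation is in place, the three factors multiply to give $\tE\|v\|_\gamma^2 < \tp$, which is the claimed bound.
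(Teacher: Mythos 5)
Your proposal is correct and follows essentially the same route as the paper: write $v=h\,V(X_\cdot)^*(J_{0,\cdot}^{-1})^*C_1^{-1}\xi$, bound $\|v\|_\gamma$ by the product of the H\"older norms of the factors, and then apply a H\"older/Cauchy--Schwarz splitting together with Lemma~\ref{lem:Jmom} and Theorem~\ref{thm:mallmatest}. The only cosmetic difference is that the paper uses Cauchy--Schwarz with the two factors $\|X\|_\gamma^4\|J_{0,\cdot}^{-1}\|_\gamma^4$ and $|\eta|^4$ rather than a three-way H\"older split; your observation that the uniformity in the Corollary to Theorem~\ref{thm:mallmatest} must be read in terms of $\|\CG\omega\|$ rather than $\|\omega\|_{(\gamma,\delta)}$ is a valid point that the paper leaves implicit.
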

\begin{proof}
Setting $v(s) = h(s) u(s)$, we obtain from \eref{eqn:stochcont} the bound
\begin{equs}[eqn:vtempes]
\|u\|^2_{\gamma} \leq  \|\A^* \eta\|^2_{\gamma}\;,
\end{equs}
where $\eta = (\A h \A^*)^{-1}\xi$.
From \eref{eqn:astar} and the fact that $V$ has bounded derivatives 
it follows that
\begin{equ}
\|\A^* \eta \|_{\gamma} \leq M \|V(X_\cdot)\|_{\gamma}\|J^{-1}_{0,\cdot}\|_{\gamma} |\eta| \leq 
M \|X_\cdot\|_{\gamma}\|J^{-1}_{0,\cdot}\|_{\gamma} |\eta|\;,
\end{equ}
and therefore by Cauchy-Schwarz
\begin{equ}
\Big(\tE \|\A^* \eta \|^2_{\gamma}\Big)^2  \leq M \tE \big(\|X_\cdot\|^4_{\gamma}\|J^{-1}_{0,\cdot}\|^4_{\gamma}\big)\, \tilde \E |\eta|^4 \;.
\end{equ}
From  Lemma \ref{lem:Jmom} we deduce that $\tE \big(\|X_\cdot\|^4_{\gamma}\|J^{-1}_{0,\cdot}\|^4_{\gamma}\big) < \tp$. 
Also from Theorem \ref{thm:mallmatest}, we have $\tE |(\A h \A^*)^{-1}\xi|^p < \tp$ for any $p \geq 1$. Thus we obtain 
\begin{equs}
\tE \|u\|^2_{\gamma} \leq  \tp\;. 
\end{equs}
The claim then follows immediately from $v = hu$.
\end{proof}

\begin{remark}
Had we not introduced the control $h$, we could still obtain continuity of $v$ on $[0,1]$, but we would 
have a discontinuity at $t=1$.
\end{remark}

As a consequence, we show that $\tilde v = \CD^{H-{1\over 2}}v$ is square integrable:
\begin{proposition} \label{prop:Holdertrick} For the stochastic control $v(\cdot)$ defined
in \eref{eqn:stochcont} and the operator $\mathcal{D}$ defined in \eref{e:frac}, the
following estimate holds:
\begin{equs}
\tE\Big( \int_{0}^{\infty} |\tilde v(s)|^2 ds \Big) < \tp\;.
\end{equs}
\end{proposition}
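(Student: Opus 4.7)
The plan is to establish the pathwise bound
\begin{equ}
\int_0^\infty |\tilde v(s)|^2\,ds \le M \|v\|_\gamma^2\;,
\end{equ}
for some deterministic constant $M$, and then take expectations and invoke Lemma~\ref{lem:vcondhold}. To this end I would exploit the fact that, although $v$ is not in general differentiable, it is $\gamma$-H\"older with $\gamma > {1\over 2} > H - {1\over 2}$, is supported in $[0,1]$, and has $h(0)=1$ so $v(0)$ need not vanish.

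First I would rewrite $\tilde v = \CD^{H-{1\over 2}}v$ in its Marchaud form on $(0,1]$: integrating by parts in the definition \eref{e:frac} gives, for $s \in (0,1]$,
\begin{equ}
\tilde v(s) = \frac{1}{\Gamma({3\over 2}-H)}\left[\frac{v(s)}{s^{H-1/2}} + \bigl(H-{\textstyle{1\over 2}}\bigr)\int_0^s \frac{v(s)-v(r)}{(s-r)^{H+1/2}}\,dr\right].
\end{equ}
The first term satisfies $|v(s)|/s^{H-1/2} \le \|v\|_\infty s^{1/2-H}$, and its square is integrable on $(0,1)$ precisely because $H < 1$. For the second term, the H\"older bound $|v(s)-v(r)|\le \|v\|_{0,1,\gamma}(s-r)^\gamma$ yields a contribution of order $\|v\|_\gamma s^{\gamma - H + 1/2}$, whose square is integrable on $(0,1)$ because $\gamma > H - {1\over 2}$. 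Both pieces thus contribute at most $M\|v\|_\gamma^2$ to $\int_0^1 |\tilde v(s)|^2\,ds$.

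For $s > 1$, since $v$ vanishes there, differentiating under the integral in \eref{e:frac} gives
\begin{equ}
\tilde v(s) = \frac{{1\over 2}-H}{\Gamma({3\over 2}-H)} \int_0^1 (s-r)^{-H-1/2}\,v(r)\,dr\;,
\end{equ}
so that $|\tilde v(s)|\le M\|v\|_\infty \bigl((s-1)^{1/2-H}-s^{1/2-H}\bigr)/(H-{1\over 2})$. On $(1,2]$ the dominant term behaves like $(s-1)^{1/2-H}$, whose square $(s-1)^{1-2H}$ is integrable thanks to $H<1$; on $[2,\infty)$, a Taylor expansion of $(1-1/s)^{1/2-H}$ shows that the difference decays like $s^{-1/2-H}$, giving a squared integrand of order $s^{-1-2H}$ which is integrable at infinity. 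Hence $\int_1^\infty |\tilde v(s)|^2\,ds \le M\|v\|_\infty^2$.

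Combining the two regimes yields the pointwise bound $\int_0^\infty |\tilde v(s)|^2\,ds \le M\|v\|_\gamma^2$, and taking $\tilde\E$ together with Lemma~\ref{lem:vcondhold} completes the proof. The only delicate point is keeping track of the endpoint behaviour: at $s=0$ and $s=1^+$ the pointwise bound on $\tilde v$ blows up, and integrability of the square requires the two strict inequalities $H<1$ and $\gamma > H-{1\over 2}$, both of which are built into our standing assumptions on the parameters.
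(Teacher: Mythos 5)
Your proof is correct and follows essentially the same strategy as the paper: establish a pathwise bound on $\int_0^\infty |\tilde v(s)|^2\,ds$ in terms of $\|v\|_\gamma^2$ using the Marchaud representation of $\CD^{H-1/2}$, then take $\tE$ and invoke Lemma~\ref{lem:vcondhold}. The only genuine difference is in the treatment of $s>1$. You differentiate directly under the integral sign (valid because the kernel $(s-r)^{1/2-H}$ is smooth on $\{s>1, r\in[0,1]\}$), which gives the bound $|\tilde v(s)| \le M\|v\|_\infty\bigl((s-1)^{1/2-H}-s^{1/2-H}\bigr)$: this is singular at $s=1^+$ but square-integrable since $1-2H>-1$, and decays like $s^{-1/2-H}$ at infinity. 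The paper instead keeps the Marchaud form for all $s$ and exploits the fact that $v$ vanishes on $[3/4,1]$ (because $h$ does), obtaining the uniformly bounded estimate $|\zeta(s)|\le M\|v\|_\gamma (s-3/4)^{-1/2-H}$ on $[1,\infty)$. Your route is slightly more elementary and needs less structure from $h$ (only that $v$ is supported in $[0,1]$, bounded, and $\gamma$-H\"older there); the paper's route sidesteps the singularity at $s=1$ entirely by using the extra support information, which keeps the estimate uniformly bounded rather than merely square-integrable. Either works for the statement at hand.
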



Before we turn to the proof of Proposition~\ref{prop:Holdertrick}, we state the following useful result:
\begin{lemma} \label{lem:Dalpop} 
For any $s \geq 0, \,\alpha > 0$ and $f \in \CC^{\gamma}$ with $\gamma> \alpha$ ,
\begin{equs}
\Gamma( 1 - \alpha)\, \CD^{\alpha} f(s) &= \frac{d}{ds} \int_0^{s} (s -r )^{-\alpha} f(r) \,dr \\
&=   s^{-\alpha} f(s) - \alpha \int_{0}^s
(s -r )^{-\alpha -1} \big(f(r) - f(s)\big)\,dr \;.
\label{eqn:Marchder}
\end{equs}
\end{lemma}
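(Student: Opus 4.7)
The plan is to first check that the right-hand side of \eref{eqn:Marchder} is well-defined for $f \in \CC^\gamma$ with $\gamma > \alpha$. Indeed, $|f(r) - f(s)| \le \|f\|_\gamma (s-r)^\gamma$, so the integrand $(s-r)^{-\alpha-1}(f(r)-f(s))$ is bounded by $\|f\|_\gamma (s-r)^{\gamma - \alpha - 1}$, which is integrable on $[0,s]$ precisely because $\gamma > \alpha$. This is the only place where the hypothesis $\gamma > \alpha$ is used.

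Next I would establish the identity for smooth $f$ by a direct calculation. Using the substitution $u = s - r$,
\begin{equ}
\int_0^s (s-r)^{-\alpha} f(r)\,dr = \int_0^s u^{-\alpha} f(s-u)\,du\;,
\end{equ}
so differentiating in $s$ gives
\begin{equ}
\frac{d}{ds}\int_0^s (s-r)^{-\alpha} f(r)\,dr = s^{-\alpha} f(0) + \int_0^s (s-r)^{-\alpha} f'(r)\,dr\;.
\end{equ}
Now I would integrate by parts in the last integral: since $f(s)$ is constant in the variable of integration, $f'(r)\,dr = d(f(r)-f(s))$, and $\partial_r (s-r)^{-\alpha} = \alpha(s-r)^{-\alpha-1}$, so
\begin{equ}
\int_0^s (s-r)^{-\alpha} f'(r)\,dr = \Big[(s-r)^{-\alpha}(f(r)-f(s))\Big]_0^s - \alpha \int_0^s (s-r)^{-\alpha-1}(f(r)-f(s))\,dr\;.
\end{equ}
The boundary contribution at $r=s$ vanishes because $(s-r)^{-\alpha}|f(r)-f(s)| \lesssim (s-r)^{\gamma-\alpha} \to 0$ (this is exactly the point where $\gamma > \alpha$ matters), while the contribution at $r=0$ is $-s^{-\alpha}(f(0)-f(s))$. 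Summing the pieces yields
\begin{equ}
\frac{d}{ds}\int_0^s (s-r)^{-\alpha} f(r)\,dr = s^{-\alpha} f(s) - \alpha \int_0^s (s-r)^{-\alpha-1}(f(r)-f(s))\,dr\;,
\end{equ}
which is the claimed identity.

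Finally, to pass from smooth $f$ to arbitrary $f \in \CC^\gamma$, I would approximate $f$ by mollifications $f_\eps \in \CC^\infty$ chosen so that $f_\eps \to f$ locally uniformly, with uniform control on the $\CC^{\gamma'}$ seminorm for some $\gamma' \in (\alpha,\gamma)$. Both sides of \eref{eqn:Marchder} then depend continuously on $f$ in this topology: the Marchaud integral is controlled by dominated convergence using the integrable majorant $(s-r)^{\gamma'-\alpha-1}$, and the first expression converges as the $\CC^1$-difference quotient of $\int_0^s (s-r)^{-\alpha}f_\eps(r)\,dr$ can be identified with $s^{-\alpha}f_\eps(0)+\int_0^s(s-r)^{-\alpha}f_\eps'(r)\,dr$ and compared using the smooth case. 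The only subtle step is the uniform control of the boundary term at $r=s$ during this passage to the limit, but this is again handled by the same Hölder estimate that made the right-hand side well-defined in the first place.
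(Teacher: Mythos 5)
The paper's own proof is simply a citation to Samko--Kilbas--Marichev (Theorem 13.1 and Lemma 13.1), so any self-contained argument is necessarily a different route. Your elementary proof is correct and in fact reproduces the standard derivation of the Marchaud form from the Riemann--Liouville form: the substitution $u=s-r$ followed by Leibniz and integration by parts, with the Hölder hypothesis $\gamma>\alpha$ used exactly twice (to ensure the Marchaud integral is absolutely convergent, and to kill the boundary term at $r=s$). The bookkeeping of the $r=0$ boundary term, $-s^{-\alpha}(f(0)-f(s))$, correctly recombines with the Leibniz contribution $s^{-\alpha}f(0)$ to give $s^{-\alpha}f(s)$, so the smooth case is sound. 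The trade-off between the two approaches: the paper keeps the exposition short by outsourcing the statement, whereas your proof is self-contained but must still spend a paragraph on the density/mollification step. That last step is the one place you slightly gloss over: passing from smooth $f_\eps$ to general $f\in\CC^\gamma$ requires justifying that the derivative $\frac{d}{ds}\int_0^s(s-r)^{-\alpha}f_\eps(r)\,dr$ converges to $\frac{d}{ds}\int_0^s(s-r)^{-\alpha}f(r)\,dr$, not merely that the integrals converge. The cleanest fix is to argue directly: having shown the Marchaud expression is a continuous function of $s$ (via the dominated convergence you invoke), one then verifies by Fubini that it integrates up to $\int_0^s(s-r)^{-\alpha}f(r)\,dr$, which gives the first equality for all $f\in\CC^\gamma$ without any mollification. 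Either way the lemma holds; just be aware that your final paragraph as written asserts rather than proves the convergence of the derivatives.
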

\begin{proof} See \cite[Thm~13.1]{Sam:Kil:Mar:93}, as well as
Lemma 13.1 in the same monograph. 
\end{proof}

\begin{remark}
The right hand side of equation \eref{eqn:Marchder} 
is called the Marchaud  fractional 
derivative. Lemma \ref{lem:Dalpop} actually holds true
for a larger class of functions $f$, namely those such that $f = \CI^{\alpha}g$ for
some $g \in L^1([0,1],\R)$.
\end{remark}

\begin{proof}[of Proposition~\ref{prop:Holdertrick}]
The main idea behind this proof, and a recurrent theme 
in this section is the following. Although the stochastic control $v(\cdot)$
vanishes outside $[0,1]$, the integrand  $\tilde v(s)$
in non-zero for all $s \in [0,\infty)$. Thus the square integrability
of $\tilde v(s)$ on the interval $[0,\infty)$ is shown 
by dealing with the singularities of the integrand differently at $s = 0$,  and $s = \infty$.
At $s =0$, we use the fact that $v$ is H\"older continuous whereas  at $s = \infty$,
we use the fact that $v$ is bounded.

By definition,
\begin{equ}
\tilde v(s) = \frac{d}{ds}\int_{0}^{s} \,(s-r)^{\frac12 - H} v(r)\,dr\;.
\end{equ}
Applying Lemma \ref{lem:Dalpop} with $\alpha =H- \frac12$, we obtain the bound
\begin{equ}
|\tilde v(s)| 
\leq M s^{\frac12 - H} |v(s)| + M \int_{0}^s
(s -r )^{-\frac12 - H } \big| v(r) - v(s)\big| \,dr \;,
\end{equ}
for some constant $M>0$.
Noting that the  control 
$v(\cdot)$ vanishes outside the interval $[0,1]$, we have
\begin{equs}
\tE\Big( \int_{0}^{\infty} |\tilde v(s)|^2 ds \Big)
&\leq M \tE \|v\|_{\gamma}^2 \int_{0}^{1} s^{1 -2 H}\,ds \label{eqn:integsmall} \\
&\quad + 
M\tE \int_{0}^{\infty}\Big( \int_{0}^s
(s -r )^{-\frac12 - H } \big| v(r) - v(s)\big| \,dr \Big)^2 ds \;.
\end{equs}
The first  integral 
$\int_{0}^{1} s^{1 -2 H}\,ds$ in equation \eref{eqn:integsmall} is clearly finite since
$H < 1$, so that this term is bounded by $\tp$ by Lemma~\ref{lem:vcondhold}. Now we show that the 
second term has a similar bound. To this end, set
\begin{equ}
\zeta(s) \equiv  \int_{0}^s
(s -r )^{-\frac12 - H } \big| v(r) - v(s)\big| \,dr \;.
\end{equ}
For $s<1$, we obtain
\begin{equs}
|\zeta(s)| &= \int_{0}^s
(s -r )^{-\frac12 - H } \big| v(r) - v(s)\big| \,dr \leq M \|v\|_{\gamma}\int_{0}^s
\,s ^{-\frac12 - H + \gamma}ds\\
&\leq M \|v\|_{\gamma}\,s ^{\frac12 - H + \gamma} \;,
\end{equs}
so that the required bound follows again from Lemma~\ref{lem:vcondhold}.
For  $s  \ge 1$, we observe that $\zeta(s)$ has a   square integrable singularity, and thus
the boundedness of the control $v(\cdot)$ will be sufficient. Indeed it follows from the definition of $h$
\eref{eqn:hfun} that
\begin{equ}
|\zeta(s)| \leq  \int_{0}^{3/4}
(s -3/4)^{-\frac12 - H } \big| v(r) - v(s)\big| \,dr
 \leq  M \|v\|_{\gamma}
(s -3/4)^{-\frac12 - H } \;.
\end{equ}
Since this expression is square integrable, it follows that
$\tE \int_{0}^{\infty} |\zeta^2(s)| ds < \tp$, and the claim follows from  \eref{eqn:integsmall}.
\end{proof}

We conclude with the corresponding bound on the Malliavin derivatives of the control $v$:

\begin{proposition} \label{prop:sectermthm}Let  $v$ be as defined
in \eref{eqn:stochcont}, the operator $\mathcal{D}$ as defined in \eref{e:frac} and $\D$ be the Malliavin derivative.  Then
following estimate holds:
\begin{equ}[e:bound]
\tE\Big( \int_{0}^{\infty} \int_{0}^{\infty}
\|\D_t\tilde v(s)\|^2\,ds\,dt \Big) <  \tp\;.
\end{equ}
\end{proposition}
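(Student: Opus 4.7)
The plan is to mimic the proof of Proposition~\ref{prop:Holdertrick} applied, for each fixed $t$, to the random function $s\mapsto\D_t v(s)$, and then integrate in $t$. Since the fractional derivative $\CD^{H-{1\over 2}}$ is a deterministic linear operator acting only on the $s$-variable, it commutes with the Malliavin derivative $\D_t$, so
\[
\D_t\tilde v(s) \;=\; \CD^{H-{1\over 2}}_s\bigl(\D_t v(\cdot)\bigr)(s)\;.
\]
Note also that $\D_t v \equiv 0$ for $t > 1$ because $v$ is supported in $[0,1]$ and depends on $W$ only through $W|_{[0,1]}$, so the $t$-integral in \eref{e:bound} is effectively over $[0,1]$. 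Once I have a conditional H\"older bound of the form $\tE\|\D_t v\|_\gamma^2 < \tp$ uniformly in $t\in[0,1]$, the remainder of the argument will essentially copy Proposition~\ref{prop:Holdertrick} verbatim after applying Fubini.

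To obtain the H\"older bound, I differentiate \eref{eqn:stochcont}: with $\eta = C_1^{-1}\xi$ we have $v(s) = h(s)\,V(X_s)^*(J_{0,s}^{-1})^*\eta$, and the Leibniz rule gives
\[
\D_t v(s) \;=\; h(s)\,\D_t\!\bigl[V(X_s)^*(J_{0,s}^{-1})^*\bigr]\,\eta \;-\; h(s)\,V(X_s)^*(J_{0,s}^{-1})^*\,C_1^{-1}(\D_t C_1)\,\eta\;.
\]
Each factor is controllable by tools already in the paper. The Malliavin derivatives $\D_t X_s$ and $\D_t J_{0,s}^{-1}$ are given by fractional integrals of order $H-{1\over 2}$ against Jacobian-valued processes, in complete analogy with \eref{e:MalliavinDer}; combined with Lemma~\ref{lem:Jmom} (and the remark following Corollary~\ref{cor:secondvariation} for the second variation), one gets conditional moment bounds on their $\gamma$-H\"older norms in $s$ that are $< \tp$, uniformly in $t\in[0,1]$. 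The increment $\D_t C_1$ is an integral that is quadratic in $\A^*\xi$ and its $\D_t$-derivative and is therefore controlled by the same estimates, while Theorem~\ref{thm:mallmatest} furnishes $\tE\|C_1^{-1}\|^p < \tp$ for all $p\ge 1$. An application of Cauchy--Schwarz then yields $\tE\|\D_t v\|_\gamma^2 < \tp$ uniformly in $t\in[0,1]$.

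With this estimate in hand, I apply Lemma~\ref{lem:Dalpop} to $r\mapsto\D_t v(r)$, obtaining
\[
|\D_t\tilde v(s)| \;\le\; M\,s^{{1\over 2}-H}|\D_t v(s)| + M\int_0^s (s-r)^{-{1\over 2}-H}\bigl|\D_t v(r)-\D_t v(s)\bigr|\,dr\;.
\]
Squaring, integrating over $s\in(0,\infty)$ and $t\in[0,1]$ and swapping integrals via Fubini, I split $s\in[0,1]$ and $s\in[1,\infty)$ exactly as in the proof of Proposition~\ref{prop:Holdertrick}. On $[0,1]$, the $\gamma$-H\"older control majorises the Marchaud integrand by $\|\D_t v\|_\gamma (s-r)^{\gamma-{1\over 2}-H}$, whose square is integrable near zero. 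On $[1,\infty)$, the cut-off $h$ forces $\D_t v$ to vanish on $[3/4,\infty)$, leaving a tail controlled by $\|\D_t v\|_\infty(s-3/4)^{-{1\over 2}-H}$, which is square-integrable at infinity.

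The main obstacle I anticipate is the bookkeeping in the second paragraph: namely, obtaining conditional $L^p$ bounds on the $\gamma$-H\"older norms of $s\mapsto\D_t X_s$ and $s\mapsto\D_t J_{0,s}^{-1}$ that are uniform in $t\in[0,1]$. The $s$-derivative of the representation \eref{e:MalliavinDer} introduces a singular kernel of order $H-{3\over 2}$, but since the integrand $J_{\cdot,t}V(X_\cdot)$ is itself $\gamma$-H\"older with $\gamma > {1\over 2}$, the fractional integral of order $H-{1\over 2}$ actually \emph{smooths} rather than roughens, and the moment estimates follow from Lemma~\ref{lem:Jmom} combined with the standard mapping properties of $\CI^{H-{1\over 2}}$ on H\"older spaces. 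Once that is established, the remaining steps are essentially routine.
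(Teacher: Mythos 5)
Your structural outline is correct and matches the paper: you commute $\D_t$ with $\CD^{H-1/2}$, note that $\D_t v \equiv 0$ for $t>1$, reduce via Lemma~\ref{lem:Dalpop} to the Marchaud form, decompose $\D_t v$ by the Leibniz rule into the terms involving $\D_t(V(X_s)^*)$, $\D_t(J_{0,s}^{-1})^*$, and $\D_t(C_1^{-1})$, and estimate them via the explicit representation of the Malliavin derivative. However, there is a genuine gap in the key claim on which your ``copy Proposition~\ref{prop:Holdertrick} verbatim'' step rests.

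You assert that $\tE\|\D_t v\|_\gamma^2 < \tp$ \emph{uniformly in $t\in[0,1]$}, and you dismiss your own anticipated obstacle by arguing that the fractional integral of order $H-\tfrac12$ ``smooths rather than roughens''. This is false: for any fixed $t\in(0,1)$, the $\gamma$-H\"older norm of $s\mapsto\D_t v(s)$ is actually \emph{infinite}. To see why, look at the representation \eref{e:MalliavinDer} (or \eref{eqn:Dtys}): $\D_t Y_s = c_H\int_t^s \CJ_{a,s}\,U(Y_a)\,(a-t)^{H-3/2}\,da$. Since $\CJ_{t,s}U(Y_t)\neq 0$ in general, the leading behaviour as $s\downarrow t$ is
\begin{equ}
\D_t Y_s \;\approx\; \tilde c_H\, \CJ_{t,s}\,U(Y_t)\,(s-t)^{H-1/2}\;,
\end{equ}
which is H\"older of order $H-\tfrac12$ at $s=t$. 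Because $H-\tfrac12 < \tfrac12 < \gamma$, the function $s\mapsto\D_t v(s)$ fails to be $\gamma$-H\"older near $s=t$. The H\"older continuity of the integrand $a\mapsto\CJ_{a,s}U(Y_a)$ only improves the error term $(s-t)^{H+\gamma-1/2}$, not the leading $(s-t)^{H-1/2}$ contribution; anchoring the fractional integral at $t$ rather than at $0$ is exactly what roughens things here. Consequently the Marchaud estimate in your final paragraph, which relies on majorising the integrand by $\|\D_t v\|_\gamma\,(s-r)^{\gamma-1/2-H}$, does not go through: the prefactor is infinite.

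The paper circumvents this by replacing your uniform H\"older bound with the weaker, $t$-dependent modulus estimate \eref{e:boundHolder}, namely $|\D_t v(s)-\D_t v(r)| \le K\,|s-r|^\gamma\,|r-t|^{-\alpha}$ with $\alpha = \tfrac32 - H$ and $\tE K^2 < \tp$. This extra singular factor $|r-t|^{-\alpha}$ encodes precisely the loss of regularity at $s=t$ just identified. One then verifies directly that the resulting double integral $\int_0^1\!\int_0^1 (s-t)^{1+2\gamma-2H-2\alpha}\,dt\,ds = \int_0^1\!\int_0^1 (s-t)^{2\gamma-2}\,dt\,ds$ is finite precisely because $\gamma>\tfrac12$. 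The condition $\alpha < 1+\gamma-H$, i.e.\ $\gamma > \tfrac12$, is the exact margin that saves the argument, and it is not a ``bookkeeping'' issue but the crux of the proof. You would need to establish \eref{e:boundHolder} (via the splitting $\D_t Y_s - \D_t Y_r = \int_r^s\ldots + \int_t^r(\CJ_{a,s}-\CJ_{a,r})\ldots$ as in the paper) before the remainder of the Marchaud computation can be carried out.
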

\begin{proof} The main difficulty  here is to obtain control over the 
modulus of continuity of the  quantity
$\D_s v(t)$ for all $s,t \in [0,1]$.
On route to establishing the H\"older continuity of the Malliavin derivative applied to
the stochastic control, we need to perform several
careful estimates using the fractional derivatives. 

First notice that, since $\tilde v(s) \in \mathcal{F}_1$ for every $s>0$ 
($\mathcal{F}_t, t \geq 0$ denotes the filtration generated by the increments of $W$), 
we have $\D_t\tilde  v(s)\big) = 0$ for $t > 1$, so that the integrand in \eref{e:bound} vanishes outside the set $[0,1]\times[0,1]$. 
Furthermore, it is straightforward to check that $\D_t$ and $\CD^{H-{1\over 2}}$ commute, so that 
\begin{equ}
\tE\Big( \int_{0}^{\infty} \int_{0}^{\infty}
\|\D_t\tilde v(s)\|^2\,ds\,dt \Big) 
 \leq \tE\Big( \int_{0}^{1} \int_{0}^{1}
\|\big(\CD^{H- \frac12} \D_tv\big)(s) \|^2 \,dt\,ds\Big)\;.
\end{equ}
By Lemma \ref{lem:Dalpop}, the claim follows if we can show that 
\begin{equ}
\tE\Big( \int_{0}^{1} \int_{0}^{1}
\Big|\int_{0}^s (s -r)^{-\frac{1}{2} - H} \big(\D_tv(s) - \D_tv(r)\big ) \,dr  \Big|^2 \,dt ds\Big)
< \tp\;. 
\label{eqn:fracfin}
\end{equ}
Expanding the integrand, we obtain
\begin{equs}
\Big| \D_t\Big(\int_{0}^s (s -r)^{-\frac{1}{2} - H} &\big(v(s) - v(r)\big ) \,dr \Big) \Big|
\leq  
 \int_{0}^t (s -r)^{-\frac{1}{2}- H} |\D_t v(s)|  \,dr   \\
&  \quad + \Big| \D_t\Big(\int_{t}^s (s -r)^{-\frac{1}{2} - H} \big(v(s) - v(r)\big ) \,dr \Big) \Big|\\
&\leq |s ^{\frac{1}{2} - H} - (s-t)^{\frac{1}{2}- H}|\,\|\D_t v\|_{0,1,\infty}  \\
&\quad + \Big| \int_{t}^s (s -r)^{-\frac{1}{2} - H} \D_t\big(v(s) - v(r)\big ) \,dr\Big| \;.  \label{eqn:mallfracint}
\end{equs}
Since the double integral of $|s ^{\frac{1}{2} - H} - (s-t)^{\frac{1}{2}- H}|^2$ is finite, the first term will be bounded
by $\tp$ if we can show that $\tE \sup_{t \in [0,1]} \|\D_t v\|_{0,1,\infty}^2 < \tp$.
Since this bound will be a byproduct of the analysis of the second term, we postpone it to \eref{eqn:supDtys} below.

The second term of  equation \eref{eqn:mallfracint} needs more delicate calculations.
We will show that there exists a random variable $K$ and an exponent $\alpha < 1+\gamma-H$
such that the following bounds hold:
\begin{equ}[e:boundHolder]
|\D_t v(s) - \D_t v(r)| \leq K |s-r|^\gamma |r-t|^{-\alpha} \;,\quad\text{with}\quad \tE\,K^2 < \tp\;.
\end{equ}
Assuming these bounds, we proceed similarly to the proof of Proposition~\ref{prop:Holdertrick}:
\begin{equs}
\tE\Big(\int_{0}^{1} \int_{0}^{1} 
&\Big| \int_{t}^s (s -r)^{-\frac{1}{2} - H}\D_t\big(v(s) - v(r)\big ) \,dr  \Big|^2 \, dt\, ds \Big) \\
&\leq \tE\Big( K^2 \int_{0}^{1} \int_{0}^{1} \Big| \int_{t}^s (s -r)^{-\frac{1}{2} - H + \gamma} (r-t)^{-\alpha}
dr \Big|^2 \, dt\, ds \Big) \\
&\leq \tE\Big( K^2 \int_{0}^{1} \int_{0}^{1} (s-t)^{1 +2\gamma - 2H - 2\alpha} \, dt\, ds \Big) 
< \tp\;,
\end{equs}
where we used the bound on $\alpha$ to obtain the last inequality. It therefore remains to show \eref{e:boundHolder}.

The Malliavin derivative of $v$ is given by
\begin{equ}
\D_t v(s) = h(s)\, \D_t  \Big(V(X_s)^*(J_{0,s}^{-1})^* (\A h \A^*)^{-1}\xi \Big)\;.
\end{equ}
Applying the product rule, we break this into a sum of three terms:
\begin{equs} 
\D_t v(s) &= h(s)\Big[\, \D_t  \Big(V(X_s)^*\Big)(J_{0,s}^{-1})^* (\A h \A^*)^{-1}\xi  +
V(X_s)^*\D_t  \Big((J_{0,s}^{-1})^*\Big) (\A h \A^*)^{-1}\xi  \\
&\hspace{3cm} +V(X_s)^*(J_{0,s}^{-1})^*\D_t  \big( (\A h \A^*)^{-1}\xi\big)\Big]\\
&\eqdef \I_1 + \I_2 + \I_3\;. \label{eqn:Mallsc}
\end{equs}

 
To make the proof compact,
 we introduce the stochastic process $Y_s$ given by collecting the various objects appearing in this expression:
\begin{equs}
Y_s = \big(X_s, V(X_s), (J_{0,s}^{-1})^*\big) \;.
\label{eqn:yt}
\end{equs}
It the follows from \eref{e:SDEmain} and the expression \eref{eqn:sflow} for the Jacobian $J_{0,t}$ that the components of $Y_s$ 
solve equations of the form \eref{e:nuhubounds}.
One can check that both the Jacobian $\mathcal{J}_{0,s}$ for $Y$ and its inverse $\CJ_{0,s}^{-1}$ then satisfy equations 
of the type \eref{e:zhat}. So by Corollary \ref{cor:secondvariation}, we obtain
\begin{equ}[e:boundsYJ]
\|Y\|_{\gamma} + \|\CJ_{0,\cdot}\|_{\gamma}+ \|\CJ_{0,\cdot}^{-1}\|_{\gamma} \le K(x_0) \exp\bigl(M \|\tilde B + \CG \omega\|_{\gamma}^{1/\gamma}\bigr)\;,
\end{equ}
for some constant $M$ and some continuous function $K$.

Furthermore, just as in \eref{e:MalliavinDer}, the Malliavin derivative of $Y$ is given by
\begin{equs}
\D_t Y_s &= c_H \int_{t}^s \mathcal{J}_{a,s} \,U(Y_a)\, (a -t)^{H-3/2} \,da \, 1_{t \leq s}\;,
\label{eqn:Dtys}
\end{equs}
for some smooth function $U$ which grows linearly at infinity.

We now tackle the bounds on the two very similar terms $\I_1$, $\I_2$. 
Since $t^{H-{3\over 2}}$ is integrable at $0$ and since $U$ grows linearly at infinity, it follows immediately that
\begin{equ}[eqn:supDtys]
\sup_{s,t \in [0,1]} |\D_t Y_s| \le M \|\CJ_{0,\cdot}\|_{\gamma}\|\CJ^{-1}_{0,\cdot}\|_{\gamma} \|Y_{\cdot}\|_{\gamma}\;.
\end{equ}
Thus by \eref{e:boundsYJ} we have $\tE \sup_{t \in [0,1]} \|\D_t v\|_{0,1,\infty}^2 < \tp$.
Regarding its modulus of continuity, note that 
\begin{equ}
\D_t Y_s - \D_t Y_r = \int_r^s \CJ_{a,s} U(Y_a) (a-t)^{H-{3\over 2}}\,da + \int_t^r \bigl(\CJ_{a,s}-\CJ_{a,r}\bigr) U(Y_a) (a-t)^{H-{3\over 2}}\,da\;.
\end{equ}
The first term is bounded by
\begin{equ}
M|r-t|^{H-{3\over 2}} |r-s| \|\CJ_{0,\cdot}\|_{\gamma}\|\CJ^{-1}_{0,\cdot}\|_{\gamma} \|Y_{\cdot}\|_{\gamma}\;,
\end{equ}
and the second term is bounded by
\begin{equ}
M |r-s|^\gamma \|\CJ_{0,\cdot}\|_{\gamma}\|\CJ^{-1}_{0,\cdot}\|_{\gamma} \|Y_{\cdot}\|_\gamma\;,
\end{equ}
so that there does exist a constant $M$ such that
\begin{equ}
|\D_t Y_s - \D_t Y_r | \le M|r-t|^{H-{3\over 2}} |r-s| \|\CJ_{0,\cdot}\|_{\gamma}\|\CJ^{-1}_{0,\cdot}\|_{\gamma} \|Y_{\cdot}\|_\gamma\;,
\end{equ}
uniformly over $t \le r\le s \le 1$.
Since $\gamma > {1\over 2}$, we do indeed have ${3\over 2} - H < 1 + \gamma - H$, so that this bound
is of the type \eref{e:boundHolder} with $\tE K^p < \tp$ for every $p>0$.

Notice furthemore that by Theorem \ref{thm:mallmatest}, 
the matrix $C_1 =\A h \A^* \in \mathbb{R}^{n^2}$ is almost surely
invertible and $\tE|\A h \A^*|^{-p} < \tp$ for all $p \geq 1$.
Combining this with \eref{e:boundsYJ} and the expressions
for $\CI_1$ and $\CI_2$, we conclude that \eref{e:boundHolder} does indeed hold for these terms.

Let us now turn to the term $\CI_3$.
Recall that the Fr\'echet  derivative of the inverse an  $n \times n$ matrix $ A $ in
the direction $H$ is given by $ D^HA^{-1} = -A^{-1} H A^{-1}$.
It thus follows from the chain rule that
\begin{equs}
\D_t  \big( (\A h \A^*)^{-1}\xi\big)  = -(\A h \A^*)^{-1} \,\bigl(\D_t   (\A h \A^*)\bigr)\, (\A h \A^*)^{-1}
\xi
\end{equs}
Thus by Cauchy-Schwartz we have,
\begin{equs}
\Big(\tE\, \left \|\D_t  \big( (\A h \A^*)^{-1}\xi\big) \right \|^p_{0,1,\infty}\Big)^2 &\leq 
\tE| (\A h \A^*)^{-1}|^{4p} \,\tE\,\|\D_t  \, (\A h \A^*) \|^{2p}_{0,1,\infty} |\xi|^2\;.
\end{equs}
By the product rule and the definition of $\A$,
\begin{equ}
\D_t  (\A h \A^*) = \int_{0}^{1} \big(\D_t J_{0,s}^{-1}\big)\,V(X_s) \,h(s)\,V^*(X_s) \,(J_{0,s}^{-1})^*\,ds
+ \ldots \;, \label{eqn:3otherts}
\end{equ}
where ``$\ldots$'' stands for the remaining three terms that are very similar. 
Thus from \eref{eqn:supDtys} it follows that
\begin{equs}
\tE\Big |\int_{0}^{1} \big(\D_t J_{0,s}^{-1}\big)\,&V(X_s) \,h(s)\,V^*(X_s) \,(J_{0,s}^{-1})^*\,ds \Big |^p \\
 &\le M\tE
\|\mathcal{J}_{0,\cdot}\|^p_{\gamma}\,\|\mathcal{J}^{-1}_{0,\cdot}\|^p_{\gamma} \,\|Y_\cdot\|^{2p}_{\gamma}
\, \|V(X_\cdot)\|^{2p}_{\gamma} < \tp.
\end{equs}
Since $J^{-1}$ and $V(X)$ play indistinguishable roles in \eref{eqn:yt} and \eref{e:boundsYJ}, the bounds on the 
remaining three terms follow in an identical fashion, so that
\begin{equ}
\E \sup_{t \in [0,1]} \bigl|\D_t  \big( (\A h \A^*)^{-1}\xi\big)\bigr|^p < \tp\;.
\end{equ}
Combining this with \eref{e:boundsYJ} and the definition of $\CI_3$, we finally conclude that 
 \eref{e:boundHolder} also holds for the term $\CI_3$ (this time with $\alpha = 0$), thus concluding the proof.
\end{proof}

\appendix
\section{Some technical lemmas}

We give the proofs of  some technical lemmas in this appendix. First, we show that Lemma~\ref{lem:divct}
holds, which we reformulate here:

\begin{lemma} 
Let $\omega  \in \CW$. Then, on
any time interval bounded away from $0$, the map
$t \mapsto \CG \omega(t)$  is $\CC^\infty$.
Furthermore, if we set $f_\omega (t) = t {d\over dt} \CG \omega(t)$, then we have $f_\omega(0) = 0$ and for every $T>0$, there exists
a constant $M_T$ such that 
$\|f_\omega \|_{0,T,\gamma} < M_T\|\omega\|_{(\gamma,\delta)}$.
\end{lemma}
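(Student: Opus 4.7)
My plan is to derive an explicit integral representation of $f_\omega(t)$ via differentiation under the integral sign and a change of variables, and then exploit the regularity of the kernel $g$ together with the H\"older-type growth bound on $\omega$ to conclude. For the smoothness claim, I would fix a compact interval $[a,b] \subset (0,\infty)$ and differentiate $\CG\omega$ under the integral sign in \eref{eqn:opera}: writing $g^{(k)}(t/r)$, the integrand is dominated uniformly in $t \in [a,b]$ by an integrable function of $r$ after splitting into the regimes $r \ll a$ (where $t/r$ is large) and $r \gg b$ (where $t/r$ is small), using the pointwise bound $|\omega(-r)| \le \|\omega\|_{(\gamma,\delta)} r^\gamma (1+r)^\delta$.

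For the quantitative estimate, I would apply the substitution $s = t/r$ in $f_\omega(t) = \gamma_H \int_0^\infty t\, r^{-2} g'(t/r) \omega(-r)\,dr$ to arrive at the clean representation
\begin{equ}[e:planrep]
f_\omega(t) = \gamma_H \int_0^\infty g'(s)\, \omega(-t/s)\, ds\;.
\end{equ}
The crucial analytic input is then that $g'$ is bounded as $s \to 0^+$ and decays like $O(s^{H-3/2})$ as $s \to \infty$. The boundedness at the origin is not manifest from \eref{eqn:gfun} since each of the two summands defining $g$ individually blows up at $x=0$; however, using the algebraic identity $x(u+x)^{H-5/2} = (u+x)^{H-3/2} - u(u+x)^{H-5/2}$ inside the integral and evaluating the resulting elementary integral $\int_0^1 (u+x)^{H-3/2}\,du$ produces an exact cancellation with the $x^{H-1/2}$ term, leaving $g$ manifestly regular at $x=0$ with linear leading behaviour. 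The decay as $x \to \infty$ is obtained by a direct expansion.

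With these asymptotics in hand, $f_\omega(0) = 0$ follows from \eref{e:planrep} by dominated convergence, using $\omega(0)=0$ and the bound $|g'(s)\omega(-t/s)| \le \|\omega\|_{(\gamma,\delta)} |g'(s)| (t/s)^\gamma (1+t/s)^\delta$; integrability near $s=0$ requires $\gamma + \delta < 1$, while integrability near $s=\infty$ uses the decay of $g'$ together with $\gamma > 1/2 > H - 1/2$, all of which hold under our standing assumptions. For the H\"older bound at $0 \le t_1 \le t_2 \le T$, the inequality
\begin{equ}
|\omega(-t_2/s) - \omega(-t_1/s)| \le \|\omega\|_{(\gamma,\delta)} |t_2-t_1|^\gamma s^{-\gamma}\bigl(1+(t_1+t_2)/s\bigr)^\delta
\end{equ}
combined with \eref{e:planrep} reduces the claim to the finiteness of $\int_0^\infty |g'(s)|\, s^{-\gamma}(1+T/s)^\delta\,ds$, which again follows from the same asymptotics on $g'$.

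The main obstacle is the regularity analysis of $g$ at the origin: naively, both terms in \eref{eqn:gfun} diverge as $x \to 0^+$, and one has to make the cancellation between them explicit in order to conclude that $g'$ is bounded there. Once this cancellation is exposed through the algebraic rewriting described above, the remaining steps are routine integration estimates.
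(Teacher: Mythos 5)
Your overall strategy --- differentiating under the integral and changing variables to obtain $f_\omega(t) = \gamma_H \int_0^\infty g'(s)\,\omega(-t/s)\,ds$, establishing that $g'$ is bounded near $0$ and decays like $s^{H-3/2}$ at infinity, and then feeding the growth bound $|\omega(-r)| \le \|\omega\|_{(\gamma,\delta)} r^\gamma(1+r)^\delta$ into the integral --- is essentially the same as the paper's, which works with the equivalent kernel $\tilde g(x) = x\,g'(x)$ in the original variables and invokes the estimates of Hair--Ohashi (Lemma~A.1 and Prop.~A.2 of \cite{Hair:Ohas:07}) rather than redoing the integral estimates by hand. The exponent conditions you invoke ($\gamma + \delta < 1$ near $s=0$ and $\gamma > 1/2 > H - 1/2$ near $s = \infty$) are the right ones and match the standing hypotheses.

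There is, however, a genuine gap in your treatment of $g'$ near the origin. First, the premise is off: since $H-1/2 > 0$, the summand $x^{H-1/2}$ tends to $0$ as $x \to 0$, and so does the second summand (it equals $-x^{H-1/2} + O(x)$); neither ``blows up''. What blows up are their \emph{derivatives}, and it is the $x^{H-1/2}$ coefficients that must cancel. Second and more importantly, the claimed cancellation mechanism does not work as described: after applying $x(u+x)^{H-5/2} = (u+x)^{H-3/2} - u(u+x)^{H-5/2}$, the integral you obtain is $\int_0^1 \frac{(u+x)^{H-3/2}}{(1-u)^{H-1/2}}\,du$, which is \emph{not} elementary because the $(1-u)^{H-1/2}$ factor is still present. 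Even if one drops that factor, $\int_0^1(u+x)^{H-3/2}\,du = \frac{(1+x)^{H-1/2}-x^{H-1/2}}{H-1/2}$, and multiplying by $(H-3/2)$ yields the coefficient $-\frac{H-3/2}{H-1/2}$ in front of $x^{H-1/2}$, which does \emph{not} by itself cancel the leading $+1$: the term $\int_0^1 u(u+x)^{H-5/2}\,du$ also carries an $x^{H-1/2}$ contribution, and only the combination of all three produces the cancellation --- at which point the split has gained nothing, since recombining the two pieces simply returns $x\int_0^1(u+x)^{H-5/2}\,du$. A decomposition that actually isolates an elementary cancellation is to split the \emph{denominator}, writing $(1-u)^{1/2-H} = 1 + \bigl[(1-u)^{1/2-H}-1\bigr]$: the first piece gives $x\int_0^1(u+x)^{H-5/2}\,du = \frac{x(1+x)^{H-3/2} - x^{H-1/2}}{H-3/2}$, whose $-x^{H-1/2}/(H-3/2)$ cancels exactly after multiplying by $(H-3/2)$, while the correction piece is $O(x)$ because $(1-u)^{1/2-H}-1 = O(u)$ near $u=0$ makes that integral bounded as $x\to 0$. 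The paper avoids the issue entirely by writing $t g'(t) = g(t) + M\,t^2\int_0^1\frac{(u+t)^{H-7/2}}{(1-u)^{H-1/2}}\,du$ and bounding the extra integral through a chain of pointwise inequalities, using the already-established asymptotics of $g$ itself.
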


\begin{proof}
The proof follows from the calculations done in
\cite{Hair:Ohas:07}.  Since the function $g(\cdot)$ defined  in \eref{eqn:gfun}
is smooth everywhere except at $0$, it follows from \eref{eqn:opera} that 
$m_t$ is smooth on any interval bounded away from $0$. Now
a simple change of variables in \eref{eqn:opera} yields
\begin{equ} [eqn:tctform]
t \,{dm_{t} \over dt}  = \gamma_H\int_0^{\infty} {1\over r }t\,g'(t)\omega\Bigl(-{t \over r}\Bigr)\,dr\;.
\end{equ}
Since $g'(t)$ is smooth everywhere except at $0$, 
for the map $ t \mapsto t\, {dm \over dt}(t)$ to have finite $\gamma$-H\"older norm
on $[0,T]$,
by \eref{eqn:tctform} we need to only check the regularity of $t \,g'(t)$
at $0$ and at infinity.  Indeed, if we show that  the function  $t \, g'(t)$ behaves like
$\mathcal{O}(t)$ for $t \ll 1$, and $\mathcal{O}(t^{H-1/2})$ for $t \gg 1$,
the conclusion follows from Lemma A.1 and Proposition A.2 of 
\cite{Hair:Ohas:07}. We have,
\begin{equs}[eqn:tgt]
t \,g'(t) &= t^{H-1/2} + (H - 3/2)\,t\,\int_0^1 {(u+t)^{H-5/2}\over
(1-u)^{H-1/2}}\,du +  M\,t^2\,\int_0^1 {(u+t)^{H-7/2}\over
(1-u)^{H-1/2}}\,du \\
&=g(t) + M\,t^2\,\int_0^1 {(u+t)^{H-7/2}\over
(1-u)^{H-1/2}}\,du 
\end{equs}
for a  constant $M$.  Notice that by Lemma A.1 of 
\cite{Hair:Ohas:07} we have  that $g(t)$ behaves like
$\mathcal{O}(t)$ for $t \ll 1$ and $\mathcal{O}(t^{H-1/2})$ for $t \gg 1$.
It can be seen easily that for $ t \gg 1$, the term
$ t^2\,\int_0^1 {(u+t)^{H-7/2}\over
(1-u)^{H-1/2}}\,du$ behaves like $\mathcal{O}(t^{H - 3/2})$. 
For $t \ll 1$, we have 
\begin{equs}
 t^2\,\int_0^1 {(u+t)^{H-7/2}\over
(1-u)^{H-1/2}}\,du &<  t\,\int_0^1{(u+t)^{H-5/2}\over
(1-u)^{H-1/2}}\,du \\
&<  t\,\int_0^1 u^{H-5/2} \Big(1 -
(1-u)^{1/2 - H}\Big)\,du + M(t)\, \label{eqn:tgtf2}
\end{equs}
where $M(t)$ is $\mathcal{O}(t)$ for $t \ll 1$. Now since
$(1 -(1-u)^{1/2 - H})$ is $\mathcal{O}(u)$ for $u \approx 0$, and
diverges like $\mathcal{O}((1-u)^{1/2 - H})$ for $u \approx 1$, the integral in
\eref{eqn:tgtf2} is finite. Thus from \eref{eqn:tgt} it follows 
that $t g'(t)$ is $\mathcal{O}(t)$ for $t \ll 1$ and we are done. 
\end{proof}

Our next technical results yields a bound on the H\"older constant of a function
defined on a large interval as a function of the H\"older constants of its restriction to
smaller subintervals:

\begin{lemma}\label{lem:holdgamadd}
For any $N \ge 1$ and any sequence of times $0 = u_0 < u_1<u_2 < \cdots <u_{N-1} < u_N = T$, 
the bound
\begin{equ}\label{eqn:holdbd02}
\|x\|_{0,T,\gamma} \leq  N^{1- \gamma} \, \max_{0 \leq i \leq N-1} \|x\|_{u_i,u_{i+1},\gamma} 
\end{equ}
holds.
\end{lemma}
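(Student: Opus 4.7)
The plan is as follows. Fix $s,t \in [0,T]$ with $s<t$, and let $j \le k$ be the indices with $s \in [u_j, u_{j+1}]$ and $t \in [u_k, u_{k+1}]$. The natural idea is to decompose
\begin{equ}
x(t) - x(s) = \bigl(x(u_{j+1})-x(s)\bigr) + \sum_{i=j+1}^{k-1}\bigl(x(u_{i+1})-x(u_i)\bigr) + \bigl(x(t)-x(u_k)\bigr)\;,
\end{equ}
so that every increment on the right is an increment over a subinterval contained in one of the pieces $[u_i,u_{i+1}]$ on which we control the $\gamma$-H\"older seminorm. Denoting by $M$ the maximum on the right hand side of \eref{eqn:holdbd02}, each term is bounded by $M \ell_p^\gamma$, where $\ell_0,\ldots,\ell_{N'}$ are the lengths of these $N'+1$ subintervals (with $N' \le N$), and by construction $\sum_p \ell_p = t-s$.

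The main (and only real) step is then to bound $\sum_p \ell_p^\gamma$ in terms of $(t-s)^\gamma$. Since the function $u \mapsto u^\gamma$ is concave for $\gamma \in (0,1)$, Jensen's inequality applied to the uniform probability measure on the $(N'+1)$ indices gives
\begin{equ}
\frac{1}{N'+1}\sum_{p=0}^{N'} \ell_p^\gamma \le \left(\frac{1}{N'+1}\sum_{p=0}^{N'}\ell_p\right)^{\!\gamma} = \left(\frac{t-s}{N'+1}\right)^{\!\gamma}\;,
\end{equ}
so that $\sum_p \ell_p^\gamma \le (N'+1)^{1-\gamma}(t-s)^\gamma \le N^{1-\gamma}(t-s)^\gamma$.

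Combining these two steps, $|x(t)-x(s)| \le M\, N^{1-\gamma}(t-s)^\gamma$, and taking the supremum over $s<t$ in $[0,T]$ yields \eref{eqn:holdbd02}. There is no real obstacle here: the only thing to be careful about is that when $j=k$ the decomposition collapses to a single term and the bound is trivial, and that the number of nontrivial pieces is at most $N$ regardless of where $s$ and $t$ fall, so the power $N^{1-\gamma}$ (rather than $(N+1)^{1-\gamma}$) can be retained by noting that endpoints lying exactly at partition points reduce the count.
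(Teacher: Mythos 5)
Your proof is correct and rests on the same two ingredients as the paper's: a telescoping decomposition of $x(t)-x(s)$ over consecutive partition intervals, followed by Jensen's inequality for the concave map $u\mapsto u^\gamma$. The only organizational difference is that you give a direct argument (handling an arbitrary pair $s<t$ at once and counting that at most $N$ nontrivial pieces arise), whereas the paper proceeds by induction on $N$, reducing to the case $s\in[0,u_1]$, $t\in[u_N,T]$ before applying the same concavity estimate; your direct route is arguably cleaner, provided one is careful, as you are, that the count of pieces is at most $N$ and not $N+1$. One cosmetic slip: you write ``$N'\le N$'' for the number of subintervals minus one, but the inequality $(N'+1)^{1-\gamma}\le N^{1-\gamma}$ you then use needs $N'+1\le N$, which is the bound you actually establish with the $k-j+1\le N$ count; stating it as $N'+1\le N$ from the start would avoid the apparent inconsistency.
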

\begin{proof}
We proceed by induction. For $N=1$, the bound \eqref{eqn:holdbd02} holds trivially.
Now assume \eqref{eqn:holdbd02} holds for some $N  \in \mathbb{N}$
and consider the sequence $0 = u_0 < u_1<u_2 < \cdots <u_{N-1} < u_N < 
u_{N+1} = T$.
 If the supremum $S\eqdef \sup_{s \neq t \in [0,T]} {|x(t)  - x(s)| \over |t-s|^\gamma}$
is attained either for $0 \leq s < t < u_{N}$ or for $u_1 \leq s < t < u_{N+1}$ then we can use the induction hypothesis
to conclude that
\begin{equs}
\|x\|_{0,T,\gamma} &= \|x\|_{0,u_N,\gamma} \vee \|x\|_{u_1,u_{N+1},\gamma} \\
&\leq  N^{1- \gamma} \, \max_{0 \leq i \leq N} \|x\|_{u_i,u_{i+1},\gamma}
\leq (N+1)^{1-\gamma}\, \max_{0 \leq i \leq {N}} \|x\|_{u_i,u_{i+1},\gamma}\;,
\end{equs} 
and we are done.
It remains to consider the case where
the supremum $S$ is attained for some $s \in [0,u_1]$ and $t \in [u_{N},T]$. 
We then have
\begin{equs}
S &\leq 
 {|x(u_1)  - x(s)| + |x(u_2)  - x(u_1)| + \cdots + |x(t)-x(u_{N})| \over |t-s|^\gamma}  \\
 &\leq 
   {|u_1-s|^\gamma \|x\|_{0,u_1,\gamma} \over |t-s|^\gamma} + {|u_2-u_1|^\gamma \|x\|_{u_1,u_2,\gamma} \over |t-s|^\gamma}
  + \cdots + {|t-u_N|^\gamma \|x\|_{u_{N},u_{N+1},\gamma} \over |t-s|^\gamma}\\
   &\leq \Big({|u_1-s|^\gamma \over |t-s|^\gamma} + {|u_2-u_1|^\gamma  \over |t-s|^\gamma} + 
   \cdots {|t-u_N|^\gamma  \over |t-s|^\gamma}\Big)\max_{0 \leq i \leq N} \|x\|_{u_i,u_{i+1},\gamma}\;.\label{e:boundS}
   \end{equs}
Since the function $x \mapsto x^\gamma$ is concave for $\gamma < 1$ and since $|u_1-s| + \ldots + |t-u_N| = |t-s|$, we have
\begin{equs}
{1 \over N+1} &\Big( {|u_1-s|^\gamma \over |t-s|^\gamma} + {|u_2-u_1|^\gamma  \over |t-s|^\gamma} + 
 \cdots {|t-u_N|^\gamma  \over |t-s|^\gamma}\Big)
 \leq \\ 
 &\Big( {1 \over N+1}{|u_1-s| \over |t-s|} + {1 \over N+1}{|u_2-u_1|  \over |t-s|} + 
   \cdots {1 \over N+1}{|t-u_N|  \over |t-s|}\Big)^{\gamma} = {1\over {(N+1)^\gamma}}.
 \end{equs}
Combining this with \eref{e:boundS}, we obtain
\begin{equ}
S \leq (N+1)^{1-\gamma}\, \max_{0 \leq i \leq {N}} \|x\|_{u_i,u_{i+1},\gamma}\;,
\end{equ}
thus concluding the proof.
\end{proof}

Finally, we provide the following interpolation result:

\begin{lemma} \label{lem:inter}
The bound
\begin{equs}
\|f\|_{0,T, \infty} \leq 2\max \Big(T^{-1/2}\|f\|_{L^2[0,T]} ,
\,\|f\|_{L^2[0,T]}^{{2\gamma \over 2\gamma +1 }} \,
\|f\|_{0,T,\gamma}^{{1 \over 2\gamma +1}} \Big)\;,
\end{equs}
holds for every $\gamma \in (0,1]$ and every $\gamma$-H\"older continuous function $f : [0, T ] \mapsto \R$. 
\end{lemma}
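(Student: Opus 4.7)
The plan is a standard interpolation argument based on localizing around a maximum of $|f|$ and using Hölder continuity to control how fast the function can drop off from its peak.

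First I would pick $t_* \in [0,T]$ with $|f(t_*)| = \|f\|_{0,T,\infty} =: M$, which exists by compactness. Set $C = \|f\|_{0,T,\gamma}$. The key observation is that if we can find an interval $[a,a+\delta] \subset [0,T]$ containing $t_*$, then for every $s \in [a,a+\delta]$ we have $|s - t_*| \le \delta$ and hence $|f(s)| \ge M - C\delta^\gamma$ by the Hölder bound. Whenever $C \delta^\gamma \le M/2$, this yields $|f(s)| \ge M/2$ on an interval of length $\delta$, so
\begin{equ}
\|f\|_{L^2[0,T]}^2 \ge \delta \cdot (M/2)^2\;.
\end{equ}

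Now I split into two cases. If $M \le 2CT^\gamma$, then $\delta = (M/(2C))^{1/\gamma}$ lies in $(0,T]$, so some translate $[a,a+\delta]$ around $t_*$ fits inside $[0,T]$, and the estimate above gives $\|f\|_{L^2}^2 \ge (M/(2C))^{1/\gamma} M^2/4$. Rearranging,
\begin{equ}
M^{2 + 1/\gamma} \le 2^{2+1/\gamma} C^{1/\gamma} \|f\|_{L^2}^2\;,
\end{equ}
which after taking the $(2+1/\gamma)$-th root yields exactly $M \le 2\,\|f\|_{L^2}^{2\gamma/(2\gamma+1)}\|f\|_{0,T,\gamma}^{1/(2\gamma+1)}$ (the constant $2$ arises precisely because $(2^{2+1/\gamma})^{\gamma/(2\gamma+1)} = 2$). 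In the opposite case $M > 2CT^\gamma$, we simply take $\delta = T$: then $|f(s)| \ge M - CT^\gamma \ge M/2$ on all of $[0,T]$, so $\|f\|_{L^2}^2 \ge TM^2/4$, giving $M \le 2T^{-1/2}\|f\|_{L^2}$.

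I do not anticipate any real obstacle here; the only thing that requires a little attention is checking that the constant $2$ drops out of the exponents cleanly and that in the first case one can legitimately place an interval of length $\delta$ inside $[0,T]$ containing $t_*$ (trivial since $\delta \le T$, by sliding the interval to the left or right as needed). Combining the two cases gives exactly the stated bound.
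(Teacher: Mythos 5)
Your proof is correct, but it takes a genuinely different (and arguably cleaner) route than the paper. You localize around a maximizer $t_*$ of $|f|$ and use Hölder continuity to show that $|f| \ge M/2$ on an interval of length $\delta = (M/(2C))^{1/\gamma}$ containing $t_*$ (or all of $[0,T]$ when that $\delta$ exceeds $T$), which directly produces the lower bound $\|f\|_{L^2}^2 \ge \delta M^2/4$; you then solve for $M$ in each regime. The paper instead works from the \emph{infimum} side: it first notes $f_{\min} \le T^{-1/2}\|f\|_{L^2}$, then applies Chebyshev's inequality to bound the Lebesgue measure of the superlevel set $\{|f| > a\}$ by $\|f\|_{L^2}^2/a^2$, combines this with the Hölder bound to get $\|f\|_\infty \le a + a^{-2\gamma}\|f\|_{0,T,\gamma}\|f\|_{L^2}^{2\gamma}$, and optimizes over $a$, splitting into cases according to whether the optimal $a$ lies above $f_{\min}$. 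Both are elementary interpolation arguments of comparable length and give exactly the same constant. Your version makes the geometric picture (a Hölder function cannot drop off too quickly from its peak, so a large peak forces a large $L^2$ norm) somewhat more transparent; the paper's Chebyshev formulation generalizes a bit more readily to $L^p$ with $p\neq 2$. The one small point you correctly flag and handle — that the case boundary $M \le 2CT^\gamma$ is exactly the condition $\delta \le T$, which permits fitting an interval of length $\delta$ around $t_*$ inside $[0,T]$ — is the only place requiring care, and you've addressed it.
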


\begin{proof}
Let $f_{\mn} = \inf_{t \in [0,T]} |f(t)|$, so that
$f_{\mn} \leq  \frac{\|f\|_{L^2[0,T]}}{\sqrt{T}}$.  
Therefore it follows from the definition of H\"older continuity that  
\begin{equ}
\|f\|_{0,T,\infty} \leq f_\mn +  \|f\|_{0,T,\gamma}\,T^{\gamma} \leq \frac{\|f\|_{L^2[0,T]}}{\sqrt{T}} + \|f\|_{0,T,\gamma}\,T^{\gamma}\;.
\label{eqn:holdeq}
\end{equ} 
For $a>0$ define the set
\begin{equs}
L_f^a = \{ x \in [0,T]; |f(x)| > a \}.
\end{equs}
By Chebyschev's inequality we have, for any $a >0$, $
\lambda(L_f^a) \leq { \|f\|^2_{L^2[0,T]} \over a^2 }$
where $\lambda$ denotes the Lebesgue measure.
Thus for any $a > f_{\mn}$ and $x \in L_f^a$, we have
\begin{equ}
|f(x)| \leq a + \|f\|_{0,T,\gamma}\, |\lambda(L_f^a)|^{\gamma}\;,
\end{equ}
so that
\begin{equ}[eqn:holdbd2]
\|f\|_{0,T,\infty} \leq a + a^{-2\gamma}\|f\|_{0,T,\gamma}\|f\|^{2\gamma}_{L^2[0,T]}\;.
\end{equ}
Now we combine the estimates obtained in \eref{eqn:holdeq} and \eref{eqn:holdbd2}.
 Making the choice $a = \|f\|^{{1 \over 2\gamma +1}}_{0,T,\gamma}\|f\|^{{2\gamma  \over 2\gamma +1 }}_{L^2[0,T]}$
 in \eref{eqn:holdbd2} yields the bound
\begin{equ}[eqn:holdbd3]
\|f\|_{0,T,\infty} \leq 2\|f\|^{{1 \over 2\gamma +1}}_{0,T,\gamma}\|f\|^{{2\gamma  \over 2\gamma +1 }}_{L^2[0,T]}\;,
\end{equ}
provided that we have $ \|f\|^{{1 \over 2\gamma +1}}_{0,T,\gamma}\|f\|^{{2\gamma  \over 2\gamma +1 }}_{L^2[0,T]} \geq
 \frac{\|f\|_{L^2[0,T]}}{\sqrt{T}} \geq
f_{\mn}$. This is the case when $ \|f\|_{0,T,\gamma} \geq  \|f\|_{L^2[0,T]}T^{-\gamma -{1\over 2}}$ .
When $ \|f\|_{0,T,\gamma} \leq  \|f\|_{L^2[0,T]}T^{-\gamma -{1\over 2}}$, from \eref{eqn:holdeq} 
we obtain
\begin{equ}[eqn:holdbd4]
\|f\|_{0,T,\infty} \leq 2\|f\|_{L^2[0,T]} T^{-1/2}\;,
\end{equ}
thus completing the proof.
\end{proof}

\endappendix

\bibliographystyle{Martin}
\bibliography{./fbm}
\end{document}